\newtheorem{thm}{Theorem}[section]
\newtheorem{prop}[thm]{Proposition}
\newtheorem{lem}[thm]{Lemma}
\newtheorem{cor}[thm]{Corollary}
\theoremstyle{definition}
\newtheorem{defn}[thm]{Definition}
\newtheorem{ex}[thm]{Example}
\theoremstyle{remark}
\newtheorem{rem}[thm]{Remark}
\newcommand{\RR}{\mathbb R}
\newcommand{\Ss}{\mathbb S}
\newcommand{\Zz}{\mathbb Z}
\newcommand{\Tt}{\mathbb T}
\renewcommand{\d}{\mathrm d}
\newcommand{\eps}{\varepsilon}
\newcommand{\X}{\ensuremath{\mathfrak{X}}}
\newcommand{\F}{\ensuremath{\mathcal{F}}}
\renewcommand{\d}{\mathrm d}               
\newcommand{\Lie}{\mathscr{L}}    
\newcommand{\smalcirc}{\mbox{\,\tiny{$\circ $}\,}}  
\DeclareMathOperator{\rank}{rank}       
\DeclareMathOperator{\codim}{codim}     
\DeclareMathOperator{\Ker}{Ker}         
\DeclareMathOperator{\im}{Im}           
\DeclareMathOperator{\pr}{pr}     
\newcommand{\al}{\alpha}
\newcommand{\be}{\beta}
\newcommand{\G}{\mathcal{G}}            
\newcommand{\K}{\mathcal{K}}            
\newcommand{\s}{\mathbf{s}}             
\renewcommand{\t}{\mathbf{t}}           
\newcommand{\m}{\mathbf{m}}             
\renewcommand{\gg}{\mathfrak{g}}        
\newcommand{\kk}{\mathfrak{k}}          
\newcommand{\ssuu}{\mathfrak{s}\mathfrak{u}}
\newcommand{\tto}{\rightrightarrows}    
\newcommand{\ract}{\curvearrowright}
\DeclareMathOperator{\Ad}{Ad}           
\newcommand*{\der}[1]{\overrightarrow{#1}}
\newcommand*{\esq}[1]{\overleftarrow{#1}}
\newcommand{\diffto}{\xrightarrow{\raisebox{-0.2 em}[0pt][0pt]{\smash{\ensuremath{\sim}}}}}
\newcommand{\timesst}{\tensor[_\s]{\times}{_\t}} 
\begin{document}

\title{Cosymplectic groupoids}

\author{Rui Loja Fernandes}
\address{Department of Mathematics, University of Illinois at Urbana-Champaign, 1409 W. Green Street, Urbana, IL 61801, USA}
\email{ruiloja@illinois.edu}

\author{David Iglesias Ponte}
\address{ULL-CSIC Geometr\'{\i}a Diferencial y Mec\'anica Geom\'etrica, Departamento de Mate\-m\'aticas, Estad\'{\i}stica e Investigaci\'on Operativa and Instituto de Matem\'aticas y Aplicaciones (IMAULL), University of La Laguna, San Crist\'obal de La Laguna, Spain}
\email{diglesia@ull.edu.es}



\thanks{RLF was partially supported by NSF grant DMS-2003223.} 

\begin{abstract}
A cosymplectic groupoid is a Lie groupoid with a multiplicative co\-sym\-plectic structure. We provide several structural results for cosymplectic grou\-poids and we discuss the relationship between cosymplectic groupoids, Poisson groupoids of corank 1, and oversymplectic groupoids of corank 1.
\end{abstract}

\maketitle

\section{Introduction}             %
\label{sec:introduction}           %

A \emph{cosymplectic groupoid} is a Lie groupoid $\G\tto M$ equipped with a multiplicative cosymplectic structure $(\omega,\al)$. This means that $\omega\in\Omega^2(\G)$, $\alpha\in\Omega^1(\G)$ are closed multiplicative forms and $\al\wedge\omega^m$ is nowhere vanishing, so it defines a volume form in $\G$. As we will see later, one must have  $\dim\G=2m+1$ where $m=\dim M$ (we assume $M$ connected).  The notion of cosymplectic groupoid was first studied in \cite{DjWa}.

Cosymplectic groupoids lie at the intersection of two well-known, interesting, classes of Lie groupoids:
\begin{itemize}
\item \emph{Poisson groupoids}, i.e., Lie groupoids with a multiplicative Poisson structure (see, e.g., \cite{MX94,We}). For a cosymplectic groupoid $(\G,\omega,\alpha)$ the associated multiplicative Poisson structure $\pi_\G\in\X^2(\G)$ has symplectic foliation $\ker\al$ and leafwise symplectic form the restriction of $\omega$.
\item \emph{Oversymplectic groupoids}, i.e., Lie groupoids with a closed multiplicative 2-form $\omega$ satisfying $\rank\omega_{1_x}=2\dim M$, for all $x\in M$ (see, e.g., \cite{BCWZ}). We will see that the 2-form of a cosymplectic groupoid satisfies this condition.
\end{itemize}
For both of these classes of Lie groupoids the base $M$ inherits a Poisson structure. For a cosymplectic groupoid $(\G,\omega,\alpha)$ the Poisson structures obtained from $\pi_\G$ and from $\omega$ coincide, and will be denoted by $\pi_M\in\X^2(M)$. Our aim in this paper is to give structural results for cosymplectic groupoids and to establish precise relationships with these two classes of Lie groupoids.

To describe our main results we observe that, as a consequence of the multiplicativity condition, the standard data associated with a cosymplectic structure satisfies:
\begin{enumerate}[(i)]
\item $\ker\al\subset T\G$ defines an integrable \emph{multiplicative} distribution in $\G$;
\item $\omega$ restricts to a symplectic form on the leaves of $\ker\al$, yielding a  \emph{multiplicative} Poisson structure $\pi_\G\in\X^2(\G)$;
\item The Reeb vector field $E\in \X(G)$, characterized by $i_E\omega=0$ and $i_E\al=1$, is \emph{bi-invariant} (i.e., it is both left and right invariant). In particular, it is a complete vector field.
\end{enumerate}
These basic facts, to be proved below, give a rich structure to a cosymplectic groupoid. For example, the collection of all orbits of the Reeb vector field intersecting the identity section is a bundle of Lie groups $\K\subset \G$ and one has:

\begin{thm}
\label{thm:main:grpd}
For any cosymplectic groupoid $(\G,\omega,\al)$ there is a short exact sequence of topological groupoids over the same base:
\begin{equation}
\label{eq:seq:cosymp:grpd}
\xymatrix{1\ar[r]& \K\ar[r] & \G\ar[r] & \Sigma \ar[r] &1}
\end{equation}
where $\Sigma$ is the orbit space of the Reeb vector field. When this space is smooth, this is a short exact sequence of Lie groupoids and $(\Sigma,\Omega)$ is a symplectic groupoid for a unique symplectic structure making the projection $\G\to\Sigma$ a Poisson map.
\end{thm}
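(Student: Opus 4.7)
The plan is to use the flow of the Reeb field $E$ as the main tool. Since $E$ is bi-invariant and complete by (iii), its flow $\phi^t_E$ commutes with both left and right translations, yielding
\begin{equation*}
\phi^t_E(gh) = g\cdot\phi^t_E(h) = \phi^t_E(g)\cdot h
\end{equation*}
for every composable pair $(g,h)$. Setting $h=1_{\s(g)}$ and $g=1_{\t(h)}$ respectively gives $\phi^t_E(g) = g\cdot\phi^t_E(1_{\s(g)}) = \phi^t_E(1_{\t(g)})\cdot g$, so every $E$-orbit is a coset $g\cdot\K_{\s(g)} = \K_{\t(g)}\cdot g$, where $\K_x$ denotes the $E$-orbit through $1_x$. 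Since $E_{1_x}$ lies in the intersection of the $\s$- and $\t$-vertical tangent spaces, it sits in the isotropy Lie algebra $\ker\rho_x$, so each $\K_x$ is a connected one-parameter subgroup of $\G_x^x$, isomorphic to $\RR$ or $S^1$. Combining the two coset formulas also yields $g\,\K_{\s(g)}\,g^{-1} = \K_{\t(g)}$, so $\K$ is a normal wide subgroupoid and a smooth bundle of Lie groups.

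Next I would construct $\Sigma := \G/\K$ as a topological quotient groupoid over $M$. Because $\K$ is contained in the isotropy, source and target descend to continuous maps $\Sigma\tto M$, and because $\K$ is normal, multiplication and inversion descend as well. This produces the sequence \eqref{eq:seq:cosymp:grpd} in the topological category. When the $E$-orbit space admits a smooth structure making $p\colon\G\to\Sigma$ a surjective submersion, $\K$ and $\Sigma$ become Lie groupoids and the sequence becomes one of Lie groupoids.

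For the symplectic form, I would exploit that $i_E\omega = 0$ (by the defining property of $E$) and $\Lie_E\omega = d\,i_E\omega + i_E d\omega = 0$ (since $\omega$ is closed). Hence $\omega$ is $E$-basic and descends to a unique closed 2-form $\Omega$ on $\Sigma$ with $p^*\Omega = \omega$. The cosymplectic condition $\al\wedge\omega^m\ne 0$ forces $\ker\omega_g = \langle E_g\rangle$ pointwise, so after quotienting out the Reeb direction $\Omega$ becomes nondegenerate. Multiplicativity of $\Omega$ is inherited from $\omega$ by pulling back the identity $\m^*\omega = \pr_1^*\omega + \pr_2^*\omega$ along the submersion $p\times p$. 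Finally, $p$ is Poisson: by (ii) the symplectic leaves of $\pi_\G$ are the leaves of $\ker\al$, and since $\al(E)=1$ each such leaf is transverse to the Reeb orbits and maps locally diffeomorphically onto $\Sigma$, with leafwise symplectic form equal to the pullback of $\Omega$. Uniqueness of $\Omega$ is automatic since $p^*$ is injective on forms.

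The main obstacle I anticipate is not the formal descent of $\omega$, which follows from standard basic-form arguments, but the smoothness issues: showing that $\K$ is a genuine Lie subgroupoid (in particular that the topological type of the fibers $\K_x$ cannot jump in an uncontrolled way between $\RR$ and $S^1$) and that $\Sigma$, when smooth, carries a manifold structure turning $p$ into a submersion. This step will require careful use of the completeness and bi-invariance of $E$, probably via local trivializations of $\K$ near the unit section.
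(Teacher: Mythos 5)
Your proposal is correct and follows essentially the same route as the paper: use bi-invariance of the Reeb vector field to show that its flow intertwines with multiplication, so that $\K$ (the orbits through the identity section) is a normal bundle of one-parameter groups in the isotropy whose cosets are exactly the Reeb orbits, form the topological quotient $\Sigma=\G/\K$, and descend $\omega$ to a multiplicative symplectic form using $i_E\omega=0$, $\Lie_E\omega=0$ and $\ker\omega=\langle E\rangle$. The paper's proof is terser (it cites Proposition \ref{prop:Reeb:right:left} for the subgroupoid property rather than writing out the coset identities), but the content is the same, and your closing caution about smoothness is apt since $\K$ is in general only an immersed subgroupoid, as Example \ref{ex:not:extension} shows.
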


{
For an arbitrary Poisson groupoid or oversymplectic groupoid, the base Poisson structure $(M,\pi_M)$ may fail to be integrable. However, for a cosymplectic groupoid it is always integrable. Indeed, the identity section of a cosymplectic groupoid $(\G,\omega,\alpha)$ is contained in a single symplectic leaf of the Poisson structure $\pi_\G$ and we have:

\begin{prop}
\label{prop:identity:leaf}
Let  $(\G,\omega,\al)$  be a cosymplectic groupoid. The symplectic  leaf of $\pi_\G$ containing $M$ is a Lie subgroupoid $\Sigma^0\subset \G$, and it yields a symplectic groupoid  $(\Sigma^0,\omega|_{\Sigma^0})\tto M$ integrating $(M,\pi_M)$.
\end{prop}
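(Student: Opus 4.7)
The plan is to exhibit $\Sigma^0$ first as a submanifold containing $M$, then as a Lie subgroupoid, and finally to equip it with its symplectic structure.

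\emph{Step 1 (the leaf through $M$).} I would first check that $\al$ vanishes on $TM$. Applying $\m^*\al=\pr_1^*\al+\pr_2^*\al$ to the tangent vector $(v,v)\in T_{(1_x,1_x)}(\G\timesst\G)$ for $v\in T_{1_x}M$ yields $\al_{1_x}(v)=2\al_{1_x}(v)$, hence $\al|_M=0$. Consequently $TM\subset\ker\al|_M$, and since $M$ is connected it lies entirely in a single leaf of the foliation $\ker\al$. By property (ii) in the introduction, this leaf is the symplectic leaf $\Sigma^0$ of $\pi_\G$. The restrictions $\s|_{\Sigma^0},\t|_{\Sigma^0}\colon\Sigma^0\to M$ are then surjective submersions, and $\dim\Sigma^0=2m$.

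\emph{Step 2 (subgroupoid property).} Closure under inversion follows from $i^*\al=-\al$, which I would derive by pulling back the multiplicativity relation along the map $g\mapsto(g,g^{-1})$ and using $\al|_M=0$; then $i$ preserves $\ker\al$, and since it fixes $M$ it preserves the leaf $\Sigma^0$. For closure under multiplication, I would restrict $\m\colon\G\timesst\G\to\G$ to $\Sigma^0\timesst\Sigma^0$: by multiplicativity, $\m^*\al|_{\Sigma^0\timesst\Sigma^0}=\pr_1^*(\al|_{\Sigma^0})+\pr_2^*(\al|_{\Sigma^0})=0$, so the image of $\m|_{\Sigma^0\timesst\Sigma^0}$ lies in a union of leaves of $\ker\al$; as the image of $(1_x,1_x)$ is $1_x\in\Sigma^0$, the connected component of $(1_x,1_x)$ maps into $\Sigma^0$. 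The main technical obstacle here is the connectedness of $\Sigma^0\timesst\Sigma^0$, which the cleanest way to handle is via the Lie algebroid picture: the closed multiplicative form $\al$ corresponds to a Lie algebroid morphism $\lambda\colon\A\to\RR$ whose kernel is a corank-one Lie subalgebroid of $\A$, and $\Sigma^0$ is the Lie subgroupoid of $\G$ integrating $\ker\lambda$.

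\emph{Step 3 (symplectic groupoid structure and integration of $\pi_M$).} The 2-form $\omega|_{\Sigma^0}$ is closed and multiplicative, as a restriction of a closed multiplicative form to a Lie subgroupoid, and it is nondegenerate on $\Sigma^0$ because $\Sigma^0$ is a symplectic leaf of $\pi_\G$ with leafwise symplectic form $\omega|_{\Sigma^0}$ by property (ii). Therefore $(\Sigma^0,\omega|_{\Sigma^0})\tto M$ is a symplectic groupoid, and the Poisson structure it induces on $M$ coincides with the one induced by $\pi_\G$, which is $\pi_M$.
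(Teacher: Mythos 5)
Your proposal follows essentially the same route as the paper's proof: show $\eps^*\al=0$ so that $M$ lies in a single leaf $\Sigma^0$ of $\ker\al$, use $\textbf{i}^*\al=-\al$ for closure under inversion, use multiplicativity of $\al$ for closure under multiplication, and then restrict $\omega$ and compose the inclusion with $\t$ to see that $(\Sigma^0,\omega|_{\Sigma^0})$ integrates $(M,\pi_M)$. The one point where you diverge is exactly the step you flag: the connectedness needed to conclude closure under multiplication. The paper resolves it by noting that $\F_{\G}\timesst\F_{\G}$ is a foliation of $\G^{(2)}$ (the intersection $T\G^{(2)}\cap(\F_{\G}\times\F_{\G})$ being transverse) and that $\Sigma^0\timesst\Sigma^0$ is the single leaf through the points $(1_x,1_x)$, so $\m$ sends it into the leaf of $\F_{\G}$ containing the units. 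Your fallback --- ``$\Sigma^0$ is the Lie subgroupoid of $\G$ integrating $\ker\lambda$'' --- is not an argument as it stands: that the leaf of $\ker\al$ through the units is a subgroupoid is precisely what is being proved, and the integration theorem for Lie subalgebroids only produces an immersed subgroupoid of the source $1$-connected integration, which would still have to be identified with $\Sigma^0$ inside the given (possibly non-$1$-connected) $\G$. To close the gap, either adopt the paper's ``single leaf of the product foliation'' formulation, or show directly that any composable pair $(g,h)\in\Sigma^0\timesst\Sigma^0$ can be joined to some $(1_y,1_y)$ by a path inside the fiber product, using that $\s|_{\Sigma^0}$ and $\t|_{\Sigma^0}$ are surjective submersions onto $M\subset\Sigma^0$. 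One further small omission: the paper also records that inversion and multiplication are smooth as maps \emph{into} $\Sigma^0$, which uses that leaves of foliations are regularly immersed submanifolds.
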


For a proper cosymplectic groupoid where $\Sigma^0$ is an embedded submanifold, one obtains a picture somewhat dual to Theorem \ref{thm:main:grpd}. Namely, the flow of the Reeb vector field for some fixed time $t_0$ gives a symplectomorphism of $\Sigma^0$ and one finds that the cosymplectic groupoid is a symplectic mapping torus.

\begin{thm}
\label{thm:main:grpd:proper}
Let $(\G,\omega,\al)$ be a proper, source connected, cosymplectic groupoid and assume that the symplectic leaf $\Sigma^0\subset \G$ containing the identity is an embedded submanifold. Then there is a symplectomorphism $\varphi:\Sigma^0\to\Sigma^0$ such that $\G$ is isomorphic to the symplectic mapping torus $\Sigma^0\times_\varphi\Ss^1$. Moreover, the resulting submersion
\[  q:\G\to \Ss^1, \]
is a fibration of Lie groupoids.
\end{thm}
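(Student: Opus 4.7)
The plan is to use the bi-invariant Reeb flow as a periodic action whose orbits are circles, with $\Sigma^0$ playing the role of a global ``section''. The most delicate step is the first one: showing the Reeb flow has a common period on all of $\G$. Since $E$ is bi-invariant it is tangent to each isotropy group $\G_x^x$, and by properness each $\G_x^x$ is compact. Hence the Reeb orbit $\K_x$ through $1_x$ is a compact connected $1$-dimensional Lie subgroup, i.e., a circle of some period $T(x) > 0$. Since $\alpha(E) = 1$ and $\alpha$ is closed,
\[ T(x) = \int_{\K_x} \alpha = \eval{[\alpha], [\K_x]}, \]
and the fibers of the circle bundle $\K \to M$ are pairwise homologous in $\G$, so $T(x) = T$ is independent of $x$. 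Bi-invariance of $E$ gives $\phi_t(g) = g \cdot \phi_t(1_{s(g)})$ for all $g$, and hence $\phi_T = \mathrm{id}_\G$: the Reeb flow factors through an action of $\RR/T\Zz$ on $\G$.

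Exploiting the embeddedness of $\Sigma^0$ and the transversality of $E$ to $\Sigma^0$ (immediate from $\alpha(E)=1$ and $T\Sigma^0 = \ker\alpha$), the intersection $\K_x \cap \Sigma^0$ is discrete in $\K_x \cong \Ss^1$, hence a finite cyclic subgroup $\Zz_k$, so
\[ \set{t\in\RR : \phi_t(\Sigma^0) = \Sigma^0} = \tau\Zz, \qquad \tau := T/k. \]
I then set $\varphi := \phi_\tau|_{\Sigma^0}$, a symplectomorphism of $(\Sigma^0, \omega|_{\Sigma^0})$ with $\varphi^k = \mathrm{id}$, and consider $F : \Sigma^0 \times \RR \to \G$ given by $F(g,t) := \phi_t(g)$. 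The transversality and the dimension count ($\dim\Sigma^0 + 1 = 2m+1 = \dim\G$) make $F$ a local diffeomorphism; its image is Reeb-saturated and open, and its complement is likewise a union of Reeb orbits of other leaves, so by connectedness of $\G$ the image is all of $\G$. A direct check yields $F(g,t) = F(g',t')$ iff $t - t' = j\tau$ and $g' = \varphi^j(g)$ for some $j \in \Zz$, so $F$ descends to a diffeomorphism $\widetilde{F} : \Sigma^0 \times_\varphi \Ss^1 \diffto \G$ with $\Ss^1 = \RR/\tau\Zz$.

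To identify the cosymplectic structures, one uses $\phi_t^*\omega = \omega$, $i_E\omega = 0$, $\phi_t^*\alpha = \alpha$, $\alpha(E)=1$, and $\alpha|_{\ker\alpha} = 0$ (all consequences of $L_E\omega = L_E\alpha = 0$) to compute directly that $F^*\omega = \pr_1^*(\omega|_{\Sigma^0})$ and $F^*\alpha = dt$; these are precisely the standard data on the symplectic mapping torus. Finally, for the projection $q : \G \to \Ss^1$, I write any composable pair as $g_i = \phi_{t_i}(g_i')$ with $g_i'\in\Sigma^0$; using the formulas $\phi_t(g) = g\cdot\phi_t(1_{s(g)}) = \phi_t(1_{t(g)})\cdot g$ and the fact (Proposition~\ref{prop:identity:leaf}) that $\Sigma^0$ is a Lie subgroupoid, one obtains
\[ g_1 g_2 = \phi_{t_1+t_2}(g_1'\, g_2'), \qquad g_1'\, g_2' \in \Sigma^0, \]
so $q(g_1 g_2) = q(g_1) + q(g_2) \pmod\tau$. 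Together with $q$ being a surjective submersion, this makes $q$ a fibration of Lie groupoids.

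The main obstacle is the constancy of the Reeb period $T(x)$: it relies on both the compactness of the isotropy groups (hence on properness, to guarantee a circle bundle $\K \to M$) and on closedness of $\alpha$ (to turn equality of homology classes into equality of periods); without either, the mapping-torus conclusion fails. A secondary but essential ingredient is the embeddedness of $\Sigma^0$, which is used to ensure that $\K_x \cap \Sigma^0$ is discrete so that $\tau > 0$; otherwise the mapping-torus decomposition would degenerate.
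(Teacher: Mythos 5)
Your overall strategy is the same as the paper's (the flow map $\Sigma^0\times\RR\to\G$, a first-return time, and descent to a $\Zz$-quotient), but the step you single out as the delicate one contains a genuine gap. You claim that, since $E$ is tangent to the compact isotropy group $\G_x^x$, the Reeb orbit $\K_x$ through $1_x$ is ``a compact connected $1$-dimensional Lie subgroup, i.e., a circle of some period $T(x)$''. This does not follow: a one-parameter subgroup of a compact Lie group need not be closed; its closure is a torus, possibly of dimension $\geq 2$, inside which the orbit can be a dense irrational winding (hence neither compact nor a circle, even though it sits inside a compact group). Example \ref{ex:not:extension} of the paper is precisely such a situation: $\G=\Tt^n\times M$ is proper, $E=-\sum_i a_i\partial/\partial\theta^i$ is bi-invariant, and for generic $a$ every orbit $\K_x$ is dense in $\Tt^n$. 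There $T(x)$ is undefined, the homology argument and the identity $\phi_T=\mathrm{id}_\G$ have nothing to apply to, and the conclusion of the theorem in fact fails. The hypothesis you are not using at this point is exactly the embeddedness of $\Sigma^0$, which you demote to a ``secondary ingredient'' needed only to make $\K_x\cap\Sigma^0$ discrete; it is needed already to close up the orbits, and properness alone cannot do it.

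The repair is the route the paper takes, and the order of quantifiers matters: embeddedness of $\Sigma^0$ together with the transversality $\alpha(E)=1$, $T\Sigma^0=\ker\alpha$, makes $\Lambda_x=\{t:\varphi^t_E(1_x)\in\Sigma^0\}$ a discrete subgroup of $\RR$ (a subgroup because $\Sigma^0$ is a subgroupoid and $\varphi^s_E(1_x)\cdot\varphi^t_E(1_x)=\varphi^{s+t}_E(1_x)$), hence $\Lambda_x=t_0\Zz$ or $\{0\}$; and it is nontrivial because otherwise the groupoid morphism $\Phi:\Sigma^0\times\RR\to\G$, $(g,t)\mapsto\varphi^t_E(g)$ --- a surjective local diffeomorphism thanks to embeddedness and source-connectedness --- would restrict to an injective, surjective local diffeomorphism from the noncompact group $(\Sigma^0)^x_x\times\RR$ onto the compact group $\G_x^x$. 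This yields the positive first-return time $t_0$ with $\varphi^{t_0}_E(\Sigma^0)=\Sigma^0$; periodicity of the Reeb orbits (your $T$) is then a \emph{consequence}, obtained as in the paper's Lemma \ref{lem:aux:proper} from the finiteness of the group $\K_x\cap\Sigma^0$, not an input available beforehand. Once this is fixed, your construction of $F$, the identification of the kernel, the pullback computation of $(\omega,\alpha)$, and the verification that $q$ is a groupoid fibration all go through and agree with the paper's argument.
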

}

The short exact sequence  \eqref{eq:seq:cosymp:grpd} may fail to be smooth and, if smooth, it may fail to split. However, at the infinitesimal level it always splits. In fact, the Lie algebroid $A\to M$ of a cosymplectic groupoid $(\G,\omega,\al)$ carries a closed IM 2-form $\mu:A\to T^*M$ and a closed IM 1-form  $\nu:A\to M\times\RR$, corresponding to $\omega$ and $\al$, respectively. We then obtain the following:

\begin{thm}
\label{thm:main:algbrd}
If $(\G,\omega,\al)$ is a cosymplectic groupoid, its Lie algebroid $(A,\mu,\nu)$ is canonically isomorphic to the trivial central extension of the cotangent algebroid associated with the base Poisson manifold $(M,\pi_M)$:
\[ (A,\mu,\nu)\simeq (T^*M\oplus\RR,\pr_{T^*M},\pr_{M\times\RR}). \]
\end{thm}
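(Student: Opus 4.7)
My plan is to construct an explicit bundle map
\[
\Phi\colon A\rmap T^*M\oplus(M\times\RR),\qquad \Phi(a):=(\mu(a),\nu(a)),
\]
and verify it is an isomorphism of Lie algebroids; by construction $\Phi$ then intertwines $\mu$ with $\pr_{T^*M}$ and $\nu$ with $\pr_{M\times\RR}$.

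The starting point is the Reeb vector field $E$ from item (iii). Bi-invariance makes $E$ tangent to both $\s$- and $\t$-fibres, so $\mathbf{e}:=E|_M$ is a well-defined section of $A$ with $\rho(\mathbf{e})=0$. Combined with $i_E\omega=0$, $i_E\al=1$, and the standard fact that multiplicative forms pull back trivially along the unit section, one reads off
\[
\mu(\mathbf{e})=0,\qquad \nu(\mathbf{e})=1,\qquad \rho(\mathbf{e})=0.
\]

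To prove $\Phi$ is a vector bundle isomorphism, since both bundles have rank $m+1$ it suffices to verify pointwise injectivity. Fix $x\in M$ and decompose $T_{1_x}\G=T_xM\oplus A_x$. The rank condition $\rank\omega_{1_x}=2m$ together with $i_E\omega=0$ forces $\ker\omega_{1_x}=\RR E_{1_x}$, and in the symplectic quotient $T_{1_x}\G/\RR E_{1_x}$ (of dimension $2m$) the image of $T_xM$ is isotropic of dimension $m$, hence Lagrangian; pulling back gives
\[
(T_xM)^{\perp_\omega}=T_xM\oplus\RR\mathbf{e}(x).
\]
Therefore any $a\in A_x$ with $\mu(a)=0$ lies in $\bigl(T_xM\oplus\RR\mathbf{e}(x)\bigr)\cap A_x=\RR\mathbf{e}(x)$, and the further condition $\nu(a)=0$ forces $a=0$ since $\nu(\mathbf{e})=1$. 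I expect this linear-algebra step to be the only genuinely cosymplectic-specific part of the argument.

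It remains to check that $\Phi$ intertwines the Lie algebroid structures. For the anchor I would invoke the standard identity $\rho=\pi_M^\sharp\circ\mu$ valid for the IM $2$-form of any closed multiplicative $2$-form inducing $\pi_M$; this coincides with the anchor of $T^*M\oplus(M\times\RR)$ evaluated on $\Phi(a)$. For the bracket, combining the IM identity
\[
\mu([a,b])=\Lie_{\rho(a)}\mu(b)-i_{\rho(b)}\d\mu(a)
\]
with $\rho=\pi_M^\sharp\circ\mu$, Cartan calculus, and the Koszul formula for the cotangent bracket reproduces $[\mu(a),\mu(b)]_{T^*M}$, while the $\RR$-component reduces to the $1$-cocycle identity $\nu([a,b])=\rho(a)\nu(b)-\rho(b)\nu(a)$, which is exactly the IM condition encoding closedness and multiplicativity of $\al$ and matches the bracket on $T^*M\oplus(M\times\RR)$.
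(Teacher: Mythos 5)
Your proposal is correct and follows essentially the same route as the paper: the paper also builds the isomorphism $a\mapsto(\mu(a),\nu(a))$, uses the central Reeb section $e$ with $\mu(e)=0$, $\nu(e)=1$, $\rho(e)=0$, derives $\rho=\pi_M^\sharp\circ\mu$ from the target map being Poisson, and obtains the bracket from the closed IM identities for $\mu$ and $\nu$. The only cosmetic differences are that the paper establishes bijectivity by showing $\mu^*$ is injective (since $\ker\omega^\flat=\langle E\rangle$ is transverse to $M$) rather than by your dual Lagrangian-complement computation of $(T_xM)^{\perp_\omega}$, and it packages the bracket verification as the vanishing of the curvature of the splitting $\underline{\nu}$ of a central extension.
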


Notice that given a source connected cosymplectic groupoid $(\G,\omega,\alpha)$, its source 1-connected cover is a cosymplectic groupoid. Applying the previous result, the latter are easy to describe:

\begin{cor}
\label{cor:integration}
If $(\G,\omega,\alpha)$ is a source 1-connected cosymplectic groupoid then there is a canonical isomorphism
\[ (\G,\omega,\alpha)\cong (\Sigma(M)\times\RR,\pr_\Sigma^*\Omega,\pr_\RR^*\d t) \]
where $(\Sigma(M),\Omega)$ is the source 1-connected symplectic integration of $(M,\pi_M)$.
\end{cor}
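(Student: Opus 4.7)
The plan is to bootstrap Theorem \ref{thm:main:algbrd} using Lie's second theorem for Lie algebroids together with the integration of closed IM forms on source 1-connected Lie groupoids. By Theorem \ref{thm:main:algbrd}, the Lie algebroid of $\G$, equipped with the IM data of $(\omega,\al)$, is canonically isomorphic to $(T^*M\oplus\RR,\pr_{T^*M},\pr_{M\times\RR})$, where $T^*M$ carries the cotangent Lie algebroid of $\pi_M$ and the $\RR$-summand is central: abelian fibers with zero anchor.

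Because the Lie algebroid is a direct product over $M$, its source 1-connected integration is the fibred product
\[ \Sigma(T^*M\oplus\RR)\cong \Sigma(M)\times_M(M\times\RR)\cong \Sigma(M)\times\RR, \]
where $(\Sigma(M),\Omega)$ is the source 1-connected symplectic integration of $(M,\pi_M)$ and the trivial bundle of abelian Lie algebras integrates to $M\times\RR$. Since $\G$ is source 1-connected by assumption, Lie II for Lie groupoids produces a canonical isomorphism of Lie groupoids $\G\cong\Sigma(M)\times\RR$.

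What remains is to match the multiplicative forms under this isomorphism. The 2-form $\pr_\Sigma^*\Omega$ is closed and multiplicative on $\Sigma(M)\times\RR$; since the IM form of $\Omega$ on $\Sigma(M)$ is the identity $T^*M\to T^*M$ (this is the defining property of the symplectic integration of $(M,\pi_M)$), the IM 2-form of $\pr_\Sigma^*\Omega$ is exactly $\pr_{T^*M}$. Similarly, $\pr_\RR^*\d t$ is closed and multiplicative, and its IM 1-form sends $(\xi,r)\in T^*_xM\oplus\RR$ to $r$, i.e.\ it coincides with $\pr_{M\times\RR}$. By the uniqueness part of the integration theorem for closed IM forms on source 1-connected groupoids, the forms $(\omega,\al)$ on $\G$ correspond under the Lie II isomorphism to $(\pr_\Sigma^*\Omega,\pr_\RR^*\d t)$, giving the desired isomorphism of cosymplectic groupoids.

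The only nontrivial ingredient is the integration of closed IM forms to multiplicative forms on source 1-connected groupoids, which is a standard result (e.g.\ Bursztyn--Cabrera--Ortiz, Crainic--Salazar) and which I would invoke as a black box; everything else reduces to the infinitesimal identification provided by Theorem \ref{thm:main:algbrd} and a direct computation of IM forms on the candidate model $\Sigma(M)\times\RR$.
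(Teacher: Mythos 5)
Your proof is correct and follows essentially the same route as the paper: identify the Lie algebroid with $T^*M\oplus\RR$ via Theorem \ref{thm:main:algbrd}, integrate that isomorphism using source 1-connectedness, and then match the forms. The only difference is that you justify the final step (that the isomorphism carries $(\omega,\al)$ to $(\pr_\Sigma^*\Omega,\pr_\RR^*\d t)$) explicitly via the uniqueness part of the integration theorem for closed IM forms, a detail the paper's proof asserts without comment.
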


In the last part of this note we discuss how far Poisson groupoids and oversymplectic groupoids are from being cosymplectic groupoids.

We say that a Poisson groupoid $(\G,\pi_\G)$ is of corank 1 if its Poisson structure has constant rank equal to $\dim\G-1$, i.e., if its symplectic foliation is regular of codimension 1. We have the following simple criteria:

\begin{prop}
\label{prop:intro:Poisson:grpd}
Let $(\G,\pi_\G)$ be a Poisson groupoid of corank 1. Then $\pi_\G$ is associated with a cosymplectic structure if and only if there exists a Poisson vector field $E\in\X(\G)$ transverse the symplectic foliation of $\pi_\G$ which is bi-invariant.
\end{prop}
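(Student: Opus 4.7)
The plan is to treat the two directions separately. The forward direction is essentially a repackaging of property (iii) from the introduction: given a cosymplectic groupoid $(\G,\omega,\al)$, the Reeb vector field $E$ determined by $i_E\omega=0$ and $i_E\al=1$ is bi-invariant by (iii), transverse to the symplectic foliation $\ker\al$ since $\al(E)=1\neq 0$, and a Poisson vector field because its flow preserves both $\omega$ and $\al$, hence $\pi_\G$.

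For the converse, I reconstruct $\al$ and $\omega$ from $\pi_\G$ and $E$. Since $\pi_\G$ has corank one with symplectic foliation $\F=\im\pi_\G^\sharp$, and $E$ is transverse to $\F$, there is a global decomposition $T\G=T\F\oplus\RR E$. I define $\al\in\Omega^1(\G)$ by $\ker\al=T\F$ and $\al(E)=1$, and $\omega\in\Omega^2(\G)$ to be the leafwise symplectic form dual to $\pi_\G$ along $T\F$, extended to all of $T\G$ by $i_E\omega=0$. The volume condition $\al\wedge\omega^m\neq 0$ is immediate. For closedness I would first check $\Lie_E\al=0$ (since $E$ is Poisson, so preserves $\F=\ker\al$, and $\al(E)=1$ is constant) and $\Lie_E\omega=0$ (since the flow preserves each symplectic leaf together with its symplectic form); Cartan's formula then gives $i_Ed\al=0$ and $i_Ed\omega=0$. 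On the other hand, involutivity of $T\F$ yields $d\al|_{T\F}=0$, and leafwise closedness of the symplectic form yields $d\omega|_{T\F}=0$. Combined with the splitting $T\G=T\F\oplus\RR E$, this forces $d\al=0$ and $d\omega=0$.

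The main obstacle is multiplicativity of $\al$ and $\omega$. The key observation is that the decomposition $T\G=T\F\oplus\RR E$ is itself a multiplicative splitting: multiplicativity of $\pi_\G$ makes $T\F=\im\pi_\G^\sharp$ a multiplicative subbundle of $T\G$ and endows the leafwise symplectic form with compatible multiplicativity, while bi-invariance of $E$ (which forces $d\s(E)=d\t(E)=0$, so composable pairs of tangent vectors decompose compatibly with the splitting) provides a multiplicative trivialization of the complementary line bundle. Unwinding the definitions against the tangent groupoid multiplication on $T\G$ should then show that $\al$ (the projection onto the $\RR E$ factor, normalized by $\al(E)=1$) and $\omega$ (leafwise symplectic, extended by $i_E\omega=0$) are multiplicative forms, completing the construction of the cosymplectic structure.
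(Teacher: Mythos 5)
Your proof is correct and follows essentially the same route as the paper's: the forward direction invokes the bi-invariance and Poisson property of the Reeb vector field, and the converse reconstructs $\al$ and $\omega$ from the splitting $T\G=\im\pi_\G^\sharp\oplus\RR E$ and checks multiplicativity by decomposing composable tangent vectors using the multiplicativity of the distribution $\im\pi_\G^\sharp$ together with the bi-invariance of $E$, which is exactly the paper's computation. One small slip in your closedness argument: the flow of $E$ does not preserve each symplectic leaf (it is transverse to them), but it does preserve $\pi_\G$ and $E$, and since $\omega$ is uniquely determined by these two data this still gives $\Lie_E\omega=0$.
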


One the other hand, when $(\G\tto M,\pi_\G)$ is \emph{proper} of corank 1 with $\dim\G=2\dim M+1$ we will see that if its leafwise symplectic form admits a multiplicative extension (closed or not) then, up to a cover, it is homotopic to a cosymplectic groupoid through a homotopy that does not change the base Poisson structure:

{
\begin{thm}
\label{thm:intro:proper:Poisson:grpd}
Let $(\G,\pi_\G)\tto (M,\pi_M)$ be an orientable proper Poisson groupoid of corank 1 with $\dim\G=2\dim M+1$ and assume that there exists a multiplicative 2-form extending its leafwise symplectic form. If $(\widetilde{\G},\widetilde{\pi_\G})$ is its universal covering groupoid, then there is a path of Poisson structures $\widetilde{\pi}_\G^{\,t}\in\X^2(\widetilde{\G})$, starting at $\widetilde{\pi}_\G^{\,0}=\widetilde{\pi_\G}$, with the following properties:
\begin{enumerate}[(i)]
\item each $\widetilde{\pi}_\G^{\,t}$ is multiplicative of corank 1;
\item the Poisson structure on $M$ induced by $\widetilde{\pi}_\G^{\,t}$ is $\pi_M$;
\item $\widetilde{\pi}_\G^{\,1}$ is associated with a multiplicative cosymplectic structure.
\end{enumerate}
\end{thm}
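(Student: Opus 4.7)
The plan is to produce on the universal cover $\widetilde\G$ a multiplicative cosymplectic pair $(\omega',\alpha)$, whose associated Poisson structure will be $\widetilde{\pi}_\G^{\,1}$, and then to interpolate through multiplicative corank 1 Poisson structures between $\widetilde{\pi_\G}$ and this endpoint. The three hypotheses play complementary roles: orientability gives a global transverse direction to the symplectic foliation, properness lets us average forms into multiplicative ones and invoke van Est-type vanishing results for proper groupoid cohomology, and passage to $\widetilde\G$ kills period obstructions that would otherwise prevent a transverse $1$-form from being made closed.

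First, I would upgrade the given multiplicative $2$-form $\omega$ to a closed one. The form $d\omega$ is a closed multiplicative $3$-form whose leafwise restriction vanishes because $\omega|_{\mathcal F}$ is leafwise symplectic, hence leafwise closed. Using the averaging machinery on the proper Lie groupoid $\G$ and the vanishing of the relevant differentiable cohomology, I would find a multiplicative $2$-form $\eta$ with $d\eta=d\omega$ and $\eta|_{\mathcal F}=0$; then $\omega-\eta$ is a closed multiplicative $2$-form extending the same leafwise symplectic form, so it represents the same Poisson structure $\pi_\G$. Lifting to $\widetilde\G$ yields a closed multiplicative $2$-form $\omega'$.

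Next, I would construct a closed multiplicative $1$-form $\alpha$ on $\widetilde\G$ transverse to the symplectic foliation. Orientability of $\G$ together with corank 1 of the foliation implies that the conormal bundle of the symplectic foliation is a trivializable line bundle, so the foliation is globally defined by a nowhere-zero $1$-form $\alpha_0$. Averaging $\alpha_0$ along source fibres against a Haar system makes it multiplicative without changing its kernel. The modular class of the coorientable foliation is the obstruction to closing $\alpha_0$; on $\widetilde\G$ this class becomes exact by simple-connectedness, so it can be written as $df$ and the rescaling $\alpha:=e^f\alpha_0$ becomes both closed and multiplicative. The pair $(\omega',\alpha)$ is then cosymplectic and its associated Poisson bivector $\widetilde{\pi}_\G^{\,1}$ is multiplicative of corank 1 with base Poisson structure $\pi_M$.

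Finally, I would connect $\widetilde{\pi_\G}=\widetilde{\pi}_\G^{\,0}$ to $\widetilde{\pi}_\G^{\,1}$ through a Moser-type path. At the infinitesimal level both endpoints correspond to Lie bialgebroid structures on $A$ whose duals agree on the component defining $\pi_M$ and differ by an affine family of multiplicative Poisson cocycles; interpolating this affine family and integrating on $\widetilde\G$ yields the desired path. The main obstacle is ensuring that the interpolation stays within multiplicative \emph{Poisson} bivectors (i.e.\ remains involutive) and of corank 1 throughout: I expect to use properness a second time, via the integrability of paths of Lie bialgebroid structures on a proper integration, together with openness of the corank 1 condition near both endpoints and a convex-combination argument at intermediate times.
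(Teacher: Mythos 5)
There is a genuine gap, and it sits at the heart of your plan. Steps 1 and 2 aim to produce a \emph{closed} multiplicative 2-form $\omega'$ and a \emph{closed} multiplicative 1-form $\al$ on $\widetilde{\G}$ with $\omega'$ restricting to the given leafwise symplectic form and $\ker\al$ equal to the tangent bundle of the symplectic foliation. If both existed, the pair $(\omega',\al)$ would be a multiplicative cosymplectic structure whose associated Poisson bivector is $\widetilde{\pi_\G}$ itself (same foliation, same leafwise form), so the theorem would hold with the \emph{constant} path --- a strictly stronger statement, and one that is obstructed. By Lemma \ref{lem:gamma:omega} and the remark following it, the second components of the IM forms of any multiplicative pair adapted to $\pi_\G$ satisfy $\tilde{\nu}(e)=\gamma$ and $\tilde{\mu}(e)=\Omega$, where $\gamma$ and $\Omega$ are computed from $\d_*e$, i.e.\ from the Lie bialgebroid of $(\G,\pi_\G)$ alone. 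Closedness of $\al$ and $\omega'$ forces $\tilde{\nu}=0$ and $\tilde{\mu}=0$, hence $\gamma=0$ and $\Omega=0$; but these are intrinsic invariants of the multiplicative Poisson structure living over $M$, they do not vanish in general, and they are unaffected by passing to the universal covering groupoid (which 1-connects the source fibres but does not make the total space simply connected, so your appeal to simple-connectedness to exactify the obstruction to closing $\al_0$ also fails). Two further local problems: vanishing of differentiable cohomology of a proper groupoid does not by itself yield a multiplicative primitive $\eta$ of $\d\omega$ with the extra constraint $\eta|_{\F}=0$; and rescaling a multiplicative form by $e^f$ is multiplicative only when $f$ is an additive (multiplicative-cocycle) function, not for an arbitrary primitive of the modular class.

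The paper's route accepts that $\omega$ and $\al$ cannot be made closed and instead deforms the Poisson structure. It shows that the Lie bialgebroid $(A^*,A)$ is triangular with data the triple $(\gamma,\Omega,\pi_M)$ (Proposition \ref{prop:triangular}), scales first $\Omega$ and then $\gamma$ linearly to zero --- checking at each time that the triple still satisfies the hypotheses guaranteeing a triangular Lie bialgebroid, which is what settles your worry about involutivity along the path, with no openness or convex-combination argument needed --- and then integrates the resulting path of Lie bialgebroids over the fixed algebroid $A$ via Mackenzie--Xu on the source 1-connected cover. Your final ``Moser-type interpolation'' gestures at this, but without identifying the triangular structure and the conditions $i_{\pi_M^\sharp(\be)}\gamma=0$, $i_{\pi_M^\sharp(\be)}\Omega=0$ that keep intermediate times Poisson, the interpolation step is not justified; and without it the first two steps do not produce the endpoint either.
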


Let us turn now to oversymplectic groupoids. Given such a groupoid $(\G,\omega)$, if the foliation given by $\ker\omega$ is simple then the leaf space 
\[ \Sigma:=\G/\ker\omega\]
is automatically a symplectic groupoid (this is the origin of the term ``oversymplectic''; see \cite{BCWZ}).  If  $(\G,\omega)$ is proper and $\ker\omega$ is an orientable line bundle, the quotient map $\Phi:\G\to\Sigma$ yields a short exact sequence of Lie groupoids
\[ 
\xymatrix{1\ar[r]& M\times\Ss^1\ar[r] & \G\ar[r]^{\Phi} & \Sigma \ar[r] &1}.
\]
It follows from recent results in \cite{FM22} that associated to such a sequence there is a well-defined \emph{multiplicative Chern class}, living in the multiplicative de Rham cohomology of $\Sigma\tto M$,
\[ c(\G) \in H^2_M(\Sigma). \]
We will show that:

\begin{thm}
\label{thm:intro:oversymplectic:grpd}
Let $(\G,\omega)$ be a corank 1, orientable, proper oversymplectic groupoid. If $\ker\omega$ is a simple foliation, then there exists $\al\in\Omega^1(\G)$ such that $(\G,\omega,\al)$ is a cosymplectic groupoid if and only if the multiplicative Chern class vanishes.
\end{thm}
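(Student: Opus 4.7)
My plan is to translate the existence of $\alpha$ into a purely cohomological question about the central extension $1\to M\times\Ss^1\to\G\to\Sigma\to 1$ and then feed it into the multiplicative Gysin sequence from \cite{FM22}. First I would note that, under the hypotheses, the quotient $\Phi:\G\to\Sigma$ by the simple rank $1$ foliation $\ker\omega$ is a surjective submersion whose fibers are the leaves, and that orientability of $\ker\omega$ together with properness of $\G$ forces these fibers to be circles; this is precisely the extension in the statement, and one has $\omega=\Phi^*\Omega$. The existence of $\alpha$ completing $\omega$ to a cosymplectic structure is then equivalent to the existence of a closed multiplicative $1$-form on $\G$ whose restriction to $\ker\omega$, that is to the $\Ss^1$-fibers of $\Phi$, is nowhere vanishing.

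The second step would be to reduce this pointwise condition to the cohomological condition that the fiber integral be nonzero. The key observation is that any closed multiplicative $1$-form is automatically invariant along the fibers of $\Phi$: the $\Ss^1$-action on fibers is left multiplication by the central bundle of groups $\K=M\times\Ss^1$, and applying the multiplicativity identity $m^*\alpha=\pr_1^*\alpha+\pr_2^*\alpha$ to composable pairs $(k,g)$ with $k\in\K$ yields $L_k^*\alpha=\alpha$. Hence the restriction of $\alpha$ to any $\Ss^1$-fiber is a closed, invariant $1$-form on $\Ss^1$, therefore a constant multiple of the canonical generator of $H^1(\Ss^1)$, and this constant is nonzero precisely when the fiber integral of $\alpha$ is. Since $\alpha$ is closed the fiber integral is locally constant on $\Sigma$, so, up to rescaling, the cosymplectic question becomes: does there exist a closed multiplicative $1$-form on $\G$ whose fiber integral equals the constant $1$?

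The final step is to invoke the multiplicative Gysin sequence of \cite{FM22}, which for the central $\Ss^1$-extension takes the form
\[
\cdots\To H^1_M(\G)\xrightarrow{\,\int_{\mathrm{fib}}\,}H^0_M(\Sigma)\xrightarrow{\,\smile c(\G)\,}H^2_M(\Sigma)\xrightarrow{\,\Phi^*\,}H^2_M(\G)\To\cdots,
\]
with connecting homomorphism given by cup product with the multiplicative Chern class. By exactness, the class $1\in H^0_M(\Sigma)$ lifts through $\int_{\mathrm{fib}}$ if and only if $c(\G)=0$, and such a lift is represented exactly by a closed multiplicative $1$-form on $\G$ with fiber integral $1$. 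Combined with the reduction of the previous paragraph, this yields both directions. The main obstacle I anticipate is importing from \cite{FM22} the precise form of the multiplicative Gysin sequence and the identification of its connecting morphism with $c(\G)$; once these inputs are in place, the rest of the argument is formal, and the automatic invariance step is what makes the translation between the geometric and cohomological formulations possible.
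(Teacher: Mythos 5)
Your proposal is correct in substance, but it reaches the conclusion by a genuinely different mechanism than the paper. The paper's argument is a direct curvature computation: it uses the result from \cite{FM22} that a proper $\Ss^1$-central extension always admits a multiplicative Ehresmann connection $\al$ (a multiplicative $1$-form with $i_{\partial_\theta}\al=1$), defines the multiplicative Chern class as the class of the basic curvature $\underline{\Omega}$ with $\d\al=q^*\underline{\Omega}$, and observes that $[\underline{\Omega}]=0$ lets one correct $\al$ by the pullback of a multiplicative primitive to get a \emph{flat} (i.e.\ closed) connection; then $\al\wedge\omega^n\neq 0$ is immediate from $i_{\partial_\theta}\al=1$ and the nondegeneracy of $\Omega_\Sigma$, and the converse is read off directly. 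You instead package the equivalence ``$c(\G)=0$ iff a closed multiplicative $1$-form with fiber integral $1$ exists'' into exactness of a multiplicative Gysin sequence. What your route buys is a clean conceptual picture and, as a genuine bonus, the automatic fiber-invariance lemma ($L_k^*\al=\al$ for $k\in\K$, from multiplicativity applied to pairs $(0_k,Y_g)$ with $Y_g\in\ker\d\t$), which the paper's ``obviously'' in the forward direction quietly relies on: it is what shows that a cosymplectic $\al$ restricts on each fiber to a \emph{constant} multiple of $\d\theta$, and (since multiplicativity forces the fiber integral to descend to the connected base $M$) that this constant is global, so one may normalize to get an honest flat multiplicative connection. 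What the paper's route buys is economy: it needs only the existence of some multiplicative connection on a proper extension, not a full multiplicative Gysin sequence with its connecting map identified as cup product with $c(\G)$ --- and that sequence is precisely the input you flag as uncertain. If it is not available off the shelf in \cite{FM22} in the form you state, the one instance of exactness you actually use (the class $1\in H^0_M(\Sigma)$ lifts iff $c(\G)=0$) should be proved by hand, and the by-hand proof is exactly the paper's connection-correction argument; so you would want to either cite the sequence precisely or replace that step by the direct computation.
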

}

This paper is organized as follows. In Section \ref{sec:groupoids} we recall some basics about cosymplectic structures, we introduce cosymplectic groupoids, establish its basic properties, and we prove Theorems \ref{thm:main:grpd} and \ref{thm:main:grpd:proper}. In Section \ref{sec:algebroids} we construct the infinitesimal data associated with a cosymplectic groupoid and we show that its Lie algebroid fits into a split short exact sequence, proving Theorem \ref{thm:main:algbrd} and Corollary \ref{cor:integration}. In Section \ref{sec:Poisson:grpd} we discuss relationships between Poisson groupoids, oversymplectic groupoids and cosymplectic groupoids, deducing in particular Proposition \ref{prop:intro:Poisson:grpd} and Theorems \ref{thm:intro:proper:Poisson:grpd} and \ref{thm:intro:oversymplectic:grpd}. We mostly follows the conventions and notation of the monograph \cite{CFM21}, to which we refer for background on Poisson structures and symplectic groupoids.

\bigskip
\emph{Acknowledgments.} We would like to thank 
the anonymous referee for pointing out an error in a first version of the paper.

\section{Cosymplectic groupoids}                  %
\label{sec:groupoids}                                       %

\subsection{Background on cosymplectic structures}            %
\label{sec:background}             %

A \emph{cosymplectic structure} on a manifold $Q$ of dimension $2m+1$ is a pair $(\omega, \al )$, where
$\omega$ is a closed 2-form, $\al$ is a closed 1-form and  $\omega ^m\wedge \al$ is a volume form. These structures were first introduced by Liberman \cite{Libermann59}. We collect here some basic facts about cosymplectic structures.

Associated with a cosymplectic  $(\omega,\al)$ on $Q$ there is a non-vanishing vector field $E\in\X(Q)$, called the \emph{Reeb vector field}, 
characterized by 
\begin{equation}
\label{eq:Reeb}
i_E\omega =0, \quad \al(E) =1.
\end{equation}
On the other hand, $\ker\al\subset TQ$ is an integrable distribution and the restriction of $\omega$ to its leaves is symplectic. The resulting symplectic foliation determines a regular \emph{Poisson structure} $\pi_Q\in\X^2(Q)$ of corank 1. 
Notice that, by construction, the closed 2-form $\omega$ extends the leafwise symplectic form of $\pi_Q$
\[ \omega(\pi_Q^\sharp(\be_1),\pi_Q^\sharp(\be_2))=\langle \be_1,\pi_Q^\sharp(\be_2)\rangle,\quad  (\be_1,\be_2\in T^*Q). \]
Moreover, the Reeb vector field is a Poisson vector field everywhere transverse to the symplectic foliation. 

Conversely, assume that $(Q,\pi_Q)$ is a regular Poisson structure of corank 1. If $E$ is a vector field transverse to the symplectic foliation, then one obtains
\begin{enumerate}[(i)]
\item a 2-form $\omega$ extending the leafwise symplectic form such that $\ker\omega=\langle E\rangle$ and 
\item a 1-form $\al$ such that $\al(E)=1$ and $\ker\al=\im\pi_Q^\sharp$. 
\end{enumerate}
It is not hard to check that $\omega$ and $\al$ are closed iff $E$ is a Poisson vector field, so we have (\cite[Proposition 18]{GMP}):

\begin{prop}
A regular Poisson structure $\pi_Q\in\X^2(Q)$ of corank 1 is defined by a cosymplectic structure if and only if there a Poisson vector field transverse to the symplectic foliation.
\end{prop}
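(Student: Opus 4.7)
The plan is to exploit the splitting $TQ = T\F \oplus \langle E\rangle$, where $\F$ denotes the symplectic foliation of $\pi_Q$, and to analyze the closedness of the two forms directionally. The key observation is that each of the two closedness conditions corresponds to one of the two conditions, preservation of the foliation and preservation of the leafwise symplectic form, that together characterize $E$ as a Poisson vector field.

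For the ``only if'' direction, given a cosymplectic $(\omega,\al)$ with Reeb field $E$, transversality to $T\F=\ker\al=\im\pi_Q^\sharp$ is immediate from $\al(E)=1$. Closedness of $\omega$ and $\al$ together with $i_E\omega=0$ and $\al(E)=1$ yield, via Cartan, $\Lie_E\omega=0$ and $\Lie_E\al=0$. Hence the flow of $E$ preserves the whole cosymplectic data, and in particular $\pi_Q$, so $E$ is a Poisson vector field.

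For the converse, given a transverse Poisson vector field $E$, I would define $\al$ and $\omega$ as in the paragraph preceding the statement and test their closedness by splitting triples of vectors according to how many of them lie along $E$. For $X,Y\in T\F$, involutivity gives $\d\al(X,Y)=-\al([X,Y])=0$, while $\d\al(E,X)=-\al([E,X])$ vanishes exactly when the flow of $E$ preserves $T\F$. Using $i_E\omega=0$ and that $\omega|_{T\F}$ is leafwise closed, Cartan's formula gives $\d\omega(X,Y,Z)=0$ for $X,Y,Z\in T\F$ and $\d\omega(E,X,Y)=(\Lie_E\omega)(X,Y)$ for $X,Y\in T\F$. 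Therefore $\d\al=0$ is equivalent to the flow of $E$ preserving $T\F$, and, given this, $\d\omega=0$ is equivalent to $\Lie_E\omega=0$. Jointly, these two conditions mean that the flow of $E$ preserves both the symplectic foliation and the leafwise symplectic form, which is exactly the condition $\Lie_E\pi_Q=0$.

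The step I expect to require the most care is the smooth construction of $\omega$ (and to a lesser extent $\al$) from the pair $(\pi_Q,E)$: the leafwise symplectic form exists tautologically, but extending it to a smooth global 2-form on $Q$ with $\ker\omega=\langle E\rangle$ hinges on the smoothness of the transverse splitting of $TQ$, which is precisely where the assumptions of regularity of $\pi_Q$ and transversality of $E$ enter. Once this is in place, the directional computations above are essentially mechanical.
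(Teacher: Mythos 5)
Your proof is correct and follows essentially the same route as the paper, which constructs $\omega$ and $\al$ from the splitting $TQ=\im\pi_Q^\sharp\oplus\langle E\rangle$ exactly as you do and then asserts (citing Guillemin--Miranda--Pires) that closedness of the pair is equivalent to $E$ being Poisson. Your directional verification of that equivalence is a correct filling-in of the details the paper leaves to the reader.
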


Two cosymplectic structures $(\omega,\al)$ and $(\widetilde{\omega},\widetilde{\alpha})$ define the same Poisson structure $\pi_Q$ if and only $\widetilde{\alpha}=f\alpha$ for a nowhere vanishing Casimir function $f\in C^\infty(Q)$, and $\widetilde{\omega}-\omega$ is a closed two form vanishing on $\ker\al=\ker\widetilde{\al}$. In this case, the corresponding Reeb vector fields are related by $\widetilde{E}=\frac{1}{f} E$. 

The following examples give some basic constructions of cosymplectic manifolds related with symplectic manifolds:

\begin{ex}
If $(S,\omega_S)$ is a symplectic manifold then $Q=S \times \RR$ admits the  cosymplectic structure $(\pr_S^*\omega_S,\pr_\RR^*\d t)$, where $t$ denotes the coordinate on the second factor. In this case, the vector field $E$ is just $\frac{\partial}{\partial t}$, while the Poisson structure is $\pi_Q=\omega^{-1}\oplus 0$. 

Obviously, one can replace $\RR$ by $\Ss^1$ and $\d t$ by $\d\theta$, obtaining a cosymplectic structure on $S\times\Ss^1$. More generally, one can consider a principal $\Ss^1$-bundle $p:Q\to S$ over a symplectic manifold $(S,\omega_S)$ that admits a flat connection 1-form $\al\in\Omega^1(Q)$. Then $(p^*\omega_S,\al)$ is a cosymplectic structure whose underlying Poisson structure has symplectic foliation the horizontal foliation of $\al$. The Reeb vector field is the infinitesimal generator of the $\Ss^1$-action.
\end{ex}

\begin{ex}\label{ex:symp:mapping:torus}
Let  $(S,\omega_S)$ be a symplectic manifold and  $\varphi \colon S\to S$ a symplectomorphism. Recall that the corresponding \emph{symplectic mapping torus} is the fiber bundle
\[ q\colon S_\varphi \to \Ss^1,\] 
where $S_\varphi=(S\times\RR)/\Zz$ is the orbit space of the free and proper action
\[ \Zz\ract S\times\RR, \quad n\cdot(x,t)=(\varphi^n(x),t+n). \]
The manifold $S_\varphi$ is equipped with the cosymplectic structure $(\omega ,\al)$, where $\omega$ is the 2-form obtained from the basic form $\pr_S^*\omega_S\in\Omega^2(S\times\RR)$ and $\al =q^\ast (\d \theta )$, with $\theta$ the angle coordinate on $S^1$. The Reeb vector field $E\in\X(S_\varphi)$ is obtained by projecting the vector field $\frac{\partial}{\partial t}\in \mathfrak{X}(S\times \RR)$.
\end{ex}

\begin{rem}
Tischler's theorem \cite{Tischler70} shows that given a nowhere vanishing closed 1-form $\al$ on a compact manifold $Q$, there exists a fibration $q:Q\to\Ss^1$ with the property that one can choose $c>0$ such that $c\al$ and $\widetilde{\al}:=q^*\d\theta$ are $C^\infty$-close. Therefore, if $(Q,\omega,\al)$ is a compact cosymplectic manifold, one finds:
\begin{enumerate}[(i)]
\item The Poisson structure defined by $(\omega ,\al)$ is homotopic to the Poisson structure defined by $(\omega,q^*\d\theta)$;
\item $q:Q\to\Ss^1$ can be realized as a symplectic mapping torus with associated cosymplectic structure $(\widetilde{\omega},q^*\d\theta)$ for a modified closed 2-form (see, e.g., \cite{L}).
\end{enumerate} 
In this sense, a compact cosymplectic structure $(Q,\omega,\al)$ is, essentially, a symplectic mapping torus. 
\end{rem}

\begin{ex}
Let $(S,\Omega )$ be a symplectic manifold and $\iota: Q\hookrightarrow S$ a submanifold.  If $X$ is a symplectic vector field everywhere transverse to $Q$, then $(\iota ^*\Omega, \iota ^*(i_X\Omega ))$ defines a cosymplectic structure on $Q$. If $f$ is a function locally defining $Q$ and such that $X(f)=1$, then the Reeb vector field is given by $E=X_f|_Q$, where $X_f$ is the hamiltonian vector field of $f$.  The associated Poisson structure $\pi_Q\in\X^2(Q)$ is
\[ \pi_Q(\beta_1,\beta_2):=\omega^{-1}(\widetilde{\beta}_1,\widetilde{\beta}_2), \quad (\beta_1,\beta_2\in T^*Q), \]
where $\widetilde{\beta}\in T^*S$ denotes the unique extension of $\beta\in T^*Q$ satisfying $\beta(X)=1$.

Every cosymplectic manifold $(Q,\omega,\al)$ can be realized as a submanifold of a symplectic manifold $(S,\Omega)$: one lets $S=Q\times\RR$ with symplectic form $\Omega=\omega+\al\wedge\d t$. 
\end{ex}


\subsection{Cosymplectic groupoids}                  %
\label{subsec:groupoids}                                       %

Let $\G$ be a Lie groupoid over $M$. We denote by $\s$ and $\t$
the source and target maps, by $\m:\G^{(2)}\to\G$ the
multiplication (defined on the space $\G^{(2)}$ of pairs of
composable arrows), by $\textbf{i}:\G\to\G$ the inverse map, and
by $\eps:M\to \G$ the identity section. Our convention for the
groupoid multiplication is such that, given two arrows $x,y\in\G$,
the product $x\cdot y:=\m(x,y)$ is defined provided $\s(x)=\t(y)$.
Also, if $m\in M$ we write $1_m:=\eps(m)$ for the unit arrow over
$m$, and if $x\in\G$ we write $x^{-1}:=\textbf{i}(x)$ for the
inverse arrow. We denote the groupoid by $\G\tto M$. 

Recall that a form $\omega\in\Omega^k(\G)$ is said to be \emph{multiplicative} if 
\begin{equation}
\label{eq:mult:symp}
\m^*\omega=\pi_1^*\omega+\pi_2^*\omega,
\end{equation}
where $\pi_i:\G^{(2)}\to\G$ are the projections on each factor.

\begin{defn}
A \emph{cosymplectic groupoid} is a triple $(\G ,\omega, \al )$ where
$\G$ is a Lie groupoid and $(\omega ,\al )$ is a cosymplectic structure
on $\G$ with $\omega$ and $\al$ multiplicative forms.
\end{defn}

The following proposition gives some basic properties of a cosymplectic groupoid. It maybe useful to recall that a multiplicative distribution in a groupoid $\G\tto M$ is a distribution $D\subset T\G$ with $\d\s(D)=\d\t(D)=D_0$ and such that $D\tto D_0$ is a subgroupoid of the tangent groupoid $T\G\tto TM$. We refer to \cite{JO14} for basic facts about multiplicative distributions.

\begin{prop}\label{prop:Reeb:right:left}
Let $(\G ,\omega ,\al )$ be a cosymplectic groupoid over $M$. Then $\dim\G=2\dim M+1$ and one has:
\begin{enumerate}[(i)]
\item $\ker\al\subset T\G$ is a multiplicative distribution;
\item the induced Poisson structure $\pi_\G\in\X^2(\G)$ is multiplicative;
\item the Reeb vector field $E\in \X(\G)$ is a bi-invariant Poisson vector field. 
\end{enumerate}
\end{prop}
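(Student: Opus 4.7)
The plan is to derive all three items from an infinitesimal analysis at the units. Two basic inputs appear repeatedly: the vanishings $\eps^*\omega = 0$ and $\eps^*\al = 0$ (immediate from $\m(1_x, 1_x) = 1_x$ and multiplicativity), and the bundle maps attached to $A \to M$,
\[
\mu\colon A \to T^*M, \quad \mu(a)(v) := \omega_{1_x}(a, \d\eps(v)), \qquad \nu := \al|_A \colon A \to \RR.
\]
Unwinding the multiplicativity identity $\m^*\omega = \pi_1^*\omega + \pi_2^*\omega$ at $(1_x, 1_x)$ together with the standard formula for $\d\m$ at a unit pair also yields the identity $\omega_{1_x}(a, b) = \mu(a)(\rho(b))$ for $a, b \in A_x$, which I use throughout.

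For the dimension, $\d\eps(T_xM) \subset T_{1_x}\G$ is isotropic inside the symplectic subspace $(\ker\al_{1_x}, \omega|_{\ker\al})$ of rank $2m$, so $\dim M \le m$. Conversely, the identities above force any $a \in \ker\mu \cap \ker\nu$ to lie in $\ker\omega_{1_x} \cap \ker\al_{1_x} = 0$, so $(\mu, \nu)_x \colon A_x \to T^*_xM \oplus \RR$ is injective; thus $\dim A_x \le \dim M + 1$, and $\dim\G = \dim M + \dim A_x = 2\dim M + 1$.

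For (i), multiplicativity of $\al$ gives $\al(\d\m(X_1, X_2)) = \al(X_1) + \al(X_2)$ on composable pairs, so $\ker\al$ is closed under tangent-groupoid multiplication; integrability follows from $\d\al = 0$. The only non-trivial point is surjectivity of $\d\s, \d\t \colon \ker\al \to TM$, which reduces (via right/left invariance of $\al$ on source- and target-fibers, itself a direct consequence of multiplicativity) to $\nu \ne 0$ at each unit; this in turn follows from the injectivity of $(\mu,\nu)$ combined with $\dim A_x > \dim M$. Item (ii) is then immediate: $\pi_\G$ is built from the multiplicative 2-form $\omega$ restricted to the multiplicative distribution $\ker\al$, so multiplicativity of $\pi_\G$ follows from multiplicativity of its defining data.

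The main obstacle is (iii). The strategy is to show that the right-invariant extension $\der{E}$ of $E|_M$ agrees with $E$. First, the two identities for $\omega$ at units together with surjectivity of $\mu$ (from bijectivity of $(\mu,\nu)$) force $\ker\omega_{1_x} \subset A_x$, and the IM-antisymmetry $\mu(a)(\rho(b)) = -\mu(b)(\rho(a))$ (a direct consequence of the identity above) then gives $\ker\omega_{1_x} \subset \ker\rho$; in particular $E_{1_x}$ is a section of the isotropy. Next, unwinding $\m^*\omega = \pi_1^*\omega + \pi_2^*\omega$ at $(1_{\t(g)}, g)$ yields the key identity
\[
\omega_g(Y, \d R_g\, a) = -\mu(a)(\d\t\, Y) \qquad (a \in A_{\t(g)},\ Y \in T_g\G).
\]
Applied with $a = E_{1_{\t(g)}}$, for which $\mu(a) = 0$, this shows $\der{E}_g \in \ker\omega_g$ for all $g$, while right-invariance of $\al$ on $\s$-fibers gives $\al(\der{E}) \equiv 1$. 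By uniqueness of the Reeb vector field, $E = \der{E}$; the mirror argument with left translations yields left-invariance, hence bi-invariance. Finally, $E$ is Poisson because $\Lie_E\omega = 0$ and $\Lie_E\al = 0$ hold for any cosymplectic structure.
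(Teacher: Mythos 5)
Your proof is correct and follows essentially the same route as the paper: the dimension count is linear algebra at the units, (i) and (ii) come from general multiplicativity of $\ker\al$ and of the defining data of $\pi_\G$, and bi-invariance of $E$ is obtained exactly as in the paper by checking that the right- (resp.\ left-) invariant extension of $E|_M$ satisfies $i_{\der{E}}\omega=0$ and $\al(\der{E})=1$ and invoking uniqueness of the Reeb field. The only difference is cosmetic: where the paper cites the fact that kernels of multiplicative forms are multiplicative distributions, you re-derive the needed identities (such as $\omega_g(Y,\d R_g a)=-\mu(a)(\d\t\, Y)$) directly from $\m^*\omega=\pi_1^*\omega+\pi_2^*\omega$.
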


\begin{rem}
One can define multiplicative $k$-vector fields for any natural number $k$ \cite{BuCa}. We will only be interested in the case of a bi-vector field $\pi_\G\in\X^2(\G)$ for which multiplicativity amounts to requiring the map $\pi_\G^\sharp:T^*\G\to T\G$ to be a Lie groupoid morphism \cite{Mc2}. This is the analogue of the fact that a 2-form $\omega\in\Omega^2(\G)$ is multiplicative if and only if the map $\omega^\flat:T\G\to T^*\G$ is a Lie groupoid morphism. 
\end{rem}

\begin{proof}
It follows easily from the multiplicativity condition that for a cosymplectic groupoid $(\G ,\omega, \al )$ one has
\[ \textbf{i}^*\omega=-\omega, \quad \textbf{i}^*\al=-\al,\quad \textbf{i}_*E=-E,\]
where $E$ denotes the Reeb vector field. In particular, the pull backs of $\omega$ and $\al$ along the identity section vanish and the Reeb vector field is transverse to it. It follows that the subspace generated by the tangent space to the identity section and the Reeb vector field is maximally isotropic for $\omega$. Since $\omega$ has corank 1, we conclude that for a cosymplectic groupoid $\G\tto M$ one must have $\dim\G=2\dim M+1$. 

The kernel of a multiplicative form is a multiplicative distribution (see {{\cite{CrSaStru}}}). Hence (i) follows. Since $\im(\pi_\G)^\sharp=\ker\al$ and 
the symplectic forms on the leaves of $\pi_\G$ are obtained by restricting the multiplicative form $\omega$, we must have $\pi_\G$ multiplicative and (ii) also follows.

From the multiplicativity of $\omega$ we also have that
\[ D:=\langle E\rangle=\ker\omega, \] 
is a multiplicative distribution, i.e., $D$ is a subgroupoid of $T\G$. Since $E$ is transverse to the identity section, we find that
$D_0=D\cap TM=0_M$, i.e., we have $D\subset\ker\d \s\cap\ker\d \t$. This means that $D$ is a distribution which is both left and right invariant. In order to conclude that $E$ itself is both left and right invariant, notice that the right invariant vector field
\[ \der{e}:\G\to T\G,\quad g\mapsto d_{1_{\s(g)}} R_g (E_{1_{\s(g)}}),\] 
takes values in $D$, so $i_{\der{e}}\omega=0$. On the other hand, using the multiplicativity of $\al$, we obtain
\begin{align*}
\al _{g}(\der{e})&=\al _{g} (\d \m(E_{1_{\s(g)}},0_g))\\
  &=\al _{1_x} (E_{1_{\s(g)}})+\al_{g}(0_g)=1.
\end{align*}
Hence, by uniqueness, we must have $E=\der{e}$. Similarly, we find that $E=\esq{e}$, so $E$ is both left and right invariant and (iii) follows.
\end{proof}

Let $(\G,\omega,\al)$ be a cosymplectic groupoid with underlying multiplicative foliation $\F_{\G}=\ker\al$. From \eqref{eq:mult:symp}
one can prove that $\epsilon ^*\alpha=0$, which means 
that the identity section is tangent to this foliation, i.e.,
\[ TM\subset \ker\al. \]
Since we assume that $M$ is connected, it follows that it is contained in a single symplectic leaf $\Sigma^0$ of $\F_\G$. In general, this leaf is only an immersed submanifold. Still, denoting its symplectic form by $\omega_{\Sigma^0}=\omega|_{\Sigma^0}$, we have:

\begin{prop}
\label{prop:symplectic:leaf}
Let $(\G,\omega,\al)$ be a cosymplectic groupoid. Then $\Sigma^0$ is a Lie subgroupoid of $\G\tto M$ and $(\Sigma^0, \omega_{\Sigma^0})\tto M$ is a symplectic groupoid integrating $(M,\pi_M)$. In particular, $(M,\pi_M)$ is an integrable Poisson manifold.
\end{prop}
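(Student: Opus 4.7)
The plan is to prove in sequence that $\Sigma^0$ contains the identity section, is closed under inversion and multiplication, and finally carries a symplectic groupoid structure integrating $(M,\pi_M)$.

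First, the multiplicativity identity $\m^*\al=\pi_1^*\al+\pi_2^*\al$ pulled back along the diagonal $x\mapsto(1_x,1_x)$ yields $\eps^*\al=2\eps^*\al$, hence $\eps^*\al=0$ and $TM\subset\ker\al$. Since $M$ is connected, it lies in a single leaf $\Sigma^0$. Pulling back instead along $g\mapsto(g,g^{-1})$ gives $\iota^*\al=-\al$, so inversion preserves the distribution $\ker\al$ and fixes $M$, whence $\iota(\Sigma^0)=\Sigma^0$.

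Closure under multiplication is the heart of the argument. The key observation is that for any composable pair $(v,w)\in T\G\timesst T\G$ with $v\in\ker\al_g$ and $w\in\ker\al_h$, multiplicativity of $\al$ gives
\[\al(d\m(v,w))=\al(v)+\al(w)=0,\]
so on the fiber product $\Sigma^0\timesst\Sigma^0$ the multiplication $\m$ is tangent to the foliation $\ker\al$. To conclude that $\m(g,h)\in\Sigma^0$, one needs a path in $\Sigma^0\timesst\Sigma^0$ from $(g,h)$ to a unit $(1_x,1_x)$; applying $\m$ to such a path yields a curve tangent to $\ker\al$ joining $gh$ to $1_x\in\Sigma^0$, forcing $gh\in\Sigma^0$. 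Both $\s$ and $\t$ restrict to submersions $\Sigma^0\to M$ (since $E\in\ker d\s\setminus\ker\al$ gives $d\s(\ker\al)=TM$ pointwise, and symmetrically for $\t$), so $\Sigma^0\timesst\Sigma^0$ is a smooth manifold, and one may explicitly construct the required path by concatenating $(\gamma(t),1_{\s(\gamma(t))})$ with a path of the form $(g,\delta(t))$ inside the $\t$-fiber $(\t|_{\Sigma^0})^{-1}(\s(g))$.

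With $\Sigma^0\tto M$ established as an immersed Lie subgroupoid, $\omega|_{\Sigma^0}$ is automatically closed and multiplicative, and by the construction of $\pi_\G$ from the cosymplectic pair it coincides with the leafwise symplectic form; hence $(\Sigma^0,\omega|_{\Sigma^0})\tto M$ is a symplectic groupoid. Its induced base Poisson structure is $\pi_M$, because $\pi_\G$ restricts to $(\omega|_{\Sigma^0})^{-1}$ on the symplectic leaf $\Sigma^0$ and $\s:(\G,\pi_\G)\to(M,\pi_M)$ is Poisson by construction. The main obstacle I anticipate is producing the connecting path $\delta$ inside the $\t$-fiber of $\Sigma^0$, which ultimately depends on connectedness properties of the integration; a cleaner alternative would be to identify $\Sigma^0$ with the Lie subgroupoid integrating the subalgebroid $\ker\nu\subset A$ associated to the IM 1-form $\nu$ corresponding to $\al$, bypassing the direct path argument entirely.
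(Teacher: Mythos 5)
Your argument is correct and follows essentially the same route as the paper: $\eps^*\al=0$ places $M$ in a single leaf $\Sigma^0$, inversion and multiplication preserve the multiplicative foliation $\ker\al$, and restricting $\omega$ gives a multiplicative symplectic form whose target map is Poisson onto $(M,\pi_M)$. The connectivity point you flag for closure under multiplication is precisely what the paper handles by asserting that $\Sigma^0\timesst\Sigma^0$ is a single leaf of $\F_{\G}\timesst\F_{\G}$ containing all pairs $(1_x,1_x)$ — the same fact in different clothing — so your explicit path construction (or your proposed alternative via integrating the subalgebroid $\ker\nu\subset A$, which is essentially how the paper reproves integrability of $(M,\pi_M)$ later on) is consistent with the paper's treatment.
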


\begin{proof}
The condition that $\F_{\G}$ is multiplicative amounts to the identities
\begin{equation}
\label{eq:multiplicative:distr} 
\textbf{i}_*\F_{\G}=\F_{\G},\quad \m_*(\F_{\G}\timesst \F_{\G})=\F_{\G}, 
\end{equation}
where $\F_{\G}\timesst \F_{\G}=T\G^{(2)}\cap (\F\times\F)$ (note that since $\F+\ker\d\s=\F+\ker\d\t$ this intersection is transverse). Since $M$ is contained in the leaf $\Sigma^0$, it follows that the restriction of $\s$ and $\t$ to $\Sigma^0$ are surjective submersions. 

Now observe that inversion fixes the identity section, so the first condition in \eqref{eq:multiplicative:distr} implies that inversion maps leaves of $\F_{\G}$ to leaves of $\F_{\G}$. Since inversion fixes the identity section, it follows that it maps $\Sigma^0$ into itself. 

Similarly, the second condition in \eqref{eq:multiplicative:distr} implies that multiplication maps leaves of $\F_{\G}^{(2)}:=\F_{\G}\timesst \F_{\G}$ to leaves of $\F_{\G}$. Since $\Sigma^0\timesst \Sigma^0$ is a leaf of $\F_{\G}^{(2)}$ containing all the pairs $(1_x,1_x)$, it follows that $\Sigma^0$ is closed under multiplication.

Therefore we have smooth maps $\textbf{i}:\Sigma^0\to \G$ and $\m:\Sigma^0\timesst\Sigma^0\to\G$ with image lying in $\Sigma^0$. The fact that these are also smooth as maps into $\Sigma^0$ follows from the general fact that leaves of foliations are regularly immersed submanifolds.

The inclusion $\Sigma^0\hookrightarrow \G$ is a groupoid morphism, so obviously $ \omega_{\Sigma^0}:=\omega|_{\Sigma^0}$ is a multiplicative symplectic form, so $(\Sigma^0, \omega_{\Sigma^0})\tto M$ is a symplectic groupoid. The composition of this inclusion with the target of $\G$ gives a Poisson map 
\[ \t:(\Sigma^0, \omega_{\Sigma^0})\to (M,\pi_M),\] 
so this symplectic groupoid integrates $(M,\pi _M)$.
\end{proof}

\subsection{Examples of cosymplectic groupoids}                                        %
\label{subsec:groupoids:examples}                                       %
The basic examples of cosymplectic structures mentioned in Section \ref{sec:background}  all have multiplicative versions, yielding examples of cosymplectic groupoids.

\begin{ex}
\label{ex:trivial:product}
Let $(\Sigma,\Omega_\Sigma)$ be a symplectic groupoid over $M$. Then we can form the trivial abelian extension
\[ \xymatrix{1\ar[r]& M\times\RR\ar[r] & \Sigma\times\RR \ar[r] & \Sigma \ar[r] &1}\]
and equip $\G=\Sigma\times\RR$ with the cosymplectic structure $(\pr_\Sigma^*\Omega_\Sigma,\pr_\RR^*\d t)$, where $t$ denotes the coordinate in the second factor. This gives a cosymplectic groupoid, called the \emph{trivial central extension} of the symplectic groupoid $(\Sigma,\Omega_\Sigma)$ by $\RR$. 

A similar construction holds with $\RR$ replaced by $\Ss^1$ and $\d\theta$ instead of $\d t$. More generally, one can
consider a central extension of Lie groupoids with trivial kernel
\[ \xymatrix{1\ar[r]& M\times\Ss^1\ar[r] & \G\ar[r] & \Sigma \ar[r] &1} \]
where $(\Sigma,\Omega_\Sigma)$ a symplectic groupoid. A multiplicative Ehresmann connection for this extension is specified by a multiplicative 1-form $\al\in\Omega^1(\G,\RR)$ (see \cite{FM22,LSX09}). This connection is flat if and only if the form is closed, and will see in Section \ref{sec:oversymplectic} that in this case we obtain a multiplicative cosymplectic structure $(\pr_\Sigma^*\Omega_\Sigma,\al)$ in $\G$.
\end{ex}

\begin{ex}
\label{ex:mapping:torus}
Let $(\Sigma \tto M ,\omega )$ be a symplectic groupoid and $\varphi:\Sigma\to\Sigma$ a symplectomorphism satisfying
\begin{align*}
\s \circ \varphi &= \s, \qquad \t \circ \varphi  = \t, \\
\varphi ( gh) &= g\varphi  (h)=\varphi (g)h, \qquad (g,h)\in \Sigma ^{(2)}.
\end{align*}
for all $(g,h)\in \Sigma ^{(2)}$. Notice that these properties are satisfied by the time-one map of any bi-invariant vector field on a Lie groupoid (e.g., the Reeb vector field of a cosymplectic groupoid). These properties ensure that the map
\[
M\times \Zz\hookrightarrow \Sigma\times\RR,\quad (x,n)\mapsto (\varphi^n(1_x),n),
\]
make the trivial bundle of groups $M\times\Zz\tto M$ a closed, normal, subgroupoid inside the isotropy of the direct product groupoid $\Sigma\times\RR\tto M$. It follows that the mapping torus
\[ \Sigma\times_\varphi\Ss^1:= (\Sigma \times \RR)/\Zz,  \]
has a unique Lie groupoid structure making the following sequence of Lie groupoids exact
\[ \xymatrix{ 1\ar[r] & M\times \Zz\ar[r] & \Sigma\times\RR\ar[r] &  \Sigma\times_\varphi\Ss^1 \ar[r] & 1}.\]
Since $\Sigma\times_\varphi\Ss^1$ is a symplectic mapping torus, it has a cosymplectic structure which one checks is multiplicative. Hence, it is a cosymplectic groupoid. Notice that the map
\[  \Sigma\times_\varphi\Ss^1 \to \Ss^1,\quad  \left[g,t \right]\mapsto e^{2\pi i t}, \]
is a fibration of Lie groupoids.
\end{ex}

\begin{ex}
Let $(\Sigma,\Omega)$ be a symplectic groupoid over $M$ and let $\iota:\G\hookrightarrow \Sigma$ be a Lie subgroupoid. Assume that there exists a multiplicative symplectic vector field $X\in\X(\Sigma)$ transverse to $\G$. Then $\omega:=\iota^*\Omega$ and $\al:=\iota^*(i_X\Omega)$ define a multiplicative cosymplectic structure on the groupoid $\G$. 

Conversely, given a cosymplectic groupoid $(\G,\omega,\alpha)$ we can form a symplectic groupoid $(\Sigma,\Omega)$ so that $(\G,\omega,\alpha)$ is obtained from $(\Sigma,\Omega)$. We let $\Sigma$ be the product of the groupoid $\G\tto M$ with the identity groupoid $\RR\tto\RR$, so $\Sigma$ is a groupoid with space of arrows $\G\times\RR$ and space of objects $M\times\RR$. The symplectic form on $\Sigma$ is given by $\Omega=\omega+\alpha\wedge\d t$. One checks easily that $\Omega$ is multiplicative and that $\frac{\partial}{\partial t}$ is a multiplicative symplectic vector field transverse to $\G\times \{0\}\cong \G$.
\end{ex}

\begin{ex} 
\label{ex:not:extension}
For a concrete example of a cosymplectic groupoid which is not a central extension, let $\Sigma=\Tt^n\times\RR^n\tto\RR^n$ be the trivial bundle of Lie groups with fiber the torus $\Tt^n$. Denoting by $(\theta^1,\dots,\theta^n)$ angle coordinates on the torus and $(x^1,\dots,x^n)$ linear coordinates on $\RR^n$, we let $\Omega:=\sum_{i=1}^n \d \theta^i\wedge\d x^i$.
This is a multiplicative form so $(\Sigma,\Omega)$ is a symplectic groupoid. Now fix some $a=(a_1,\dots,a_n)\in\RR^n$ with $||a||=1$. The vector field
\[ X:=\sum_{i=1}^n a_i\frac{\partial}{\partial x^i}, \]
is a symplectic vector field transverse to the subgroupoid $\G=\Tt^n\times M\tto M$ where $M$ is the hyperplane
\[ M=\{(x^1,\dots,x^n)\in\RR^n: \sum_{i=1}^n a_i x^i=0\}. \]
Moreover, $X$ is multiplicative since its flow is a 1-parameter group of automorphisms of $\Sigma$. Hence, we are in the situation of the previous example, so we obtain a cosymplectic groupoid $(\G,\omega,\alpha)$. In particular, we find that $\alpha=-\sum_{i=1}^n a_i\d\theta^i$.
The hamiltonian vector field $X_f$ associated with the function $f(x,\theta)=\sum_{i=1}^n a_i x^i$ satisfies $i_{X_f}\omega=\d f|_{T\G}=0$ and 
$\alpha(X_f)=-\sum_{i=1}^n a_i^2=-1$. Hence, we conclude that the Reeb vector field is
\[ E=-X_f|_\G=-\sum_{i=1}^n a_i\frac{\partial}{\partial \theta^i}. \]
It follows that the orbit space of $E$ is smooth if and only if $\Zz a$ defines a discrete subgroup of $\Tt^n$, i.e., if and only if the ratios $a_i:a_j$ are all rational. Therefore this yields examples of cosymplectic groupoids which are not central extensions.
\end{ex}

\subsection{Central extensions and cosymplectic groupoids}                                        %
\label{subsec:central:extension:groupoids}                                       %

Let $(\G ,\omega ,\al )$ be a cosymplectic groupoid over $M$. Let us denote by $\K\subset \G$ the collection of all orbits of the Reeb vector field $E$ which intersect the identity section of $\G$. We call $\K$ the \emph{kernel of the cosymplectic groupoid} $(\G ,\omega ,\al )$.

\begin{thm}
The kernel $\K$ of a cosymplectic groupoid $(\G ,\omega ,\al )$ is a bundle of abelian groups which fits into a short exact sequence of topological groupoids over the same base
\begin{equation}
\label{eq:seq:cosymp:grpd:2}
\xymatrix{1\ar[r]& \K\ar[r] & \G\ar[r] & \Sigma \ar[r] &1}
\end{equation}
where $\Sigma$ is the orbit space of the Reeb vector field. When this space is smooth, this is a short exact sequence of Lie groupoids and $(\Sigma,\Omega)$ is a symplectic groupoid for a unique symplectic structure making the projection $\G\to\Sigma$ a Poisson map.
\end{thm}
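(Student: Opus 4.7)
The plan is to leverage the bi-invariance of the Reeb vector field $E$ established in Proposition \ref{prop:Reeb:right:left} to describe both $\K$ and $\Sigma$.

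First, I would analyze $\K$. Bi-invariance of $E$ forces it to be tangent to the isotropy fibers $\G_x^x$, so its orbit through $1_x$ lies in $\G_x^x$. Using right-invariance, which means the flow satisfies $\phi_t^E(hg)=\phi_t^E(h)\cdot g$ whenever composable, one computes
\[ \phi_t^E(1_x)\cdot \phi_s^E(1_x)=\phi_t^E\bigl(\phi_s^E(1_x)\bigr)=\phi_{t+s}^E(1_x), \]
so $\K_x=\{\phi_t^E(1_x):t\in\RR\}$ is a 1-parameter subgroup of $\G_x^x$, hence abelian. Since $E|_M$ is a nowhere-vanishing section of the isotropy Lie algebra bundle, this endows $\K$ with the structure of a bundle of abelian groups over $M$.

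Next, I would topologize $\Sigma:=\G/\!\sim$, where $g\sim g'$ iff $g'=\phi_t^E(g)$ for some $t$, with the quotient topology. The key point is that bi-invariance makes $\sim$ a groupoid congruence: if $g'=\phi_t^E(g)$ and $h'=\phi_s^E(h)$ are composable with $g,h$, combining right- and left-invariance yields
\[ g'\cdot h' =\phi_t^E(g)\cdot \phi_s^E(h)= \phi_{t+s}^E(gh)\sim gh, \]
and inversion, source, and target descend similarly. Since $E$ is transverse to the identity section, $M\hookrightarrow\G\to\Sigma$ is injective, giving $\Sigma$ the structure of a topological groupoid over $M$. Exactness of \eqref{eq:seq:cosymp:grpd:2} is then tautological: $q:\G\to\Sigma$ is surjective by construction, and the preimage of the unit section consists precisely of those arrows on the $E$-orbit of some $1_x$, i.e.\ $\K$.

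When $\Sigma$ is smooth, $q$ is a surjective submersion with $\ker\d q=\langle E\rangle=\ker\omega$, so $\omega$ descends to a unique closed 2-form $\Omega$ on $\Sigma$ satisfying $q^*\Omega=\omega$; since $\dim\Sigma=2m$ and $\Omega$ has rank $2m$, it is symplectic, and its multiplicativity follows from that of $\omega$ together with the surjectivity of $q^{(2)}:\G^{(2)}\to\Sigma^{(2)}$. Finally, because $\ker\d q\subset\ker\al=\im\pi_\G^\sharp$ and the leafwise symplectic form of $\pi_\G$ is the restriction of $\omega$, the map $q$ carries $\pi_\G$ to $\Omega^{-1}$, making it Poisson. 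I expect the main obstacle to be verifying that the orbit equivalence is genuinely a groupoid congruence and that the quotient is a Hausdorff topological groupoid; this rests decisively on the bi-invariance of $E$, after which the remaining claims reduce to standard descent arguments along the smooth submersion $q$.
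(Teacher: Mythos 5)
Your proposal is correct and follows essentially the same route as the paper: bi-invariance of $E$ makes $\K$ a bundle of (abelian) groups in the isotropy, the orbit equivalence is a groupoid congruence so $\Sigma=\G/\K$ is a topological groupoid, and in the smooth case $\omega$ descends to the unique multiplicative symplectic form with $q^*\Omega=\omega$. One small slip in your last paragraph: $\ker\d q=\langle E\rangle$ is \emph{not} contained in $\ker\al$, since $\al(E)=1$; what you need is that $\langle E\rangle$ is \emph{transverse} to $\ker\al=\im\pi_\G^\sharp$, so that $q$ restricts to a local symplectomorphism on each leaf of $\ker\al$, which is what makes $q$ a Poisson map.
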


\begin{proof}
Since $E$ is non-vanishing and transverse to the identity section, it follows that $\K$ is a submanifold of $\G$. By Proposition \ref{prop:Reeb:right:left}, since $E$ is bi-invariant it follows that $\K$ is a Lie subgroupoid of $\G$, which is actually a bundle of Lie groups contained in the isotropy of $\G$. One can form the quotient groupoid $\Sigma=\G/\K$, which is a topological groupoid, giving the short exact sequence \eqref{eq:seq:cosymp:grpd:2}.

Notice that the quotient groupoid $\Sigma=\G/\K$ can be identified with the space of orbits of the $\RR$-action defined by the Reeb vector field. When this orbit space is a smooth manifold, the form $\omega$ is basic for the $\RR$-action so there is a unique symplectic 2-form $\Omega$ in $\Sigma$ such $p^*\Omega=\omega$, where $p:\G\to\Sigma$ is the projection. Since $\omega$ is  multiplicative, it follows that $\Omega$ is also multiplicative, so $(\Sigma,\Omega)$ is a symplectic groupoid. One checks easily that $p:(\G,\pi_\G)\to (\Sigma,\Omega)$ is a Poisson map. Since $p$ is a submersion, it follows that $\Omega$ is the unique symplectic form with this property. 
\end{proof}

We will call a cosymplectic groupoid $(\G ,\omega ,\al )$ a \emph{central extension Lie groupoid} whenever the orbit space of the Reeb vector field is smooth, so $\G$ fits into a short exact sequence of Lie groupoids with abelian one-dimensional kernel.

\subsection{Proper cosymplectic groupoids}
{
Recall that a Lie groupoid $\G\tto M$ is called \emph{proper} if its space of arrows is Hausdorff and the map $(\s,\t):\G\to M\times M$ is proper. In this sections we restrict our attention to proper cosymplectic groupoids. 

\begin{ex}
The cosymplectic groupoids arising as symplectic mapping torus, as in Example \ref{ex:mapping:torus}, are proper whenever one starts with a proper symplectic groupoid $(\Sigma \tto M ,\omega )$. In this case, the symplectic leaves of the resulting cosymplectic groupoid $(\G,\omega,\al)$ are the fibers of $q:\G\to \Ss^1$, hence are embedded submanifolds.

The cosymplectic groupoid $(\G,\omega,\al)$ constructed in Example \ref{ex:not:extension}, which is not a central extension, is also proper, being a bundle of compact Lie groups. However, in this example the symplectic leaf through the identity section $\Sigma^0$ is not embedded.
\end{ex}

It turns out that a proper cosymplectic groupoid is a symplectic mapping torus, as in Example  \ref{ex:mapping:torus}, if and only if  the symplectic leaf $\Sigma^0$ containing the identity section is embedded.   

\begin{thm}
Let $(\G,\omega,\al)$ be a proper, source connected, cosymplectic groupoid and assume that the symplectic leaf $\Sigma^0\subset \G$ is embedded. Then:
\begin{enumerate}[(i)]
\item there is a time $t_0$ such that the flow of the Reeb vector field at time $t_0$ maps $\Sigma^0$ to itself yielding a symplectomorphism
\[ \varphi:=\varphi^{t_0}_E:\Sigma^0\to\Sigma^0. \]
 \item $\G$ is isomorphic to the symplectic mapping torus $\Sigma^0\times_\varphi\Ss^1$ and the resulting submersion
\[  q:\G\to \Ss^1, \]
is a fibration of Lie groupoids.
\end{enumerate}
\end{thm}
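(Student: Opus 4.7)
The plan is to exhibit a common minimal positive time $t_0>0$ after which $\varphi^{t_0}_E$ sends $\Sigma^0$ onto itself, so that $\varphi:=\varphi^{t_0}_E|_{\Sigma^0}$ places us in the setting of Example \ref{ex:mapping:torus}. Both the mapping torus isomorphism and the fibration $q:\G\to\Ss^1$ will then fall out of the single map
\[
\Psi:\Sigma^0\times\RR\to\G,\qquad (\sigma,t)\mapsto\varphi^t_E(\sigma),
\]
which is defined for all $t$ since $E$ is complete by Proposition \ref{prop:Reeb:right:left}.

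The crux is periodicity of the Reeb orbit $\gamma_x(t):=\varphi^t_E(1_x)$. Bi-invariance places $E_{1_x}$ in the center of the isotropy Lie algebra $\gg_x$, while properness makes $\G_x^x$ compact, so $T_x:=\overline{\gamma_x(\RR)}$ is a subtorus of $Z(\G_x^x)_0$. Since $\al(E_{1_x})=1$, the subspace $\tt_x\cap\ker\al$ has codimension one in $\tt_x$, and the corresponding connected subgroup $H\subset T_x$ lies in $\Sigma^0$ (its tangent vectors are annihilated by $\al$ and left translations preserve the multiplicative distribution $\ker\al$). The embeddedness of $\Sigma^0$ forces $H$ to be closed in $T_x$: if $\overline{H}$ were a strictly larger subtorus contained in $\Sigma^0$, its Lie algebra would have to lie in $\ker\al\cap\tt_x$, contradicting the dimension count. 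Hence $T'_x:=H$ is a codimension-one subtorus, $T_x/T'_x\cong\Ss^1$, and $\gamma_x$ traverses this circle at constant $\al$-speed $1$, giving $\gamma_x(t_0(x))\in T'_x\subset\Sigma^0$ for some $t_0(x)>0$. Bi-invariance gives $\gamma_y(t)=g\gamma_x(t)g^{-1}$ for any arrow $g:x\to y$, which together with source-connectedness of $\G$ and connectedness of $M$ forces $t_0(x)=t_0$ to be constant. This periodicity step is the main obstacle: it is precisely here that both properness (compact isotropy) and embeddedness of $\Sigma^0$ (closedness of $T'_x$) are needed, in contrast to Example \ref{ex:not:extension}.

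With $t_0$ constant, $\varphi^{t_0}_E$ is a Poisson diffeomorphism sending each $1_x$ into $\Sigma^0$, and the disjointness of symplectic leaves then forces $\varphi^{t_0}_E(\Sigma^0)=\Sigma^0$; the restriction $\varphi$ is a symplectomorphism of $(\Sigma^0,\omega|_{\Sigma^0})$ satisfying $\s\circ\varphi=\s$, $\t\circ\varphi=\t$, and $\varphi(gh)=g\varphi(h)=\varphi(g)h$ (the last from bi-invariance of $E$), matching the hypotheses of Example \ref{ex:mapping:torus}. Using the identity $\varphi^t_E(\sigma)=\sigma\cdot\gamma_{\s(\sigma)}(t)$ and bi-invariance, a direct computation gives $\Psi(\sigma_1,t_1)\cdot\Psi(\sigma_2,t_2)=\sigma_1\sigma_2\cdot\gamma_{\s(\sigma_2)}(t_1+t_2)=\Psi(\sigma_1\sigma_2,t_1+t_2)$, so $\Psi$ is a Lie groupoid morphism from the direct product $\Sigma^0\times\RR$. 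Transversality of $E$ to $\Sigma^0$ makes $\Psi$ a local diffeomorphism, and its image is both open (local diffeomorphism) and closed (any convergent sequence $\Psi(\sigma_n,t_n)\to g$ can be normalized to $t_n\in[0,t_0]$, and properness together with embeddedness of $\Sigma^0$ then forces the limit to lie in the image); hence $\Psi$ is surjective onto the connected manifold $\G$. Since its fibers are exactly the $\Zz$-orbits of $n\cdot(\sigma,t)=(\varphi^{-n}(\sigma),t+nt_0)$, $\Psi$ descends to a Lie groupoid isomorphism $\Sigma^0\times_\varphi\Ss^1\diffto\G$, and composing its inverse with the second-factor projection yields the required Lie groupoid fibration $q:\G\to\Ss^1$.
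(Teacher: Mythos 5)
Your proposal is correct in its essentials and shares the paper's overall architecture: both proofs hinge on the same morphism $\Sigma^0\times\RR\to\G$, $(\sigma,t)\mapsto\varphi^t_E(\sigma)$, show it is a groupoid morphism and a surjective local diffeomorphism, and identify its kernel with a copy of $M\times\Zz$ to exhibit $\G$ as a mapping torus. Where you genuinely diverge is the key periodicity step. The paper first proves surjectivity of this map and then argues that its restriction $\RR\to\G_x$, $t\mapsto\varphi^t_E(1_x)$, has closed, hence compact, image in the isotropy group, from which the first return time $t_0$ is extracted; constancy of $t_0$ in $x$ then comes from the fact that $\varphi^t_E$ permutes the leaves of $\ker\al$. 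You instead establish periodicity up front by a more group-theoretic analysis: the closure $T_x$ of the Reeb orbit is a torus in the compact isotropy group, the connected subgroup $H$ integrating $\tt_x\cap\ker\al$ lies in $\Sigma^0$, embeddedness forces $H$ to be a closed codimension-one subtorus, and the induced circle $T_x/H$ produces the return time. This is arguably more transparent than the paper's bare assertion that the orbit image is closed, and it isolates exactly where embeddedness enters (compare Example \ref{ex:not:extension}, where $H$ is dense). Note that your constancy-of-$t_0$ argument via conjugation is redundant and slightly incomplete (it needs the extra check that conjugation preserves $\Sigma^0$): once $\varphi^{t_0}_E(1_x)\in\Sigma^0$ for a single $x$, the fact that $\varphi^{t_0}_E$ maps leaves to leaves already gives $\varphi^{t_0}_E(\Sigma^0)=\Sigma^0$ and hence the same $t_0$ for all $x$, which is the route the paper takes.

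Two small gaps remain. First, your closedness-of-image argument reduces to the claim that a limit in $\G$ of points of $\Sigma^0$ again lies in $\Sigma^0$; embeddedness alone only makes $\Sigma^0$ locally closed, so a sentence of justification is needed (for instance, that $\Sigma^0\cap\G_x$ is a locally closed subgroup of the compact isotropy group, hence closed, combined with properness of $(\s,\t)$) --- though the paper is equally terse at the corresponding point. Second, and more substantively, the theorem asserts an isomorphism with the \emph{symplectic} mapping torus: after the groupoid isomorphism one must still verify that the map pulls back $(\omega,\al)$ to the forms $(\pr^*_{\Sigma^0}\omega_{\Sigma^0},\pr^*_\RR\d t)$ descending to the mapping torus. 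The paper devotes the last part of its proof to this, using $i_E\omega=0$, $i_E\al=1$ and closedness of both forms; your proposal only asserts that the hypotheses of Example \ref{ex:mapping:torus} are met, which equips the mapping torus with \emph{a} cosymplectic structure but does not identify it with the given one. The verification is routine, but it is part of the statement and should be included.
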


\begin{proof}
The Reeb vector field is a complete Poisson vector field transverse to the sympletic leaves of $(\G,\pi_\G)$. Hence, for each fix $t$, its flow $\varphi^t_E:\G\to\G$ maps leaves to leaves. We claim that there exists some smallest $t_0>0$ such that $\varphi^{t_0}_E(\Sigma^0)=\Sigma^0$.

Since the Reeb vector field $E$ is both left and right invariant, it satisfies
\begin{align*}
\s \circ \varphi^t_E &= \s, \qquad \t \circ \varphi^t_E  = \t, \\
\varphi^t_E ( gh) &= g\varphi^t_E  (h)=\varphi^t_E (g)h, \qquad ((g,h)\in \Sigma ^{(2)}).
\end{align*}
It follows that the map
\begin{equation}
\label{eq:morphism:proper:thm}
\Phi:\Sigma^0\times \RR\to \G,\quad (g,t)\mapsto \varphi^t_E(g). 
\end{equation}
is a Lie groupoid morphism. Since $\Sigma^0$ is embedded, this map is also a local diffeomorphism and the image of $\Phi$ is open and closed in $\G$. Since $M$ is connected and $\G$ is source connected, we have that $\G$ is connected, so $\Phi$ is surjective. 

Now observe that, for each $x\in M$, the map $\Phi$ restricts to a Lie group map 
\[ \Phi_x:\RR\to \G_x, \quad t\mapsto \Phi(1_x,t), \]
whose image is closed. Since $\G$ is proper, isotropy groups are compact, so the image of this map is compact. Hence, there exists a first time $t_0>0$ such that
\[ \Phi(1_x,t_0)\in\Sigma^0\cap\G_x. \]
Since, for each $t$, $\varphi^t_E:\G\to\G$ maps leaves to leaves we conclude that 
\[ \varphi^{t_0}_E(\Sigma^0)=\Sigma^0, \]
and $t_0$ is the smallest positive real satisfying this property, proving our claim. 

\begin{lem}
\label{lem:aux:proper}
The morphism \eqref{eq:morphism:proper:thm} yields a short exact sequence of Lie groupoids:
\[ \xymatrix{ 1\ar[r] & M\times \Zz\ar[r] & \Sigma^0\times\RR\ar[r]^{\Phi} & \G \ar[r] & 1}\]
where the first map is $(x,n)\mapsto(\varphi^{nt_0}_E(1_x),-nt_0)$. In particular, the groupoid $\G$ is isomorphic to a mapping torus:
\[ \G\simeq  (\Sigma^0\times\RR)/\Zz, \]
where the $\Zz$-action is generated by $(g,t)\mapsto (\varphi^{t_0}_E(g),t-t_0)$.
\end{lem}

Assuming this lemma, it remains to prove is that $\Phi$ pulls back the cosymplectic structure $(\omega,\al)$ to the cosymplectic structure $(\pr^*_{\Sigma^0}\omega_{\Sigma^0},\pr^*_\RR\d t)$.
This follows because:
\begin{enumerate}[(a)]
\item $\Phi$ is a map of the underlying foliations;
\item The Reeb vector fields $\partial_t$ and $E$ are $\Phi$-related
\[ \d_{(g,t)}\Phi(\partial_t)=\left.\frac{\d}{\d s}\right|_{s=t}\varphi^{s}_E(g)=E|_{\Phi(g,t)}. \]
\end{enumerate}

In fact, from (b), we find that  
\[ i_{\partial_t}\Phi^*\omega=\Phi^* i_E\omega=0. \]
Since $\omega$ is closed, it follows that $\Phi^*\omega$ is basic relative to $\pr_{\Sigma^0}:\Sigma^0\times\RR\to \Sigma^0$. On the other hand, for the section $s:\Sigma^0\to \Sigma^0\times\RR$, $g\mapsto (g,0)$, we have
\[ s^*\Phi^*\omega=(\Phi\circ s)^*\omega=\omega_{\Sigma^0}, \]
so we conclude that
\[ \Phi^*\omega=\pr^*_{\Sigma^0}\omega_{\Sigma^0}. \]

Similarly, from (a), we find that for any tangent vector $(v,0)\in T(\Sigma^0\times\RR)$
\[ i_{(v,0)}\Phi^*\alpha=\Phi^*(i_{\d\Phi(v,0)}\alpha)=0. \]
Since $\al$ is closed, it follows that $\Phi^*\al$ is basic relative to $\pr^*_\RR:\Sigma^0\times\RR\to\RR$. But using (b) again we find
\[ i_{\partial_t}\Phi^*\al=\Phi^*i_E\al=1, \]
so we conclude that
\[ \Phi^*\al=\pr^*_\RR\d t. \]
\end{proof}

\begin{proof}[Proof of Lemma \ref{lem:aux:proper}]
Observe that for each $x$ there is a smallest positive integer $n_0$ such that
\[ \varphi^{n_0t_0}_E(1_x)=1_x. \]
Note that $n_0$ is independent of $x$. This follows, e.g., because $n_0$ is the order of the group 
\[ \Sigma^0\cap \Phi_x(\RR)=\{1_x,\varphi^{t_0}_E(1_x),\dots, \varphi^{(n_0-1)t_0}_E(1_x)\}, \]
and these groups form a Lie group bundle when $x$ vary in $M$. From this it follows also that
\[ g\in \Sigma^0,\ \varphi^{t}_E(g)=1_x\quad \Leftrightarrow \quad 
\left\{
\begin{array}{l}
g=   \varphi^{n t_0}_E(1_x)   \\
\hskip 1,3 in \text{for some }n,k\in\Zz, \\
 t=-nt_0+k(n_0t_0).  
\end{array}
\right.
 \]
Since $\varphi^{n t_0}_E(1_x) =\varphi^{(n-kn_0) t_0}_E(1_x)$ we conclude that the kernel of the morphism \eqref{eq:morphism:proper:thm} is
 \[ \Ker\Phi=\{ (\varphi^{m t_0}_E(1_x),-mt_0): m\in\Zz\}. \]
This proves the lemma and completes also the proof of the theorem.
\end{proof}
}

\section{The Infinitesimal picture}               %
\label{sec:algebroids}                                    %

\subsection{Infinitesimal data of a cosymplectic groupoid}                                        %
\label{subsec:infin:data}                                       %
Let $(\G,\omega,\alpha)$ be a cosymplectic groupoid, with Reeb vector field $E$ and associated Poisson structure $\pi_\G$. All these geometric structures have infinitesimal versions, as we now explain.

In general, will denote by $A$ a Lie algebroid with bundle projection $p:A\to M$, anchor $\rho_A:A\to TM$, and Lie bracket $[~,~]_A$ 
on its space of sections $\Gamma(A)$. 
Our conventions are such that if $\G\tto M$ is a Lie groupoid, then its Lie algebroid
$A=A(\G)$ has fiber $A_x:=\Ker\d_{1_x}\s$ and anchor $\rho_A|_x:=\d_{1_x}\t$. Moreover, its space of sections $\Gamma(A)$ is identified with the
space $\X_r(\G)$ of right invariant vector fields on $\G$ and we will denote by $\der{X}\in \X_r(\G)$ the right
invariant vector field corresponding to $X\in \Gamma(A)$. A multiplicative form $\omega\in\Omega^k(\G)$ induces 
a pair of bundle maps $\mu:A\to\wedge^{k-1}T^*M$, $\tilde{\mu}:A\to\wedge^k T^*M$, defined by
\begin{align*} 
\mu(a)(v_1,\dots,v_{k-1})&=\omega(a,\d\eps(v_1),\dots,\d\eps(v_{k-1})), \\
\tilde{\mu}(a)(v_1,\dots,v_{k})&=\d\omega(a,\d\eps(v_1),\dots,\d\eps(v_{k})).
\end{align*}
These maps satisfy the following conditions that characterize infinitesimal multiplicative (IM) forms (see, e.g., \cite{BuCa,BCWZ}):
\begin{align}
i_{\rho_A(b)}\mu (a)&=-i_{\rho_A(a)}\mu(b), \notag \\
\mu ([a,b]_A)&=\Lie _{\rho _A (a)}\mu(b)-i_{\rho _A(b)}(\d\mu (a)+\tilde{\mu}(a)),\label{eq:IM}\\
\tilde{\mu}([a,b]_A)&=\Lie _{\rho _A (a)}\tilde{\mu}(b)-i_{\rho _A(b)}\d\tilde{\mu} (a),\notag
 \end{align}
 for all sections $a,b\in \Gamma(A)$. For a closed IM form the component $\tilde{\mu}$ vanishes.

After these preliminaries we can now list the infinitesimal data corresponding to a cosymplectic groupoid. Let $(\G,\omega,\alpha)$ be a cosymplectic groupoid and denote by $A\to M$ its Lie algebroid. Then:
\begin{enumerate}[(i)]
\item The multiplicative closed 1-form $\al$ induces a closed IM 1-form $\nu:A\to\RR$;
\item The multiplicative closed 2-form $\omega$ induces a closed IM 2-form $\mu:A\to  T^*M$;
\item The Reeb vector field $E$ induces a central section $e\in\Gamma(A)$, i.e., $E=\der{e}=\esq{e}$ where $\rho(e)=0$ and $[e,a]=0$, for all $a\in\Gamma(A)$;
\item The multiplicative Poisson structure $\pi_\G$ induces a unique Poisson structure $\pi_M\in\X^2(M)$, for which the target is a Poisson map, and the source is an anti-Poisson map.
\end{enumerate}
Only the last item needs some justification. One can show directly from the condition that $\pi_\G$ is multiplicative that the Poisson bracket of functions locally constant on the $\t$-fibers is a function locally constant on the $\t$-fibers (see, e.g., \cite{We}), so that there is a unique Poisson structure on $M$ for which the submersion $\t:\G\to M$ is Poisson.

The infinitesimal data above has various relationships between themselves, which can be stated in a concise form as follows:

\begin{prop}
\label{prop:inf:data}
The Lie algebroid $A\to M$ of a cosymplectic groupoid $(\G,\omega,\al)$ is a central extension
\begin{equation}
\label{eq:seq:cosymp:algbrd:2}
\xymatrix{0\ar[r]& \kk \ar[r] & A\ar@{-->}@/^/[l]^{\underline{\nu}}\ar[r]^{\mu} & T^*M \ar[r] &0}
\end{equation}
where $T^*M$ is equipped with the cotangent Lie algebroid structure associated with the Poisson manifold $(M,\pi_M)$ and $\kk$ is the trivial line bundle generated by the central section $e\in\Gamma(A)$. This extension has a natural splitting given by:
\[ \underline{\nu}:A\to \kk,\quad a\mapsto \nu(a)e.\]
\end{prop}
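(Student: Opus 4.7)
The plan is to exploit the symplectic leaf $\Sigma^0 \subset \G$ through the identity section, which by Proposition~\ref{prop:symplectic:leaf} is a symplectic groupoid $(\Sigma^0, \omega|_{\Sigma^0})$ integrating $(M,\pi_M)$. As such, its Lie algebroid is canonically identified with the cotangent algebroid $T^*M$ via the assignment $a \mapsto \omega|_{\Sigma^0}(a, \d\eps(\cdot))$. Differentiating the Lie groupoid inclusion $\Sigma^0 \hookrightarrow \G$ yields a Lie algebroid morphism $\iota: T^*M \hookrightarrow A$, and under these identifications the equality $\mu \circ \iota = \mathrm{id}_{T^*M}$ is immediate, since $\d\eps(v) \in T_{1_x}\Sigma^0$ for $v \in T_xM$ and $\omega$ agrees with $\omega|_{\Sigma^0}$ on $T\Sigma^0$.

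Next I would compute $\ker\mu$ fiberwise at each $1_x$. Since $\omega_{1_x}$ has corank one with kernel $\langle e_x\rangle$, since $\d\eps(T_xM)$ is $\omega$-isotropic (multiplicativity forces $\eps^*\omega = 0$), and since $e_x \notin \d\eps(T_xM)$ by transversality of $E$ to the identity section, a short dimension count in $T_{1_x}\G = \d\eps(T_xM) \oplus A_x$ shows that $\d\eps(T_xM)^{\perp_\omega} \cap A_x$ is one-dimensional. Combined with $\mu(e) = 0$, which follows from $i_E\omega = 0$, this forces $\ker\mu = \kk$. Together with the surjectivity of $\mu$ guaranteed by the section $\iota$, one obtains the short exact sequence $0 \to \kk \to A \to T^*M \to 0$ at the level of vector bundles.

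The algebraic structure now comes for free. Centrality of $e$ with $\rho_A(e) = 0$ makes $\kk = \langle e\rangle$ a trivial line bundle which is a central Lie ideal of $A$ with zero bracket and zero anchor, while $\iota(T^*M)$ is a Lie subalgebroid isomorphic via $\iota$ to the cotangent algebroid of $(M,\pi_M)$. Centrality of $\kk$ then automatically promotes the vector bundle decomposition $A = \iota(T^*M) \oplus \kk$ to a direct sum of Lie algebroids, identifying $A$ with the trivial central extension $T^*M \oplus \RR$ and $\mu$ with $\pr_{T^*M}$. For the splitting statement, $\nu(e) = \alpha(E) = 1$ gives $\underline{\nu}|_\kk = \mathrm{id}_\kk$, while for $\beta \in T^*M$ the vector $\iota(\beta)$ lies in $T\Sigma^0 \subset \ker\alpha$, so $\nu(\iota(\beta)) = 0$ and $\underline{\nu}$ vanishes on $\iota(T^*M)$; hence $\underline{\nu}$ is precisely the projection onto $\kk$ in the above direct sum decomposition, which is the required natural splitting.

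The subtle step will be confirming that the canonical identification $A(\Sigma^0) \cong T^*M$ supplied by the symplectic groupoid structure is exactly the one making $\mu \circ \iota$ the identity rather than some nontrivial automorphism of $T^*M$. This is standard for symplectic groupoids, but deserves careful formulation since $\mu$ is a priori defined through the ambient form $\omega$ on $\G$ and not through $\omega|_{\Sigma^0}$ alone.
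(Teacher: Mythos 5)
Your argument is correct, but it takes a genuinely different route from the paper's. The paper works entirely with the infinitesimal data: it proves surjectivity of $\mu$ by showing $\mu^*:TM\to A^*$ is injective (using that $\ker\omega^\flat=\langle E\rangle$ is transverse to the units), it verifies that $\mu$ is a Lie algebroid morphism by combining the closed IM $2$-form identities \eqref{eq:IM} with the fact that $\t$ is a Poisson map, and it establishes the splitting by checking that the $T^*M$-connection induced by $\underline{\nu}$ on $\kk$ has $e$ as a flat section and has vanishing curvature $\nu([a_1,a_2]_A)=0$, the latter being a consequence of the closed IM $1$-form equation for $\nu$. You instead import Proposition \ref{prop:symplectic:leaf}: the Lie algebroid of the symplectic subgroupoid $(\Sigma^0,\omega|_{\Sigma^0})$ is canonically the cotangent algebroid of $(M,\pi_M)$ via $a\mapsto\omega(a,\d\eps(\cdot))$, which is literally $\mu|_{A(\Sigma^0)}$, so $A(\Sigma^0)$ is a Lie subalgebroid splitting the sequence; since $T\Sigma^0\subset\ker\al$ it coincides with $\ker\nu$, which identifies your geometric splitting with $\underline{\nu}$. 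Your fiberwise computation of $\ker\mu$ is sound (the pairing $\d\eps(T_xM)\times A_x\to\RR$ induced by $\omega$ has full rank on the $T_xM$ side because $\d\eps(T_xM)$ is isotropic and meets $\ker\omega=\langle e_x\rangle$ trivially), and centrality of $e$ does promote the vector bundle decomposition to the trivial central extension of Lie algebroids. What your route buys is a geometric interpretation of the splitting as the Lie algebroid of the leaf through the units, at the cost of outsourcing the morphism property of $\mu$ to the standard fact that a symplectic groupoid integrates the cotangent algebroid of its base --- a fact whose proof is essentially the IM-form computation the paper performs in step (i), so the two arguments agree at bottom. The paper's route has the advantage of isolating which closed-IM-form identity is responsible for each structural feature (flatness and zero curvature of the splitting come from closedness of $\nu$), which is precisely what gets relaxed later in Lemma \ref{lem:gamma:omega} and Theorem \ref{thm:proper:Poisson:grpd}.
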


\begin{proof} $\quad$ 

\vskip 5 pt
(i) \emph{$\mu: A\to T^*M$ is a surjective Lie algebroid map:} The definition of $\mu$ shows that we have a commutative diagram:
\[
\xymatrix{
T\G \ar@<.5ex>[d]\ar[r]^{\omega ^\flat }& T^*\G \ar@<.5ex>[d]\\
TM \ar@<.5ex>@{<-}[u] \ar[r]^{\mu^*} & \ar@<.5ex>@{<-}[u]  A^* }
\]
where $\mu^*$ is the transpose of $\mu$. By Proposition \ref{prop:Reeb:right:left}, we know that $E\in\ker\omega^\flat$ is transverse to the identity section, so we conclude that $\mu ^*$ is injective. 

Now observe that since $\t:(\G,\pi_\G)\to (M,\pi_M)$ is a Poisson map, using the definition of $\pi_\G$ and $\mu$, we obtain
\[ \pi_M^\sharp(\mu (a) )=\d_{1_x}\t\cdot \pi_\G^\#\cdot (\d_{1_x}\t)^*(\mu (a))=\rho _A(a)\]
where $a\in A_x$. On the other hand, since $\mu$ is a closed IM form, relations \eqref{eq:IM} give
\begin{align*}
\mu ([a,b]_A)&=\Lie _{\rho_A(a)}\mu(b)-i_{\rho_A(b)}\d\mu (a)\\
&=\Lie_{\pi_M^\sharp(\mu (a))}\mu(b)-i_{\pi_M^\sharp(\mu (b))}\d\mu (a)=[\mu(a),\mu(b)]_{\pi_M},
\end{align*}
so $\mu:A\to T^*M$ is a Lie algebroid morphism.

\vskip 5 pt
(ii) $\ker(\mu)= \langle e \rangle$: Since $\mu$ is surjective, its kernel is a rank 1 vector sub-bundle. Since $e\in\Gamma(A)$ is a non-vanishing section, all we have to check is that $\mu(e)=0$. This is clear from the definition of $\mu$ since $\mu(e)=(i_E\omega)|_{TM}=0$.

\vskip 5 pt
(iii) \emph{$\underline{\nu}:A\to \langle e\rangle$, $a\mapsto \nu(a)e$, splits the short exact sequence \eqref{eq:seq:cosymp:algbrd:2}:} Notice that we have $\nu(e)=1$, since
\[ \nu(e)(x)=\alpha_{1_x}(E_{1_x})=1. \]
This shows that $\underline{\nu}$ is a splitting as a short exact sequence of vector bundles. Associated with this splitting there is a $T^*M$-connection on the bundle $\kk$. Because $e$ commutes with any section of $A$,  we have that $e$ is a flat section
\[ \nabla_\be e=0. \]
On the other hand, the curvature 2-form of this splitting is given by
\[ c(\be_1,\be_2)=\nu([a_1,a_2]_A), \]
where $a_i\in \Ker \nu$ is the unique element such that $\mu(a_i)=\be_i$. But this curvature 2-form vanishes since $\nu$ is a closed IM form and from \eqref{eq:IM} find
\[
\nu([a_1,a_2]_A)=\Lie_{\rho_A(a_1)}\nu(a_2)-i_{\rho_A(a_2)}\d \nu (a_1)=0,
\]
whenever $a_1,a_2\in\Gamma(\Ker \nu)$.
\end{proof}

The previous proposition shows that the space of objects of a cosymplectic groupoid is a Poisson manifold, and that we have a canonical isomorphism:
\begin{equation}
\label{eq:iso:algebroid} 
A\cong T^*M\oplus \RR,\quad a\mapsto (\mu(a),\nu(a)).
\end{equation}
Under this isomorphism, the anchor becomes 
\[ \rho_A:T^*M\oplus\RR\to TM,\quad (\be,\lambda)\mapsto \pi^\sharp_M(\be ),\] 
while the bracket on sections $(\be_i, f_i)\in \Omega ^1(M)\times C^\infty (M)$ can be written as:
\[
[(\be_1 ,f_1),(\be_2,f_2)]_A = ([\be_1,\be_2]_{\pi_M},\pi_M^\sharp (\be_1)(f_2)-\pi_M^\sharp(\be_2)(f_1)).
\]
In other words, we have:

\begin{cor}
\label{cor:algebroid:trivial}
If $(\G,\omega,\al)$ is a cosymplectic groupoid, its Lie algebroid $(A,\mu,\nu)$ is canonically isomorphic via \eqref{eq:iso:algebroid} to the trivial central extension of the cotangent algebroid associated with the base Poisson manifold $(M,\pi _M)$:
\[ (A,\mu,\nu)\simeq (T^*M\oplus\RR,\pr_{T^*M},\pr_{M\times\RR}). \]
\end{cor}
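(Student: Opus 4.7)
The plan is to read off the Lie algebroid structure of $A$ directly from Proposition \ref{prop:inf:data}, verifying that each piece of data---underlying bundle, anchor, and bracket---matches the trivial central extension on the right-hand side.

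First, I would check that $a\mapsto(\mu(a),\nu(a))$ is a vector bundle isomorphism. This is immediate from Proposition \ref{prop:inf:data}: $\mu$ is surjective onto $T^*M$ with kernel $\langle e\rangle$, and since $\nu(e)=1$ the map is fiberwise a linear bijection. Under this isomorphism, $\mu$ and $\nu$ become, tautologically, the projections $\pr_{T^*M}$ and $\pr_{M\times\RR}$. The anchor matches as well: in the proof of Proposition \ref{prop:inf:data} the identity $\rho_A(a)=\pi_M^\sharp(\mu(a))$ was derived from $\t$ being a Poisson map, and under the identification this reads $\rho_A(\beta,\lambda)=\pi_M^\sharp(\beta)$, exactly the anchor of the trivial central extension.

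The main computation is the bracket. Given $a_i\in\Gamma(A)$ corresponding to $(\beta_i,f_i)\in\Omega^1(M)\times C^\infty(M)$, I would compute the two components of $[a_1,a_2]_A$ separately. The $T^*M$-component $\mu([a_1,a_2]_A)=[\beta_1,\beta_2]_{\pi_M}$ follows from $\mu$ being a Lie algebroid morphism, which was established in the proof of Proposition \ref{prop:inf:data}. For the $\RR$-component, I would invoke the IM equation \eqref{eq:IM} for the closed IM 1-form $\nu$; in this degree $\nu(a)$ is a function on $M$ and $\tilde\nu=0$ since $\alpha$ is closed, so the equation reduces to
\[ \nu([a_1,a_2]_A)=\Lie_{\rho_A(a_1)}\nu(a_2)-i_{\rho_A(a_2)}\d\nu(a_1). \]
Substituting $\rho_A(a_j)=\pi_M^\sharp(\beta_j)$ and $\nu(a_j)=f_j$, and noting that $\Lie_{\rho_A(a_1)}\nu(a_2)=\pi_M^\sharp(\beta_1)(f_2)$ and $i_{\rho_A(a_2)}\d\nu(a_1)=\pi_M^\sharp(\beta_2)(f_1)$, the right-hand side becomes $\pi_M^\sharp(\beta_1)(f_2)-\pi_M^\sharp(\beta_2)(f_1)$, matching the bracket of the trivial central extension displayed just before the corollary.

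Since every piece of structure either matches tautologically or follows directly from Proposition \ref{prop:inf:data}, there is no genuine obstacle. The only delicate point is keeping track of the low-degree IM-form conventions so that the equation for $\nu$ unfolds correctly; once this is done, the map \eqref{eq:iso:algebroid} is manifestly an isomorphism of Lie algebroids, and the corollary follows.
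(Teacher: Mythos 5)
Your proposal is correct and follows essentially the same route as the paper: the corollary is read off from Proposition \ref{prop:inf:data} together with the anchor identity $\rho_A(a)=\pi_M^\sharp(\mu(a))$ and the IM equations for the closed forms $\mu$ and $\nu$. The only cosmetic difference is that the paper packages the $\RR$-component of the bracket via the flatness of $e$ and the vanishing curvature of the splitting $\underline{\nu}$, whereas you unfold the degree-one IM equation for $\nu$ on arbitrary sections directly; both amount to the same computation.
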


The only missing piece on the infinitesimal side is what corresponds to the multiplicative Poisson structure $\pi_\G$. At the infinitesimal level such a structure corresponds to a \emph{Lie bialgebroid}. We recall that a Lie bialgebroid is a pair of Lie algebroid structures $(A,A^*)$, on a vector bundle $A\to M$ and on its dual bundle $A^*\to M$, such that for any $X, Y\in \Gamma(A)$,
\[
\d_* [X,Y]_A=[X , \d _* Y]_A-[Y ,\d _* X]_A.
\] 
where $\d_*$ denotes the $A^*$-differential (\cite{MX94}). Given a multiplicative Poisson structure $\pi _\G$ on a groupoid $\G$ with Lie algebroid $A$ one obtains a Lie algebroid structure on $A^*$, so that $(A,A^*)$ is a Lie bialgebroid. Namely, the $A^*$-differential is characterized by 
\[
\der{\d_* X}=-[\der{X} , \pi _\G],\qquad (X\in \Gamma (A)).
\]
The Lie bialgebroid of a cosymplectic groupoid is again rather special.

\begin{prop}
\label{prop:Lie:bialgebroid:cosymp}
If $(\G,\omega,\al)$ is a cosymplectic groupoid, \eqref{eq:iso:algebroid} gives an isomorphism of Lie bialgeboids
\[ (A,A^*)\simeq (T^*M\oplus\RR,TM\oplus \RR), \]
where $T^*M$ denotes the cotangent Lie algebroid of the base Poisson manifold $(M,\pi_M)$.
\end{prop}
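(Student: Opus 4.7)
\emph{Plan.} By Corollary \ref{cor:algebroid:trivial} we already have the Lie algebroid isomorphism $A \simeq T^*M\oplus\RR$, which dualizes to $A^*\simeq TM\oplus\RR$ as vector bundles. The content of the proposition is therefore to identify the Lie algebroid structure that $\pi_\G$ induces on $A^*$ with the trivial central extension of the tangent algebroid of $M$. Since the Lie bialgebroid of a Poisson groupoid is determined by the Lie algebroid together with the infinitesimal germ of the multiplicative Poisson structure along $M$, it coincides with the bialgebroid of the source 1-connected cover of $\G$. By Corollary \ref{cor:integration} we may therefore replace $\G$ by the explicit model $\Sigma(M)\times\RR$ equipped with $\pi_\G = \pi_\Sigma\oplus 0$, where $\pi_\Sigma = \Omega^{-1}$ and $(\Sigma(M),\Omega)$ is the source 1-connected symplectic integration of $(M,\pi_M)$.

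For this model the plan is to compute $\d_*$ directly from $\der{\d_* X} = -[\der{X}, \pi_\G]$. A section $X = (\beta, f) \in \Omega^1(M)\oplus C^\infty(M)$ has right-invariant lift $\der{X} = \der{\beta} + (f\circ \t)\partial_t$, where $\der{\beta}$ denotes the right-invariant vector field on $\Sigma(M)$ and $\partial_t = \der{e}$ is the Reeb vector field. Using that $\pi_\Sigma$ is $t$-independent, the Schouten bracket splits as
\[ [\der{X}, \pi_\G] = [\der{\beta}, \pi_\Sigma] - \pi_\Sigma^\sharp(\t^*df)\wedge\partial_t. \]
The first term is $-\der{d\beta}$, via the Lie bialgebroid of the symplectic groupoid $(\Sigma(M),\Omega)$, which is the standard cotangent-tangent pair $(T^*M, TM)$. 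The second equals $-\der{df}\wedge \der{e}$, since $\pi_\Sigma^\sharp(\t^* df)$ is the right-invariant Hamiltonian vector field attached to $df\in \Gamma(T^*M)$. Combining, $\d_* X = d\beta + df\wedge e$; under the splitting $\wedge^2 A \simeq \wedge^2 T^*M \oplus T^*M$, this is just the de Rham differential applied componentwise.

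A direct dualization will then identify the corresponding Lie algebroid structure on $A^* \simeq TM\oplus\RR$ as having anchor the projection onto $TM$ and bracket $[(X_1, g_1), (X_2, g_2)] = ([X_1, X_2],\, X_1(g_2) - X_2(g_1))$, which is the trivial central extension of $TM$ by $\RR$. The main technical work will be bookkeeping the sign conventions in the Schouten bracket and correctly matching $\pi_\Sigma^\sharp(\t^* df)$ with the right-invariant lift of $df$ on $\Sigma(M)$; once these are settled, the identification of bialgebroids is immediate.
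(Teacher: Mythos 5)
Your proposal is correct, but it takes a genuinely different route from the paper's. The paper's proof is a two-line intrinsic argument: since the Reeb vector field $E=\der{e}$ is a Poisson vector field, the central section satisfies $\d_*e=0$ (citing Mackenzie), and combined with the fact---already established in Proposition \ref{prop:inf:data}---that $\mu:A\to T^*M$ is a Lie algebroid morphism onto the cotangent algebroid, this forces the transpose $(\mu^*,\nu^*):TM\oplus\RR\to A^*$ to be a Lie algebroid isomorphism; the underlying mechanism is made explicit later in Lemma \ref{lem:gamma:omega}, where the $A^*$-structure is shown to be twisted by $\gamma=\tilde{\nu}(e)$ and $\Omega=\tilde{\mu}(e)$, which vanish precisely because the IM forms are closed. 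You instead pass to the explicit source 1-connected model $\Sigma(M)\times\RR$ via Corollary \ref{cor:integration} and compute $\d_*$ from the Schouten bracket with $\pi_\Sigma\oplus 0$. This is legitimate: the bialgebroid only depends on the germ of the multiplicative Poisson structure along the identity section, so replacing $\G$ by its source 1-connected cover changes nothing, and there is no circularity since the proof of Corollary \ref{cor:integration} uses only the Lie algebroid isomorphism of Corollary \ref{cor:algebroid:trivial} (though note you are invoking a result that the paper establishes only after this proposition, so the order of presentation would have to be adjusted). What your approach buys is an explicit description of $\d_*$ as the componentwise de Rham differential on $\Omega^1(M)\oplus C^\infty(M)$, which is more informative than the paper's argument; what it costs is the reduction step plus the sign bookkeeping you flag---e.g.\ the identity $[(f\circ\t)\partial_t,\pi_\Sigma]=\pm\,\pi_\Sigma^\sharp(\t^*\d f)\wedge\partial_t$ and the matching of $\pi_\Sigma^\sharp(\t^*\d f)$ with $\der{\d f}$ both carry convention-dependent signs, but these only affect the normalization of the identification, not the conclusion that $A^*$ is the trivial extension of $TM$.
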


\begin{proof}
By Corollary \ref{cor:algebroid:trivial}, we already know that the IM forms corresponding to $(\omega,\al)$ give a Lie algebroid isomorphism:
\[ A\cong T^*M\oplus \RR,\quad a\mapsto (\mu(a),\nu(a)). \]
On the other hand, the central section $e\in\Gamma(A)$ satisfies:
\[ \d_*e=0, \]
since the Reeb vector field $E=\der{e}$ is a Poisson vector field on $\G$ (see, \cite[Thm 11.4.7]{Mc2}). This implies that the transpose of the map $(\mu,\nu)$ is also a Lie algebroid isomorphism:
\[ TM\oplus \RR \cong A^*,\quad ({u},\lambda)\mapsto (\mu^*({u}),\nu^*(\lambda)). \]
\end{proof}

\subsection{Source 1-connected cosymplectic groupoids}                                        %
\label{subsec:1-connected}                                       %

By the results in the previous section, source 1-connected cosymplectic groupoids are very easy to describe:

\begin{thm}
The base of any cosymplectic groupoid $(\G,\omega,\alpha)$ is an integrable Poisson manifold $(M,\pi_M)$. If $\G$ is source 1-connected then there is a canonical isomorphism
\[ (\G,\omega,\alpha)\cong (\Sigma(M)\times\RR,\pr_\Sigma^*\Omega,\pr_\RR^*\d t) \]
where $(\Sigma(M),\Omega)$ the source 1-connected symplectic integration of $(M,\pi_M)$.
\end{thm}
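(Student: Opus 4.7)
The plan is to combine the algebroid identification (Corollary \ref{cor:algebroid:trivial}) with Lie's second theorem and the integration theorem for IM forms. First, I would invoke Proposition \ref{prop:identity:leaf} to conclude that $(M,\pi_M)$ is integrable and that its source 1-connected symplectic integration $(\Sigma(M),\Omega)$ exists. The product $\Sigma(M)\times \RR$, equipped with the pair groupoid structure on the second factor (or equivalently, the trivial central extension of $\Sigma(M)$ by $\RR$ as in Example \ref{ex:trivial:product}), is then a source 1-connected Lie groupoid integrating $T^*M\oplus \RR$, and it carries the cosymplectic structure $(\pr_\Sigma^*\Omega,\pr_\RR^*\d t)$.

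Next I would apply Corollary \ref{cor:algebroid:trivial} to identify the Lie algebroid $A$ of $\G$ with $T^*M\oplus \RR$ as IM triples $(A,\mu,\nu)$, i.e., as Lie algebroids together with their closed IM forms. Since $\G$ is source 1-connected, Lie II produces a unique Lie groupoid morphism
\[ F:\G\To \Sigma(M)\times\RR \]
integrating this algebroid isomorphism; because the target groupoid is also source 1-connected with the same algebroid, $F$ is an isomorphism of Lie groupoids.

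Finally, I would use the integration theorem for closed multiplicative forms on source 1-connected Lie groupoids (the correspondence between closed multiplicative $k$-forms and closed IM $(k-1)$-cocycles, see e.g.\ \cite{BCWZ}): since $F$ intertwines the IM data $(\mu,\nu)$ on both sides and both $\G$ and $\Sigma(M)\times\RR$ are source 1-connected, $F$ must pull back the multiplicative forms $(\pr_\Sigma^*\Omega,\pr_\RR^*\d t)$ to the multiplicative forms $(\omega,\alpha)$. This gives the desired isomorphism of cosymplectic groupoids, which is canonical since each step is canonical.

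The main obstacle is really just bookkeeping: one must check that the IM form associated with $\pr_\Sigma^*\Omega$ is indeed $\mu=\pr_{T^*M}$ and that the IM form associated with $\pr_\RR^*\d t$ is indeed $\nu=\pr_{M\times\RR}$ under the natural identification $A(\Sigma(M)\times\RR)\cong T^*M\oplus\RR$. This is immediate from the definition of the IM data, since $\Omega$ restricted to the identity section gives the tautological pairing between $T^*M$ and $TM$, while $\d t$ contracts with $\partial_t$ to give $1$. After this verification, the uniqueness half of the IM--multiplicative form correspondence does all the remaining work.
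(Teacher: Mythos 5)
Your proposal is correct and follows essentially the same route as the paper: identify the Lie algebroid with $T^*M\oplus\RR$ via Corollary \ref{cor:algebroid:trivial} (which also yields integrability of $(M,\pi_M)$, though citing Proposition \ref{prop:identity:leaf} for that works equally well), integrate the isomorphism by Lie II, and transport the forms using the uniqueness half of the IM--multiplicative form correspondence. The only slip is the parenthetical ``pair groupoid structure on the second factor,'' which should be the additive group $(\RR,+)$ viewed as a groupoid over a point, so that $\Sigma(M)\times\RR\tto M$ is the trivial central extension your own reference to Example \ref{ex:trivial:product} correctly identifies.
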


\begin{proof}
Since $A\cong T^*M\oplus \RR$, $a\mapsto (\mu(a),\nu(a))$, is a Lie algebroid isomorphism, it follows that $A$ is integrable iff and only if $T^*M$ is integrable. When $\G$ is source 1-connected, the integration of this isomorphism gives the desired groupoid isomorphism. This groupoid isomorphism maps $\pr_\Sigma^*\Omega$ to $\omega$ and $\pr_\RR^*\d t$ to $\alpha$.
\end{proof}
In general, if $(\G,\omega,\alpha)$ is only source connected, we have 
\[ (\Sigma(M)\times\RR)/\Lambda,\] 
where $i:\Lambda\hookrightarrow \Sigma(M)\times\RR$ is an embedded bundle of discrete Lie groups such that $i^*\Omega=0$ and $i^*\d t=0$.  If we assume that the orbit space of the Reeb vector field $E\in\X(\G)$ is smooth, we obtain that $\G$ is a central extension of some  symplectic integration $\Sigma$ of $(M,\pi_M)$:
\[ \xymatrix{1\ar[r]& \K\ar[r] & \G\ar[r] & \Sigma \ar[r] &1}. \]
In general, this sequence fails to split. Moreover, it may have a groupoid splitting, so that $\G\cong \Sigma\times\K$, while the cosymplectic structure may not be the trivial one (as in Example \ref{ex:trivial:product}). This is illustrated in the next example. 

\begin{ex}
Let $\G=\RR^2\times \Ss^1\tto\RR$ be the trivial bundle of Lie groups with projection $(x,y,\theta)\mapsto x$ and fiber $\RR\times \Ss^1$.  The forms
\[ \omega:=\d x\wedge \d y+\d x\wedge\d\theta,\quad \al:=\frac{1}{2}\left(\d y-\d\theta \right),\]
define a multiplicative cosymplectic structure on $\G$. The corresponding Reeb vector field is
\[ E=\partial_y-\partial_\theta. \]
It follows that the kernel of this cosymplectic groupoid is a trivial bundle $\RR\times\RR\to\RR$ and the leaf space of this vector field can be identified with $\RR\times\Ss^1$, giving rise to the central extension:
\[ \xymatrix{1\ar[r]& \RR\times\RR\ar[r] & \RR^2\times\Ss^1\ar[r] & \RR\times\Ss^1 \ar[r] &1} \]
Here the first map is given by $(x,y)\mapsto (x,y,-y)$ while the second map is given by $(x,y,\theta)\mapsto (x,y+\theta)$. This sequence has the splitting $(x,\theta)\mapsto (x,0,\theta)$. 

We claim that although  $\G\cong \Sigma\times\K$ as a groupoid, the cosymplectic structure structure is not isomorphic to $(p_\Sigma^*\Omega,p_\K^*\d \theta)$.  In fact, the symplectic leaves of $\G$ are the leaves of the distribution $\d y-\d\theta=0$, so admit the parametrization $(x,y)\mapsto (x,y,y+c)$, with $c\in\Ss^1$. Hence, the symplectic leaves are diffeomorphic to $\RR^2$, while a trivial extension $\Sigma\times\K$ has symplectic leaves diffeomorphic to $\RR\times\Ss^1$.
\end{ex}

\section{Poisson groupoids}               %
\label{sec:Poisson:grpd}                                     %

\subsection{Poisson groupoids of corank 1}               %

Let $\G \tto M$ be a Lie groupoid equipped with a regular multiplicative Poisson structure $\pi _\G$ of corank 1. Recall that a Poisson structure $\pi _\G$ is multiplicative if and only if $\pi _\G^\sharp \colon T^*\G \to T\G$ is a morphism of Lie groupoids
\[
\xymatrix{
T^*\G \ar@<.5ex>[d]\ar[r]^{\pi_\G ^\sharp }& T\G \ar@<.5ex>[d]\\
A^* \ar@<.5ex>@{<-}[u] \ar[r]^{\rho_{A^*} } & \ar@<.5ex>@{<-}[u] TM }
\]
In this diagram $A^*$ is identified with the annihilator subbundle $(TM)^0\subset T^*_M\G$ and the anchor $\rho_{A^*}$ is the restriction of $\pi_\G ^\sharp$ to this subbundle.  The condition that $\pi _\G$ is regular of corank 1 places restrictions on the codimension of $M$ in $\G$.

\begin{prop}
\label{prop:dim}
Let $(\G \tto M,\pi _\G)$ be a regular Poisson groupoid of corank 1. Then either:
\[ \mathrm{(C1)}\quad \dim\G=2\dim M+1\qquad \textrm{or}\quad  \mathrm{(C2)}\quad \dim\G=2\dim M-1. \]
\end{prop}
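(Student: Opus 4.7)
The plan is to compute $\ker \pi_\G^\sharp$ at a point of the identity section and use the corank $1$ hypothesis to squeeze $\rank A$ to lie in $\{\dim M - 1, \dim M+1\}$. I would fix $x\in M$ and adopt the splittings
\[ T_{1_x}\G = A_x \oplus T_xM, \qquad T^*_{1_x}\G = T^*_xM \oplus A^*_x, \]
with $T_xM$ embedded via $\d\eps$, $A_x = \ker \d_{1_x}\s$, $T^*_xM$ embedded via $\d\s^*$, and $A^*_x = (T_xM)^0$. In these splittings I expect $\pi_\G^\sharp|_{1_x}$ to take the matrix form
\[ \pi_\G^\sharp|_{1_x} = \begin{pmatrix} -\rho_{A^*}^* & 0 \\ -\pi_M^\sharp & \rho_{A^*} \end{pmatrix}. \]
The zero block reflects that $M\subset \G$ is coisotropic (a standard consequence of the multiplicativity of $\pi_\G$), while the $\rho_{A^*}$ block records that the base map of the Lie groupoid morphism $\pi_\G^\sharp\colon T^*\G\to T\G$ is the anchor of $A^*$. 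The $-\pi_M^\sharp$ block comes from $\s\colon (\G,\pi_\G)\to (M,-\pi_M)$ being anti-Poisson, and the $-\rho_{A^*}^*$ block is then forced by the skew-symmetry of the bivector $\pi_\G$.

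Next I would invoke the Mackenzie--Xu identity $\pi_M^\sharp = \rho_{A^*}\circ \rho_A^*$ for the Lie bialgebroid of a Poisson groupoid, which yields $\im \pi_M^\sharp \subset \im \rho_{A^*}$, equivalently $\ker \rho_{A^*}^* \subset \ker \pi_M^\sharp$. A direct reading of the matrix would then give the kernel splitting
\[ \ker \pi_\G^\sharp|_{1_x} = \ker \rho_{A^*}^* \oplus \ker \rho_{A^*}, \]
so
\[ \dim \ker \pi_\G^\sharp|_{1_x} = (n - k) + (r - k), \]
where $n=\dim M$, $r=\rank A$, and $k=\rank \rho_{A^*}|_x$. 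The corank $1$ assumption forces this dimension to equal $1$, giving $n+r = 2k+1$.

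Finally, from $k\leq \min(n,r)$ one deduces $n-1 \leq r \leq n+1$, while the parity constraint $n+r=2k+1$ being odd (equivalently, the bivector $\pi_\G$ has even rank $\dim \G - 1$) rules out $r=n$. Hence $r\in\{n-1,n+1\}$ and $\dim \G = n+r \in\{2n-1,2n+1\}$. The main obstacle is the explicit derivation of the matrix form of $\pi_\G^\sharp|_{1_x}$: this is routine once one commits to the splittings above, but requires careful bookkeeping of the three ingredients---groupoid morphism over $\rho_{A^*}$, anti-Poisson source, and skew-symmetry of $\pi_\G$---together with the Lie bialgebroid identity used to simplify the kernel.
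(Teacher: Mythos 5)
Your argument is correct, but it follows a genuinely different route from the paper's. The paper never writes down $\pi_\G^\sharp$ at the units: it picks an arbitrary extension $\omega$ of the leafwise symplectic form (possible by constant rank), antisymmetrizes it under inversion to get $\tilde{\omega}$ with $\textbf{i}^*\tilde{\omega}=-\tilde{\omega}$, deduces $\epsilon^*\tilde{\omega}=0$, and reads off $2\dim M\le\dim\G+1$ from the fact that $TM$ is isotropic for a $2$-form of rank $\dim\G-1$; the opposite inequality $2\dim M\ge\dim\G-1$ comes from coisotropy of the unit section, and oddness of $\dim\G$ finishes. Your approach instead computes $\ker\pi_\G^\sharp|_{1_x}$ in the canonical splittings via the block matrix and the Mackenzie--Xu identity $\pi_M^\sharp=\rho_{A^*}\circ\rho_A^*$. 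The paper's argument is softer (no bialgebroid identity, no bookkeeping of the matrix entries), while yours is more informative: it identifies $\ker\pi_\G^\sharp|_{1_x}=\ker\rho_{A^*}^*\oplus\ker\rho_{A^*}$ explicitly and shows that case (C1) is exactly $\rho_{A^*}$ surjective ($k=n$) and case (C2) exactly $\rho_{A^*}$ injective ($k=r$), which anticipates the equivalences of Proposition \ref{prop:type:C1}. Two small points to make sure you nail down when writing this up: verify that $\d\s^*(T^*_xM)\oplus(T_xM)^0$ is indeed a direct sum decomposition of $T^*_{1_x}\G$ (it is, since $\d\s\circ\d\eps=\mathrm{id}$), and carry out the matrix derivation in full --- the three ingredients you list (groupoid morphism over $\rho_{A^*}$ together with coisotropy for the zero block, anti-Poisson source for the $-\pi_M^\sharp$ block, skew-symmetry for the $-\rho_{A^*}^*$ block) do suffice, and the signs are immaterial for the dimension count.
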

\begin{proof}
Since $\pi_\G$ has constant rank, we can choose an extension $\omega\in \Omega ^2(\G)$ of the leafwise symplectic form, i.e.,
\[
\omega (\pi_\G^\sharp (\alpha),\pi _\G^\sharp (\beta ))=\langle \alpha ,\pi _\G^\sharp (\beta )\rangle.
\]
Note that $\omega$ is not assumed to be multiplicative. We claim that $-\textbf{i}^*\omega$ is also an extension of the leafwise symplectic form. Indeed, since $\textbf{i}$ is anti-Poisson, we find
\begin{align*}
-\textbf{i}^*\omega  (\pi_\G^\sharp (\alpha),\pi _\G^\sharp (\beta ))& = -\omega  (\d\textbf{i} \smalcirc \pi_\G^\sharp (\alpha),\d\textbf{i} \smalcirc \pi _\G^\sharp (\beta ))\\
&= -\omega  ( \pi_\G^\sharp (\textbf{i}^*(\alpha)), \pi _\G^\sharp (\textbf{i}^*(\beta ))) \\
&=  - \langle  \textbf{i}^*(\alpha), \pi _\G^\sharp (\textbf{i}^*(\beta ))) = \langle \alpha ,\pi _\G^\sharp (\beta )\rangle.
\end{align*}
It follows that $\tilde{\omega}=\frac{1}{2} \left ( \omega - \textbf{i}^* \omega \right )$ is also an extension satisfying additionally:
\[ \textbf{i}^*\tilde{\omega}=-\tilde{\omega}.\] 
Since $\textbf{i}\circ\epsilon=\epsilon$, we deduce that $\epsilon^*\tilde{\omega}=0$. In addition, $\rank \tilde\omega =\dim\G-1$ implies that 
\[ 2\dim M\le \dim\G+1. \]
Now observe that since $\pi_\G$ is multiplicative, the identity section is coisotropic, i.e., we have $\pi^\sharp_\G(TM)^0\subset TM$. So we also have 
\[ 2\dim M\ge \dim\G-1. \] 
Since $\dim \G$ is odd, it follows from these two estimates that $\dim\G=2\dim M\pm1$.
\end{proof}

Note that both cases listed in this proposition exist. For example, let $(\Sigma,\Omega)$ be a symplectic groupoid then:
\begin{enumerate}[(C1)]
\item the product of $\Sigma$ with a 1-dimensional Lie group, e.g. $\Ss^1\tto\{*\}$, is a Poisson groupoid $(\G \tto M,\pi _\G)$ of corank 1 with $\dim\G=2\dim M+1$;
\item the product of $\Sigma$ with a 1-dimensional identity groupoid, e.g. $\Ss^1\tto\Ss^1$, is a Poisson groupoid $(\G \tto M,\pi _\G)$ of corank 1 with $\dim\G=2\dim M-1$.
\end{enumerate}
We will be interested in Poisson groupoids of type (C1) since these are the ones that arise in connection with cosymplectic groupoids.


\begin{prop}
\label{prop:type:C1}
If $(\G \tto M,\pi _\G)$ is a Poisson groupoid of corank 1 then the following conditions are equivalent:
\begin{enumerate}[(i)]
\item $\dim\G=2\dim M+1$;
\item $\rank(\pi _\G) = 2 \dim M$;
\item $\rho_{A^*}:A^*\to TM$ surjective;
\item the identity section $M$ is contained in a symplectic leaf of $\pi_\G$.
\end{enumerate}
\end{prop}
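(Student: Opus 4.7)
The plan is to reduce everything to linear algebra at a single unit $1_x$, exploiting the normal form of $\pi_\G^\sharp$ forced by coisotropy of $M$. The equivalence (i)$\Leftrightarrow$(ii) is immediate, because corank 1 reads $\rank\pi_\G = \dim\G - 1$, so the two conditions on $\dim\G$ and $\rank\pi_\G$ translate directly into each other.

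For the remaining equivalences, I would split $T_{1_x}\G = T_xM\oplus A_x$ and, dually, $T^*_{1_x}\G = A^0_x\oplus A^*_x$, with $A^0_x \cong \s^*T^*_xM$ the conormal of $A$ and $A^*_x = (T_xM)^0$. Coisotropy of $M$ makes the $A^*\to A$ block of $\pi_\G^\sharp|_{1_x}$ vanish, so it takes the shape
\[
\pi_\G^\sharp|_{1_x} = \begin{pmatrix} \pi_{TT} & -B \\ B^T & 0 \end{pmatrix},
\]
and the off-diagonal block $-B\colon A^*_x\to T_xM$ is exactly $\rho_{A^*}$.

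For (i)$\Leftrightarrow$(iii) I would use the trivial inclusion $\ker\rho_{A^*}\subset \ker\pi_\G^\sharp$ (since $\rho_{A^*}$ is the restriction of $\pi_\G^\sharp$ to $A^*$); the right-hand side is one-dimensional by corank 1, so $\dim\ker\rho_{A^*}\le 1$. In case (C1), $\dim A^*_x = \dim M+1$, which forces $\rank\rho_{A^*}\ge\dim M$, i.e., surjectivity onto $T_xM$; in case (C2), $\dim A^*_x = \dim M-1$ is simply too small for surjectivity, so (iii) selects case (C1).

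Finally, for (iii)$\Leftrightarrow$(iv), the block form shows that the $A$-component of $\im\pi_\G^\sharp|_{1_x}$ is precisely $\im B^T$, of dimension $\rank B$. Hence $T_xM\cap \im\pi_\G^\sharp|_{1_x}$ has dimension $\rank\pi_\G - \rank B = 2\dim M - \rank B$ in case (C1), which equals $\dim M$ exactly when $\rho_{A^*}$ is surjective. Since $M$ lies in a symplectic leaf iff $T_xM\subset \im\pi_\G^\sharp|_{1_x}$ at every $x$ (with connectedness of $M$ used to pass from this pointwise condition to containment in a single leaf), the equivalence is established. The main obstacle is setting up the coisotropy normal form carefully; once the lower-right block is known to vanish, everything else is elementary rank bookkeeping.
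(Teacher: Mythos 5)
Your reduction to linear algebra at a unit $1_x$ is the same strategy as the paper's, and the parts (i)$\Leftrightarrow$(ii) and (i)$\Leftrightarrow$(iii) are correct: the bound $\ker\rho_{A^*}\subset\ker\pi_\G^\sharp$ together with the dichotomy $\dim\G=2\dim M\pm 1$ does the job exactly as in the paper's (ii)$\Rightarrow$(ii i) step. However, there is a genuine gap in your treatment of (iv): your normal form for $\pi_\G^\sharp|_{1_x}$ records only the coisotropy of $M$ (vanishing of the $A^*\to A$ block) plus skew-symmetry, and that information alone does \emph{not} exclude case (C2) under hypothesis (iv). You compute $\dim\bigl(T_xM\cap\im\pi_\G^\sharp|_{1_x}\bigr)=\rank\pi_\G-\rank B$ only ``in case (C1)'', but the same formula in case (C2) reads $2\dim M-2-\rank B$, which equals $\dim M$ as soon as $\rank B=\dim M-2$, and this is perfectly compatible with $\rank B\le\dim A^*_x=\dim M-1$. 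Concretely, with $\dim M=2$, $\dim\G=3$, $B=0$ and $\pi_{TT}$ nondegenerate on $T_xM$, your matrix is skew, satisfies coisotropy, has rank $2=\dim\G-1$, and has $T_xM\subset\im\pi_\G^\sharp|_{1_x}$ — yet it sits in case (C2) with $\rho_{A^*}=0$. So (iv)$\Rightarrow$(iii) (equivalently (iv)$\Rightarrow$(i)) is not established by rank bookkeeping on that block form.

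The missing ingredient is the second half of the multiplicativity constraint at units, which the paper uses: besides $\pi_\G^\sharp\bigl((TM)^0\bigr)\subset TM$ one also has $\pi_\G^\sharp\bigl((\ker\d\s)^0\bigr)\subset\ker\d\t$. In your decomposition $T^*_{1_x}\G=A^0_x\oplus A^*_x$ this says that the first column $(\pi_{TT},B^T)$ lands in $\ker\d_{1_x}\t$, which is a complement of $T_xM$ graphed over $A_x$; hence $\pi_{TT}\eta$ is determined by $B^T\eta$, and in particular $\im\pi_\G^\sharp|_{1_x}=\pi_\G^\sharp(A^0_x)\oplus\pi_\G^\sharp(A^*_x)$ with the first summand meeting $T_xM$ trivially. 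Therefore $T_xM\cap\im\pi_\G^\sharp|_{1_x}=\im\rho_{A^*}$ exactly, so (iv) immediately forces $\rho_{A^*}$ to be surjective, which rules out (C2) and closes the cycle. Once you add this constraint to your block form (it kills the offending example above, since it forces $\pi_{TT}(\ker B^T)\subset T_xM\cap\ker\d\t=0$), the rest of your argument goes through.
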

\begin{proof} $\quad$

(i) $\Rightarrow$ (ii) Obvious.

(ii) $\Rightarrow$ (iii) Since $\dim \G=2\dim M+1$, we have $\rank(TM)^0=\dim M+1$. Hence, the map $\rho_{A^*}:=(\pi_\G ^\sharp)|_{(TM)^0}:(TM)^0\to TM$ must be surjective.

(iii) $\Rightarrow$ (iv) If (iii) holds, then $TM=\im(\rho_{A^*})\subset \im(\pi_\G ^\sharp)$.

(iv) $\Rightarrow$ (i) For any Poisson groupoid $(\G \tto M,\pi _\G)$ one has:
\[ \pi_\G ^\sharp(TM)^0\subset TM,\quad \pi_\G ^\sharp(\ker\d\s)^0\subset \ker\d\t. \]
If (iv) holds, then we must have $\rank (TM)^0\ge \rank(TM)$, so case (C2) is excluded.
\end{proof}

In the sequel, by a ``Poisson groupoid of type (C1)'' we mean a Poisson groupoid of corank 1 satisfying any of the equivalent conditions of Proposition \ref{prop:type:C1}.

\begin{cor}
Let $(\G,\pi_\G)$ be a Poisson groupoid of type (C1). Then the symplectic leaf $\Sigma^0$ of $\pi_\G$ containing the identity section is a (symplectic) subgroupoid of $(\G,\pi _\G)$ integrating $(M,\pi _M)$. In particular, $(M,\pi _M)$ is an integrable Poisson manifold.
\end{cor}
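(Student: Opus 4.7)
The plan is to mimic the proof of Proposition \ref{prop:symplectic:leaf}, with the symplectic foliation of $\pi_\G$ itself playing the role that the multiplicative foliation $\ker\alpha$ played in the cosymplectic setting.

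First, I would observe that because $\pi_\G$ is multiplicative of constant rank, the bundle map $\pi_\G^\sharp\colon T^*\G\to T\G$ is a Lie groupoid morphism of constant rank, so its image
\[ \F:=\im(\pi_\G^\sharp)\subset T\G \]
is a Lie subgroupoid of $T\G\tto TM$. By condition (iii) of Proposition \ref{prop:type:C1}, the anchor $\rho_{A^*}$ is surjective, which identifies the base of this subgroupoid with all of $TM$ and shows that $\F$ is a multiplicative (involutive) distribution in the sense of \cite{JO14}, satisfying
\[ \textbf{i}_*\F=\F,\qquad \m_*(\F\timesst \F)=\F. \]

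Next, by condition (iv) of Proposition \ref{prop:type:C1} and the connectedness of $M$, the identity section is contained in a single symplectic leaf $\Sigma^0$ of $\F$. From this point on, the argument of Proposition \ref{prop:symplectic:leaf} can be transcribed essentially verbatim: the first multiplicativity relation, together with the fact that inversion fixes $M\subset\Sigma^0$, forces $\textbf{i}(\Sigma^0)\subset\Sigma^0$; and the second, together with the fact that $\Sigma^0\timesst\Sigma^0$ is the leaf of $\F\timesst\F$ through every diagonal pair $(1_x,1_x)$, forces $\m(\Sigma^0\timesst\Sigma^0)\subset\Sigma^0$. The standard fact that leaves of foliations are regularly immersed submanifolds then gives smoothness of these restrictions as maps into $\Sigma^0$, so $\Sigma^0\tto M$ is a Lie subgroupoid of $\G$.

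Finally, the leafwise symplectic form $\omega_{\Sigma^0}$ on $\Sigma^0$ is multiplicative because it is the restriction of the leafwise symplectic form of the multiplicative Poisson structure $\pi_\G$; hence $(\Sigma^0,\omega_{\Sigma^0})\tto M$ is a symplectic groupoid, and composing its inclusion with the target of $\G$ yields a Poisson map $\t\colon(\Sigma^0,\omega_{\Sigma^0})\to(M,\pi_M)$, so it integrates $(M,\pi_M)$. The only step that requires genuine care is the assertion that $\F=\im(\pi_\G^\sharp)$ is multiplicative as a distribution in the groupoid-theoretic sense; once that is established, the remainder is essentially a restatement of the cosymplectic argument.
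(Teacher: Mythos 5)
Your argument is correct, but it is not the route the paper takes as its primary proof. The paper's proof is infinitesimal: it observes that the transpose $\rho_{A^*}^*\colon T^*M\to A$ is a Lie algebroid morphism which, by condition (iii) of Proposition \ref{prop:type:C1}, is injective; hence the cotangent algebroid of $(M,\pi_M)$ is a Lie subalgebroid of the integrable algebroid $A$, and integrability of $(M,\pi_M)$ (together with the identification of the integrating subgroupoid with $\Sigma^0$) follows from the standard fact that subalgebroids of integrable algebroids are integrable. Your proof is instead the global, groupoid-theoretic one: you show directly that $\F=\im(\pi_\G^\sharp)$ is a multiplicative involutive distribution with $\F_0=TM$ and then transcribe the argument of Proposition \ref{prop:symplectic:leaf}. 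The paper explicitly acknowledges this as a valid alternative in the last line of its proof, so you are on safe ground. What each approach buys: the infinitesimal argument is shorter but leans on the external integrability theorem for subalgebroids and leaves the identification of the integration with the leaf $\Sigma^0$ implicit; your argument is self-contained at the groupoid level and produces the symplectic groupoid structure on $\Sigma^0$ directly. You correctly isolate the one new input, namely that $\im(\pi_\G^\sharp)$ is multiplicative, which follows as you say from $\pi_\G^\sharp$ being a constant-rank groupoid morphism covering the surjective $\rho_{A^*}$. One small step you gloss over: in the cosymplectic case the multiplicativity of $\omega_{\Sigma^0}$ is immediate because it is the restriction of an ambient multiplicative $2$-form $\omega$, whereas here no such ambient form is given; you should instead note that $\pi_\G$ restricts to a nondegenerate multiplicative bivector on the subgroupoid $\Sigma^0$, and that the inverse of a nondegenerate multiplicative bivector is a multiplicative (and automatically closed) $2$-form. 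This is standard, but it is the point where your phrase ``restriction of the leafwise symplectic form'' hides a genuine, if routine, verification.
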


\begin{proof}
The transpose $\rho_{A^*}^*: T^*M\to A$ is a Lie algebroid morphism, where $T^*M$ is the cotangent bundle algebroid associated with the base Poisson manifold $(M,\pi_M)$ (see \cite{Mc2}). By Proposition \ref{prop:type:C1} (iii),  this morphism is injective, so $T^*M$ is a Lie subalgebroid of the integrable algebroid $A$, and the result follows. 
Alternatively, one can also apply the argument in the proof of Proposition \ref{prop:symplectic:leaf}.
\end{proof}

\subsection{Poisson groupoids vs cosymplectic groupoids}               %

When is a Poisson groupoid of corank 1 a cosymplectic groupoid? By Proposition \ref{prop:Lie:bialgebroid:cosymp}, its Lie bialgebroid must be a central extension. A necessary and sufficient condition is the following multiplicative version of the criteria for a Poisson manifold of corank 1 to be cosymplectic:

\begin{prop}
\label{prop:Poisson:grpd}
A Poisson groupoid $(\G,\pi_\G)$ is cosymplectic if and only if $\pi_\G$ is regular of corank 1 and there exists a non-vanishing, bi-invariant, Poisson vector field $E\in\X(\G)$ transverse to the  symplectic foliation.
\end{prop}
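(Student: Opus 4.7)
The ``only if'' direction is immediate from Proposition \ref{prop:Reeb:right:left}: any cosymplectic groupoid $(\G,\omega,\al)$ carries a multiplicative Poisson structure $\pi_\G$ of corank $1$, and its Reeb vector field is a non-vanishing, bi-invariant, Poisson vector field transverse to $\ker\al = \im\pi_\G^\sharp$. The substance of the proposition lies in the converse.

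For the converse, assume $(\G,\pi_\G)$ is regular of corank $1$ and that $E$ is as in the statement. The strategy is to first apply the pointwise recipe recalled in Section \ref{sec:background}: there is a unique pair $(\omega,\al)\in\Omega^2(\G)\times\Omega^1(\G)$ characterized by $\ker\omega=\langle E\rangle$ and $\omega$ extending the leafwise symplectic form, together with $\ker\al=\F:=\im\pi_\G^\sharp$ and $\al(E)=1$. Since $E$ is a Poisson vector field, Proposition 2.1 of the paper guarantees that $(\omega,\al)$ is closed; it is cosymplectic by construction and induces $\pi_\G$. The whole task reduces to proving that $\omega$ and $\al$ are multiplicative.

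The key structural observation is that $T\G$ admits a multiplicative direct sum decomposition $T\G=\F\oplus\langle E\rangle$. Indeed, $\F$ is a multiplicative distribution because $\pi_\G^\sharp:T^*\G\to T\G$ is a Lie groupoid morphism, so its image is a Lie subgroupoid of $T\G$; one also verifies that $\F\cap TM=TM$ using the fact (Proposition \ref{prop:type:C1}(iv)) that $M$ lies in a single symplectic leaf. Bi-invariance of $E$, on the other hand, forces $E\in\ker\d\s\cap\ker\d\t$ together with $\d R_h(E_g)=E_{gh}=\d L_g(E_h)$, so $\langle E\rangle$ is a multiplicative line subbundle. For any composable pair $(X,Y)\in T_{(g,h)}\G^{(2)}$, decomposing $X=X_\F+\lambda E_g$ and $Y=Y_\F+\mu E_h$, the composability condition reduces to composability of $(X_\F,Y_\F)$, and one gets
\[ \d\m(X,Y)=\d\m(X_\F,Y_\F)+(\lambda+\mu)\,E_{gh}, \]
with $\d\m(X_\F,Y_\F)\in\F_{gh}$ by multiplicativity of $\F$.

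Multiplicativity of $\al$ is then immediate: $\al$ vanishes on $\F$ and $\al(E)=1$, so $\al(\d\m(X,Y))=\lambda+\mu=\al(X)+\al(Y)$. For $\omega$ the $E_{gh}$-component drops out since $\ker\omega=\langle E\rangle$, reducing the desired equality $\m^*\omega=\pi_1^*\omega+\pi_2^*\omega$ to its leafwise version on pairs of $\F$-valued composable vectors. This last statement is the 2-form dual of the multiplicativity of $\pi_\G$, i.e.\ of $\pi_\G^\sharp$ being a Lie groupoid morphism, and once this translation is established the proof is complete. I expect this last step to be the main technical obstacle: while intuitively clear, it requires a careful comparison of the Lie groupoid structures on $T^*\G\tto A^*$ and $T\G\tto TM$ via $\pi_\G^\sharp$ in order to deduce multiplicativity of $\omega|_\F$ from that of $\pi_\G$.
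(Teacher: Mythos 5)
Your proposal is correct and follows essentially the same route as the paper: both directions are handled identically, via the pointwise recipe for $(\omega,\al)$, the multiplicative decomposition $\d\m(X_1,X_2)=(\lambda_1+\lambda_2)E+\pi_\G^\sharp(\gamma_1\cdot\gamma_2)$ coming from bi-invariance of $E$ and multiplicativity of $\im\pi_\G^\sharp$, and then reading off $\m^*\al=\pi_1^*\al+\pi_2^*\al$. The final step you flag for $\omega$ is treated just as briefly in the paper; it follows from the standard fact that the canonical pairing $T^*\G\times_\G T\G\to\RR$ is a groupoid morphism to $(\RR,+)$, combined with $\omega(\pi_\G^\sharp\gamma,\pi_\G^\sharp\delta)=\langle\gamma,\pi_\G^\sharp\delta\rangle$ and multiplicativity of $\pi_\G^\sharp$.
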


\begin{proof}
In one direction, we already know that the Reeb vector field of a cosymplectic groupoid is a Poisson vector field transverse to the symplectic foliation, which is both left and right invariant.

To prove the reverse direction, assume that  $(\G,\pi_\G)$ is a Poisson groupoid of corank 1 that admits a non-vanishing Poisson vector field $E\in\X(\G)$ transverse to the symplectic foliation, which is both left and right invariant. We extend the symplectic forms on the leaves to a 2-form $\omega$ by requiring $i_E\omega=0$. Also, we define a 1-form $\alpha$ by requiring $\alpha(E)=1$ and $\ker\al=\im\pi^\sharp_\G$. Since $E$ is a Poisson vector field, one checks easily that $\omega$ and $\alpha$ are closed. We claim that $\omega$ and $\alpha$ are multiplicative.

Take $(X_1,X_2)\in T\G^{(2)}$. Since $E$ is tranverse to the symplectic foliation $X_i=\lambda _iE+\pi _\G ^\sharp (\gamma ^i)$, $i=1,2$. Moreover, the fact that $E$ is left and right invariant implies that $\d \s ( \pi _\G ^\sharp (\gamma_1) )=\d \t ( \pi _\G ^\sharp (\gamma_2) )$. Since $\pi_\G$ is multiplicative, $\im \pi_\G ^\sharp\subseteq T\G$ is a multiplicative distribution, so we find 
\[
\d \m ( X_1 ,X_2 )= (\lambda _1 +\lambda _2) E + \pi _\G^\sharp (\gamma_1\cdot \gamma _2).
\]
Using that $\alpha (E)=1$, $\Ker \alpha =\im \pi_\G^\sharp$, and that $\omega$ is an extension of $\pi _\G$ with $i_E\omega=0$, we can conclude that $\omega$ and $\alpha$ are multiplicative. For instance, for $\alpha$,
\[
\m ^*\alpha (X_1,X_2)= \alpha (\d \m (X_1,X_2)) =  \alpha ((\lambda _1 +\lambda _2) E + \pi _\G^\sharp (\gamma_1\cdot \gamma _2)) = \lambda _1+\lambda _2.
\]
On the other hand, 
\[
( \pi _1 ^*\alpha + \pi _2 ^*\alpha )(X_1,X_2)= \alpha (\lambda _1E+\pi _\G ^\sharp (\gamma ^1))+  \alpha (\lambda _2E+\pi _\G ^\sharp (\gamma ^2))= \lambda _1+\lambda _2.
\]
Thus, $\alpha$ is multiplicative.
\end{proof}

\subsection{Proper Poisson groupoids of corank 1}               %

{ 

Proposition \ref{prop:Poisson:grpd} shows that given a Poisson groupoid $(\G,\pi_\G)$ of corank 1, in order to have a compatible multiplicative cosymplectic structure one needs a vector field $E\in\X(\G)$ transverse to symplectic foliation satisfying two properties:
\begin{enumerate}[(a)]
\item $E$ is bi-invariant;
\item $E$ is a Poisson vector field.
\end{enumerate}
We now analyze these properties for the important special case of \emph{proper} Poisson groupoids. 

First, as a general remark, note that the existence of a vector field transverse to the symplectic foliation means that this foliation is co-orientable. Since the leaves are oriented (being symplectic), this is equivalent to $\G$ being orientable. Hence, we will assume this condition throughout this discussion.

We start by looking into condition (a).

\begin{prop}
\label{prop:condition:a}
Let $(\G,\pi_\G)$ be an orientable Poisson groupoid of corank 1 with symplectic foliation $\F_{\pi_\G}$. The following two conditions are equivalent:
\begin{enumerate}[(i)]
\item There exists a bi-invariant vector field transverse to $\F_{\pi_\G}$;
\item There exists a multiplicative 1-form whose kernel is $\F_{\pi_\G}$;
\end{enumerate}
and they imply that
\begin{enumerate}[(i)]
\item[(iii)] There exists a multiplicative 2-form extending the leafwise symplectic form. 
\end{enumerate}
If $\G$ is proper and $\dim \G=2\dim M+1$ then the 3 conditions are equivalent.
\end{prop}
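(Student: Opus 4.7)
My plan is to establish $(i)\Rightarrow(ii)$, $(ii)\Rightarrow(i)$, and $(i)\Rightarrow(iii)$ without any further hypotheses, and then to tackle $(iii)\Rightarrow(i)$ under the properness and $\dim\G=2\dim M+1$ assumption. The first three implications all follow the decomposition argument used in the proof of Proposition \ref{prop:Poisson:grpd}: given any one of $E$, $\alpha$, or $\omega$, the others are defined by $\alpha(E)=1$ with $\ker\alpha=\F_{\pi_\G}$, or by $i_E\omega=0$ together with the condition that $\omega$ restrict to the leafwise symplectic form of $\pi_\G$; on a composable pair $(X_1,X_2)\in T\G^{(2)}$ one decomposes $X_i=\lambda_i E+\pi_\G^\sharp(\gamma_i)$ and checks multiplicativity using bi-invariance of $E$ (which gives $\d\m(E|_g,0_h)=E|_{gh}=\d\m(0_g,E|_h)$) together with the fact that $\pi_\G^\sharp:T^*\G\to T\G$ is a Lie groupoid morphism. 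For $(ii)\Rightarrow(i)$, one uses that $\ker\alpha=\im\pi_\G^\sharp$ forces $\alpha$ to be a nowhere-vanishing multiplicative section of the line bundle $\ker\pi_\G^\sharp\subset T^*\G$, whose associated multiplicative trivialization of $T\G/\F_{\pi_\G}$ produces the bi-invariant vector field $E$.

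For $(iii)\Rightarrow(i)$ my first step is antisymmetrization: replace $\omega$ by $\tilde\omega=\tfrac{1}{2}(\omega-\textbf{i}^*\omega)$, which by the argument of Proposition \ref{prop:dim} remains a multiplicative extension of the leafwise symplectic form and additionally satisfies $\textbf{i}^*\tilde\omega=-\tilde\omega$. Since $\d\textbf{i}|_{A_x}=-\mathrm{id}$ and $\d\textbf{i}|_{T_xM}=\mathrm{id}$, this antisymmetry forces $\tilde\omega|_{A_x\times A_x}=0$, and a direct fiberwise computation at $1_x$ yields
\[
\ker\tilde\omega|_{1_x}=\ker\mu_x\oplus(\im\mu_x)^0,
\]
where $\mu:A\to T^*M$ is the IM $2$-form of $\tilde\omega$. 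The dimension hypothesis $\dim\G=2\dim M+1$ forces $\mu$ to be surjective, so $(\im\mu_x)^0=0$ and hence $\ker\tilde\omega|_M=\ker\mu\subset A$. Since $\ker\tilde\omega$ is a multiplicative distribution, this forces $\d\s(\ker\tilde\omega|_M)=0$, so $\ker\tilde\omega\subset\ker\d\s\cap\ker\d\t$ globally; the line bundle $\ker\tilde\omega$ is therefore tangent to the isotropy fibers of $\G$.

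The main obstacle is then to produce a bi-invariant section of $\ker\tilde\omega$. Any nowhere-vanishing section $e\in\Gamma(\ker\mu)$ yields a right-invariant section $\der{e}\in\Gamma(\ker\tilde\omega)$ by right-invariance of the multiplicative distribution, so the issue is to arrange centrality of $e$ in the Lie algebroid $A$. Here properness enters twice: first, the $\Ad$-action of the compact isotropy group $\G_x$ on the oriented line $\ker\tilde\omega|_{1_x}\subset\gg_x$ factors through $\RR^+$ by orientability of $\G$, and a compact subgroup of $\RR^+$ is trivial, so $\ker\tilde\omega|_{1_x}$ lies in the center of $\gg_x$; second, since multiplicativity of $\tilde\omega$ makes $\ker\mu$ itself $\Ad$-invariant, a suitably normalized Haar average of a nowhere-vanishing section $e_0\in\Gamma(\ker\mu)$ (which exists by orientability) yields a nowhere-vanishing $\Ad$-invariant, hence central, section $e\in\Gamma(\ker\mu)$, and the resulting $E=\der{e}=\esq{e}$ is the required bi-invariant vector field transverse to $\F_{\pi_\G}$.
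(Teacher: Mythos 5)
Your treatment of (i)$\Leftrightarrow$(ii) and (i)$\Rightarrow$(iii) matches the paper's (both reduce to the decomposition argument of Proposition \ref{prop:Poisson:grpd}), and your overall strategy for (iii)$\Rightarrow$(i) -- locate $\ker\omega|_M$ inside $\ker\d\s\cap\ker\d\t$, then use properness and orientability to correct a right-invariant section of $\ker\omega$ into a bi-invariant one -- is the same as the paper's. Your final step differs only cosmetically: you Haar-average a nowhere-vanishing section of $\mathfrak{k}=\ker\omega|_M$ over the adjoint representation, whereas the paper observes that the defect of left-invariance is a multiplicative function $c:\G\to\RR^+$ and uses properness to write it as a coboundary $c(g)=f(\t(g))/f(\s(g))$; these are the same mechanism (averaging is how one trivializes such cocycles on proper groupoids), and in both cases orientability is what puts the cocycle in $\RR^+$ rather than $\RR^*$.

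There is, however, a wrong step in your derivation of the kernel formula. First, $\d\textbf{i}|_{A_x}$ is not $-\mathrm{id}$: since $\d\m(X,Y)=X+Y-\d\eps(\d\s(X))$ at units, one gets $\d\textbf{i}(a)=-a+\d\eps(\rho_A(a))$, which maps $\ker\d\s$ onto $\ker\d\t$ and is $-\mathrm{id}$ only on $\ker\rho_A$. Consequently your claim $\tilde\omega|_{A_x\times A_x}=0$ is false in general: multiplicativity gives $\omega_{1_x}(a,b)=\mu(a)(\rho_A(b))$ for $a,b\in A_x$ (apply \eqref{eq:mult:symp} to the composable pairs $(a,0)$ and $(\d\eps\rho_A(b),b)$), and this is nonzero already for $\Sigma\times\Ss^1$ with $\Sigma$ a symplectic groupoid. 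Also note the antisymmetrization is vacuous here, since a multiplicative form automatically satisfies $\textbf{i}^*\omega=-\omega$. Fortunately your displayed formula $\ker\omega|_{1_x}=\ker\mu_x\oplus(\im\mu_x)^0$ survives: writing $\omega_{1_x}(a\oplus u,b\oplus v)=\mu(a)(\rho_A(b))+\mu(a)(v)-\mu(b)(u)$, taking $b=0$ forces $\mu(a)=0$, after which the term $\mu(a)(\rho_A(b))$ drops out and one is left with $u\in(\im\mu_x)^0$. With that repair, your dimension count gives surjectivity of $\mu$ and hence $\mathfrak{k}\subset\ker\rho_A$, which is the same conclusion the paper extracts more directly from \cite[Lemma 3.3]{BCWZ} together with Proposition \ref{prop:type:C1}(iv). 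The rest of your argument then goes through.
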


\begin{proof}
(i) $\Leftrightarrow$ (ii) A vector field $E\in\X(\G)$ transverse to $\F_{\pi_\G}$ determines a unique 1-form $\al\in\Omega^1(\G)$ by
\[ i_E\al=1,\quad \ker\al=T\F_{\pi_\G}. \]
Conversely, given $\al$ these conditions determine $E$. By an argument entirely similar to the last part of the proof of Proposition \ref{prop:Poisson:grpd}, one checks that $\al$ is multiplicative if and only if $E$ is both left and right invariant.

(i) $\Rightarrow$ (iii) Given a vector field $E\in\X(\G)$ transverse to $\F_{\pi_\G}$ which is both right and left invariant, we define an extension $\omega\in\Omega^2(\G)$ of the leafwise symplectic form $\omega_{\F_{\pi_\G}}$ by requiring
\[ i_E\omega=0. \]
Again, one checks easily that $\omega$ is multiplicative. 

(iii) $\Rightarrow$ (i) We assume now that $\G$ is proper and $\dim \G=2\dim M+1$. We let $\omega\in\Omega^2(\G)$ be a multiplicative 2-form extending the leafwise symplectic form. By \cite[Lemma 3.3]{BCWZ}, the multiplicativity of $\omega$ implies that at points $x\in M$ one has
\begin{align*}
\ker\omega_x&=(\ker\omega_x\cap \ker\d_x \s)\oplus (\ker\omega_x\cap T_x M)\\
&=(\ker\omega_x\cap \ker\d_x \t)\oplus (\ker\omega_x\cap T_x M).
\end{align*}
By Proposition \ref{prop:type:C1} (iv), we have $\ker\omega_x\cap T_x M=\{0\}$, so we conclude that:
\[ \mathfrak{k}:=(\ker\omega)|_M\subset  \ker\d_M \t\cap \ker\d_M \s=\ker\rho_A. \]
Since $\ker \omega$ is multiplicative, it is a distribution which is both right and left invariant and, as a consequence,
$\mathfrak{k}$ is invariant under the adjoint action of $\G$. In addition, since $\F_{\pi_\G}$ is co-orientable, there exists a non-vanishing section $\tilde{e}\in {\Gamma}(\mathfrak{k})$. The corresponding right-invariant vector field $\tilde{E}=\der{\tilde{e}}$ may fail to be left-invariant. To correct this we use properness of $\G$. Since $\mathfrak{k}$ is $\Ad$-invariant, the function $c\colon \G\to \RR ^+$ defined by
\[
\d L_g(\tilde{E}_{\s (g)})=c(g)\tilde{E}_g,
\]
is multiplicative:
\[ c(gh)=c(g)c(h), \quad  ((g,h)\in\G^{(2)}). \]
Since $\G$ is proper, there is function $f\colon M\to \RR^+$ such that
\[ c(g)=f(\t(g))/f(\s (g)). \] 
The section $e:= f\tilde{e}\in {\Gamma}(\mathfrak{k})$ gives the desired vector field $E:=\der{e}=\esq{e}$.
\end{proof}

We now turn to condition (b) assuming than condition (a) holds. We show that, up to a cover, a proper Poisson groupoid of type (C1) satisfying (a) is homotopic to one satisfying also (b) through a homotopy that does not change the Poisson structure on the base:

\begin{thm}
\label{thm:proper:Poisson:grpd}
Let $(\G,\pi_\G)\tto (M,\pi _M)$ be an orientable proper Poisson groupoid of type (C1) and assume that there exists a multiplicative 2-form extending its leafwise symplectic form. If $(\widetilde{\G},\widetilde{\pi_\G})$ is its universal covering groupoid, then there is a path of $\widetilde{\pi}_\G^{\,t}\in\X^2(\widetilde{\G})$ of Poisson structures starting at $\widetilde{\pi}_\G^{\,0}=\widetilde{\pi_\G}$ with the following properties:
\begin{enumerate}[(i)]
\item each $\widetilde{\pi}_\G^{\,t}$ is multiplicative of corank 1;
\item the Poisson structure on $M$ induced by $\widetilde{\pi}_\G^{\,t}$ is $\pi_M$;
\item $\widetilde{\pi}_\G^{\,1}$ is associated with a multiplicative cosymplectic structure.
\end{enumerate}
\end{thm}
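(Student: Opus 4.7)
By the hypotheses, Proposition \ref{prop:condition:a} applies and yields a bi-invariant, non-vanishing vector field $E\in\X(\G)$ transverse to the symplectic foliation $\F_{\pi_\G}$, together with a multiplicative 1-form $\al_0\in\Omega^1(\G)$ satisfying $\ker\al_0 = T\F_{\pi_\G}$ and $\al_0(E) = 1$. After subtracting $(i_E\omega)\wedge\al_0$ from the given multiplicative extension $\omega$, one may further arrange $i_E\omega_0 = 0$ for the resulting multiplicative 2-form $\omega_0$. The triple $(E,\omega_0,\al_0)$ is ``pre-cosymplectic'': every algebraic condition for a cosymplectic structure on $\G$ holds, the single defect being that $\omega_0$ and $\al_0$ need not be closed, i.e., $E$ need not be a Poisson vector field. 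Passing to the universal covering groupoid $\widetilde\G$, all of this data lifts to multiplicative objects $\widetilde E,\widetilde\omega_0,\widetilde\al_0,\widetilde{\pi_\G}$ on the proper, source-$1$-connected groupoid $\widetilde\G$, whose Lie algebroid is still $A$.

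The heart of the argument is to upgrade $(\widetilde\omega_0,\widetilde\al_0)$ to a \emph{closed} multiplicative pair $(\widetilde\omega_1,\widetilde\al_1)$ still satisfying $i_{\widetilde E}\widetilde\omega_1 = 0$ and $\widetilde\al_1(\widetilde E) = 1$. It is enough to find multiplicative primitives $\phi\in\Omega^2(\widetilde\G)$ and $\theta\in\Omega^1(\widetilde\G)$ with $\d\phi = \d\widetilde\omega_0$, $\d\theta = \d\widetilde\al_0$, and satisfying the additional constraints $i_{\widetilde E}\phi = 0$ and $\theta(\widetilde E) = 0$, after which one sets $\widetilde\omega_1 := \widetilde\omega_0 - \phi$ and $\widetilde\al_1 := \widetilde\al_0 - \theta$. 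At the infinitesimal level, bi-invariance of $E$ translates into centrality of the corresponding section $e\in\Gamma(A)$, so $A$ sits in a central extension $0\to\RR e\to A\to T^*M\to 0$, and the IM data encoding $\d\widetilde\omega_0$, $\d\widetilde\al_0$ is exact at the level of Lie algebroid cohomology of $A$. The Van Est correspondence between multiplicative and IM forms, combined with an averaging procedure using the Haar system of the proper groupoid $\widetilde\G$, then produces the desired multiplicative primitives; the transversality conditions $i_{\widetilde E}\phi = 0$ and $\theta(\widetilde E) = 0$ are enforced by projecting along the complete bi-invariant flow of $\widetilde E$, which commutes with the averaging.

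Given such $\phi$ and $\theta$, I define the linear homotopy $\widetilde\omega_t := \widetilde\omega_0 - t\phi$, $\widetilde\al_t := \widetilde\al_0 - t\theta$ for $t\in[0,1]$. Each $\widetilde\omega_t,\widetilde\al_t$ is multiplicative, satisfies $i_{\widetilde E}\widetilde\omega_t = 0$ and $\widetilde\al_t(\widetilde E) = 1$, and the volume condition $\widetilde\al_t\wedge\widetilde\omega_t^{m}\neq 0$ is open, holds at $t=0$ and $t=1$, and may be kept valid along the path after a suitable reparametrization. Each pair thus determines a regular corank-$1$ multiplicative Poisson structure $\widetilde{\pi}_\G^{\,t}$ with symplectic foliation $\ker\widetilde\al_t$ and leafwise symplectic form the restriction of $\widetilde\omega_t$, interpolating between $\widetilde{\pi}_\G^{\,0} = \widetilde{\pi_\G}$ and a cosymplectic structure $\widetilde{\pi}_\G^{\,1}$. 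Multiplicativity of the corrections yields $\eps^*\widetilde\al_t = 0$ and $\eps^*\widetilde\omega_t = 0$, while $\widetilde E$ is fixed along the deformation, so the Lie algebroid anchor $\rho_{A^*_t}$ of the dual at points of $M$ is $t$-independent, showing that the induced base Poisson structure equals $\pi_M$ for every $t$, and hence (i)--(iii).

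The main obstacle is the cohomological step producing the primitives $\phi$ and $\theta$ with the required transverse and tangential behavior relative to $\widetilde E$. This is where both properness (supplying the Haar system for averaging) and source-$1$-connectedness of $\widetilde\G$ (making the Van Est correspondence usable in the relevant degree) are essential, and it is the only point at which passage to the universal cover is genuinely needed: on $\G$ itself the fundamental group of the source fibers may obstruct the existence of such multiplicative primitives. The remaining verifications---multiplicativity along the path, the corank-$1$ property, and constancy of the induced base Poisson structure---are routine given the explicit linear form of the deformation.
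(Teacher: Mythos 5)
Your opening moves match the paper's: invoke Proposition \ref{prop:condition:a} to get the bi-invariant transverse vector field $E$ and the multiplicative pair $(\omega_0,\al_0)$, normalized so that $i_E\omega_0=0$ and $\al_0(E)=1$, and identify non-closedness as the sole defect. From there, however, your strategy of finding multiplicative primitives $\phi,\theta$ for $\d\widetilde\omega_0,\d\widetilde\al_0$ and linearly interpolating the \emph{forms} has genuine gaps. First, the existence of $\phi$ and $\theta$ with the stated properties is the entire content of the theorem and is only asserted: "Van Est plus Haar averaging" is not an argument here, and it silently uses properness of $\widetilde\G$, which does not follow from properness of $\G$ (source $1$-connected covers of proper groupoids need not be proper). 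Moreover the constraints you impose on $\phi$ ($i_{\widetilde E}\phi=0$) are not the right ones: $\widetilde\omega_0$ itself satisfies them, so nothing prevents $\phi=\widetilde\omega_0$ and $\widetilde\omega_1=0$; you would need $\phi$ to vanish on the symplectic leaves (and $\theta$ to annihilate $\im\widetilde{\pi}_\G^\sharp$) to retain the leafwise symplectic form, and that refined existence statement is exactly what must be proved. Second, even granting $\phi$ and $\theta$, the intermediate pairs $(\widetilde\omega_t,\widetilde\al_t)$ need not define Poisson structures: for $0<t<1$ the $1$-form $\widetilde\al_t$ is not closed, so $\ker\widetilde\al_t$ need not be integrable ($\widetilde\al_t\wedge\d\widetilde\al_t=0$ is not preserved under convex combination), and $\widetilde\omega_t$ need not be closed along the putative leaves. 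So item (i) of the theorem is not established by your path, and the "openness plus endpoints" argument for the volume condition does not control the interior of the interval.

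The paper avoids all of this by working entirely at the infinitesimal level. It encodes the failure of closedness in two ordinary forms $\gamma=\tilde\nu(e)\in\Omega^1(M)$ and $\Omega=\tilde\mu(e)\in\Omega^2(M)$ on the base (Lemma \ref{lem:gamma:omega}), shows that $(A^*,A)$ is a triangular Lie bialgebroid whose defining conditions are stable under the rescalings $\Omega\mapsto(1-t)\Omega$ and then $\gamma\mapsto(2-t)\gamma$ (Proposition \ref{prop:triangular}), and integrates the resulting path of Lie bialgebroids via Mackenzie--Xu. Since $A$, its bracket, and both anchors are fixed along the path, the groupoid $\widetilde\G$ and the base Poisson structure $\pi_M$ are unchanged, and each stage is a genuine multiplicative Poisson structure by construction --- precisely the points your linear interpolation of forms cannot guarantee. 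Source $1$-connectedness enters only through the Mackenzie--Xu correspondence, not through any averaging on $\widetilde\G$.
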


The rest of this section will be dedicated to the proof of this theorem. 

Since $\pi_\G$ is a Poisson groupoid of type (C1), $\rho _{A^*}$ is surjective and there is a short exact sequence
\begin{equation}\label{eq:short:exact:sequence:A*}
\xymatrix{0\ar[r]& \mathfrak{k}^* \ar[r] & A^* \ar[r]^{\rho_{A^*}} & TM \ar[r] &0},
\end{equation}
and a dual sequence
\begin{equation}\label{eq:short:exact:sequence:A}
\xymatrix{0\ar[r]& T^*M \ar[r]^{\rho^*_{A^*}}  & A \ar[r] & \mathfrak{k} \ar[r] &0}.
\end{equation}

Let $\omega\in \Omega^2(\G)$ be the multiplicative 2-form extending the leafwise symplectic form, and $\al\in\Omega^1(\G)$ and $E\in\X(\G)$ be the corresponding multiplicative 1-form and bi-invariant vector field, given by Proposition \ref{prop:condition:a}. At the infinitesimal level these give:
\begin{enumerate}[(i)]
\item an IM 2-form $(\mu,\tilde{\mu}):A\to T^*M\oplus\wedge^2T^*M$;
\item an IM 1-form $(\nu,\tilde{\nu}):A\to \RR\oplus T^*M$;
\item a section $e\in{\Gamma}(\ker\rho_A)$ which is central: $[e,X]_A=0$, for any $X\in {\Gamma}(A)$.
\end{enumerate}

This data make the second short exact sequence \eqref{eq:short:exact:sequence:A} a split exact sequence of Lie algebroids. Indeed, one has $\mathfrak{k}=\RR e$, $\mu(e)=0$ and \eqref{eq:short:exact:sequence:A} becomes
\[
\xymatrix{0\ar[r]& M\times \RR \ar[r]^i & A \ar[r]^{\mu}\ar@/^1pc/[l]^{\nu} & T^*M \ar@/^1pc/[l]^{\rho^*_{A^*}}  \ar[r] &0}.
\]
where $i(x,\lambda)=\lambda e$. Because $e$ is central and $\rho^*_{A^*}:T^*M\to A$ is a Lie algebroid map, we conclude that we have a Lie algebroid isomorphism
\[ (\mu,\nu):A\diffto T^*M\oplus \RR, \]
where the right-hand side has anchor and bracket
\begin{align}
\rho _A(\alpha ,f)&=\pi_M^\sharp (\alpha ), \label{anchorA}\\
[(\alpha ,f),(\beta ,g)]_A&= ([\alpha ,\beta ]_{\pi_M}, 
\pi_M^\sharp (\alpha )(g) -\pi_M^\sharp (\beta )(f))., \label{bracketA}
\end{align}

Now let us look at the exact sequence \eqref{eq:short:exact:sequence:A*}. If $e^*$ is the section of $\mathfrak{k}^*$ defined by $\langle e^* , e\rangle =1$, then $\mathfrak{k}^*=\RR e^*$ and we have a vector bundle splitting
\[
\xymatrix{0  \ar[r] & M\times \RR \ar[r]^{\nu^*} & A^*  \ar[r]^{\rho_{A^*}}  \ar@/^1pc/[l]^{i^*}& TM \ar@/^1pc/[l]^{\mu^*}\ar[r] & 0} 
\]
The Lie algebroid structure of $A^*$ can be described in terms of the central section $e$.

\begin{lem}\label{lem:gamma:omega}
There is a Lie algebroid isomorphism
\[ (\rho_{A^*},i^*):A^*\diffto TM\oplus \RR, \]
where the right-hand side has anchor and Lie bracket given by
\begin{align}
\rho _{A^*}(X ,a)&=X, \label{anchorA*}\\
[ (X ,a),(Y ,b) ] _{A^*}&= ([X,Y ],\nabla _X b -\nabla _Y a +\Omega (X,Y) ), \label{bracketA*}
\end{align}
with $\nabla$ the flat connection on the trivial line bundle $M\times \RR\to M$ given by
\[
\nabla _X a= X(a)+a\gamma (X), \qquad \gamma (X):=\langle d_\ast e, X\wedge e^*\rangle, 
\]
and $\Omega \in \Omega ^2(M)$ given by
\[
\Omega (X,Y):=\langle d_\ast e, X\wedge Y\rangle.
\]
Moreover, for all $\alpha \in \Omega^1(M)$ one has
\[
i_{\pi_M^\sharp (\alpha )}\gamma =0,\quad i_{\pi_M^\sharp (\alpha )}\Omega=0.
\]
\end{lem}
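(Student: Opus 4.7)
The plan is to use the splittings established earlier to identify $A^*$ with $TM\oplus\RR$ as vector bundles, then transport the Lie algebroid structure and read off the data $\nabla$ and $\Omega$ from the bracket. Setting $e^*:=\nu^*(1)\in\Gamma(\mathfrak{k}^*)$, any section decomposes uniquely as $\xi=\mu^*(X)+f\,e^*$ with $X=\rho_{A^*}(\xi)$ and $f=\langle\xi,e\rangle$, so $(\rho_{A^*},i^*)$ has inverse $\mu^*\oplus\nu^*$. Since $e^*\in\Ker\rho_{A^*}$, the transported anchor is $(X,a)\mapsto X$. Anchor compatibility of $[\cdot,\cdot]_{A^*}$ forces $[\mu^*(X),\mu^*(Y)]_{A^*}=\mu^*([X,Y])+\Omega(X,Y)\,e^*$ for some $\Omega\in\Omega^2(M)$; similarly $[\mu^*(X),e^*]_{A^*}=\gamma(X)\,e^*\in\Gamma(\mathfrak{k}^*)$ defines $\gamma\in\Omega^1(M)$, and $[e^*,e^*]_{A^*}=0$. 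Tensoriality of $\Omega$ and $\gamma$ follows from the Leibniz rule after subtracting the connection terms, and one final application of Leibniz yields the bracket \eqref{bracketA*} with $\nabla_X a=X(a)+a\gamma(X)$.

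To identify $\Omega$ and $\gamma$ via pairings with $d_* e$, apply the Chevalley--Eilenberg formula for the $A^*$-differential to $e\in\Gamma(A)$:
\[
\langle d_* e,\xi\wedge\eta\rangle=\rho_{A^*}(\xi)\langle e,\eta\rangle-\rho_{A^*}(\eta)\langle e,\xi\rangle-\langle e,[\xi,\eta]_{A^*}\rangle.
\]
Using $\langle e,e^*\rangle=1$ and $\langle e,\mu^*(X)\rangle=\mu(e)(X)=0$ (the latter because $i_E\omega=0$ gives $\mu(e)=0$), the pairings on $\mu^*(X)\wedge\mu^*(Y)$ and $\mu^*(X)\wedge e^*$ recover $\Omega(X,Y)$ and $\gamma(X)$, respectively, up to the sign dictated by the convention $\der{d_* X}=-[\der{X},\pi_\G]$.

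For the vanishing identities, the key input is centrality of $e$ combined with the Lie bialgebroid compatibility $d_*[e,Y]_A=[e,d_* Y]_A-[Y,d_* e]_A$. The left-hand side vanishes since $[e,Y]_A=0$, and $[e,d_* Y]_A=0$ because $e$ is central for the Schouten extension on $\Gamma(\wedge^\bullet A)$; hence $[Y,d_* e]_A=0$ for every $Y\in\Gamma(A)$. Decomposing $d_* e\in\Gamma(\wedge^2 A)$ along the algebra splitting $A\cong T^*M\oplus\RR\cdot e$ into a $\wedge^2 T^*M$-part (corresponding to $\Omega$) and a $T^*M\wedge e$-part (corresponding to $\gamma$), and specializing to $Y=\rho_{A^*}^*(\al)$ for $\al\in\Omega^1(M)$, the Lie algebroid morphism property of $\rho_{A^*}^*\colon T^*M\to A$ yields $[\al,\Omega]_{\pi_M}=0$ and $[\al,\gamma]_{\pi_M}=0$ in the cotangent algebroid of $(M,\pi_M)$. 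A short Cartan computation combining $[\al,\gamma]_{\pi_M}=0$ with $d\gamma=0$ (from the Jacobi identity for $A^*$) yields $\Lie_{\pi_M^\sharp(\gamma)}\al=0$ for every $\al$, which forces $\pi_M^\sharp(\gamma)=0$; hence $i_{\pi_M^\sharp(\al)}\gamma=-i_{\pi_M^\sharp(\gamma)}\al=0$. An analogous but more delicate manipulation of $[\al,\Omega]_{\pi_M}=0$, now using $d\Omega=-\gamma\wedge\Omega$ (also from Jacobi) together with the vanishing just derived for $\gamma$, yields $i_{\pi_M^\sharp(\al)}\Omega=0$. The main obstacle is precisely this last step: $i_{\pi_M^\sharp(\al)}\Omega$ does not appear directly inside $[\al,\Omega]_{\pi_M}$ and has to be extracted by testing against judiciously chosen 1-forms such as $\al=f\,dg$, together with careful bookkeeping of the sign conventions that propagate through the identifications above.
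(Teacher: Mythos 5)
Your argument is correct, and its first two thirds coincide with the paper's proof: the construction of the isomorphism $(\rho_{A^*},i^*)$, the reading-off of $\gamma$ and $\Omega$ from the bracket components of $A^*$, and their identification as contractions of $\d_* e$ via the Chevalley--Eilenberg formula are exactly the paper's steps. Where you genuinely diverge is in the two vanishing identities. The paper invokes \cite[Cor.~3.9]{MX94}, which computes the \emph{contraction} $i_{\rho_A^*(\alpha)}\d_* e$ directly as $[e,\rho_{A^*}^*(\alpha)]_A$ plus terms carrying a factor of $\rho_A(e)$; centrality of $e$ and $\rho_A(e)=0$ kill all of them, so $i_{\pi_M^\sharp(\alpha)}\d_* e=0$ and both identities drop out at once by pairing with $e^*$ and with $Y$. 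You instead derive the \emph{Schouten-bracket} identity $[Y,\d_* e]_A=0$ from the bialgebroid compatibility plus centrality, push it down to $[\alpha,\gamma]_{\pi_M}=0$ and $[\alpha,\Omega]_{\pi_M}=0$, and must then convert bracket-vanishing into contraction-vanishing. That conversion, which you flag as the main obstacle and only sketch, does in fact close: for $\gamma$, using $\d\gamma=0$ one finds $\Lie_{\pi_M^\sharp(\alpha)}\gamma=\d(\pi_M(\alpha,\gamma))$, so $[\alpha,\gamma]_{\pi_M}=0$ reduces to $\Lie_{\pi_M^\sharp(\gamma)}\alpha=0$ for all $\alpha$, whence $\pi_M^\sharp(\gamma)=0$; for $\Omega$, writing $X_f:=\pi_M^\sharp(\d f)$, the case $\alpha=\d f$ gives $[\d f,\Omega]_{\pi_M}=\Lie_{X_f}\Omega=0$, and the case $\alpha=g\,\d f$ then gives $i_{X_g}\Omega\wedge\d f=0$ for all $f,g$, which forces $i_{X_g}\Omega=0$ when $\dim M\ge 2$ (and $\Omega=0$ trivially otherwise). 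So your route is sound but strictly longer; the Mackenzie--Xu identity is precisely the device that lets one work with contractions of $\d_* e$ rather than with brackets against it, bypassing the extraction step entirely. If you keep your route, write out the $\Omega$ extraction in full, since as stated it is the one step left incomplete.
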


\begin{rem}
Note that Jacobi identity for a Lie bracket of the form \eqref{bracketA*} is equivalent to the connection $\nabla$ being flat, i.e., to $\gamma$ being a closed 1-form, and the 2-form $\Omega$ being $\d^\nabla$-closed
\[ \d\gamma=0,\qquad \d^\nabla\Omega=0. \]
One can also expressed $\nabla$ and $\Omega$ in terms of the 2nd components of the IM forms associated with $(\omega,\al)$ as follows
\[ \gamma=\tilde{\nu}(e), \quad \Omega=\tilde{\mu}(e). \]
Hence, $A^*$ becomes the trivial extension of $TM$ precisely when $(\omega,\al)$ is cosymplectic, in agreement with Proposition \ref{prop:Lie:bialgebroid:cosymp}. 
\end{rem}

\begin{proof}[Proof of the Lemma  \ref{lem:gamma:omega}]
Under the identification $(\rho_{A^*},i^*):A^*\diffto TM\oplus \RR$, one has $\nabla _{X}e^*=[X,e^*]_{A^*}$ and $\Omega (X,Y)e^*=[X,Y]_{A^*}$. Hence, using the definition of $\d_*$, we find
\begin{align*}
\gamma (X)&= \langle e, [X,e^*]_{A^*}\rangle \\
&= \langle \d _*e, X\wedge e^* \rangle + X (\langle e,e^*\rangle )-
\rho _{A^*} (e^*) (\langle e,X\rangle )= \langle \d _* e,X\wedge e^* \rangle .\\
\Omega (X,Y)&= \langle e^*, [X,Y]_{A^*}\rangle\\ 
&= \langle \d _*e, X\wedge Y \rangle + X (\langle e,Y \rangle )-
Y (\langle e,X\rangle )= \langle \d _* e,X\wedge Y\rangle .
\end{align*}
On the other hand, by \cite[Cor. 3.9]{MX94}, we have
\[
{i_{\rho _A^*(\alpha )} \d_*e =[e, {\rho _{A^*}^*(\alpha )} ]_{A}  -  \d _* (\langle \alpha ,\rho _A (e)\rangle )- \rho ^*_{A^*} (i_{\rho _A (e)} \d\alpha ).}
\]
Observing that  $\rho^*_A (\alpha )=-\pi_M^\sharp (\alpha )$ and $\rho _A (e)=0$, the result follows.
\end{proof}

Next, we will see that $(A^*,A)$ is a triangular Lie bialgebroid in the sense of Mackenzie and Xu \cite{MX94}, i.e., there exists an element $\Lambda\in\Gamma(\wedge^2 A^*)$ satisfying 
\[ [\Lambda,\Lambda]_{A^*}=0,\] 
such that the anchor and Lie bracket on $A=(A^*)^*$ are given by
\begin{align}
\rho_A(\xi)&=\Lambda^\sharp(\xi),\label{anchorL}\\
[\xi_1,\xi_2]_A&=[\xi_1,\xi_2]_\Lambda:= \Lie _{\Lambda ^\sharp (\xi _1)}\xi _2-\Lie _{\Lambda ^\sharp (\xi _2)}\xi _1
-\d_{A} (\Lambda (\xi _1,\xi _2)). \label{bracketL}
\end{align}
In fact, we have the following general result which is an analogue for central extensions of the fact that for any Poisson structure $(M,\pi_M)$ the pair $(TM,T^*M)$ is a triangular Lie bialgebroid. 

\begin{prop}
\label{prop:triangular}
Let $\gamma\in\Omega^1(M)$, $\Omega\in\Omega^2(M)$ and $\pi_M\in\X^2(M)$, and denote by $\nabla$ the connection on the trivial line bundle given by $\gamma$. Assume that:
\begin{enumerate}[(i)]
\item $\gamma$ is closed: $\d\gamma=0$;
\item $\Omega$ is $\d^\nabla$-closed: $\d^\nabla\Omega=0$;
\item $\pi_M$ is Poisson: $[\pi _M,\pi _M]=0$.
\end{enumerate}
Then $A^*=TM\oplus\RR$, with anchor \eqref{anchorA*} and Lie bracket \eqref{bracketA*}, and  $A=T^*M\oplus\RR$ with anchor \eqref{anchorA} and Lie bracket \eqref{bracketA} are both Lie algebroids. If, additionally, one has 
\[
i_{\pi_M^\sharp (\alpha )}\gamma =0,\quad i_{\pi_M^\sharp (\alpha )}\Omega=0,\quad (\alpha \in \Omega^1(M)),
\]
then $(A^*,A)$ is a triangular Lie bialgebroid for the section $\Lambda \in \Gamma (\wedge^2 A^*)$ given by
\[
\Lambda ((\alpha ,f), (\beta ,g)):= \pi_M (\alpha ,\beta ).
\]
In particular, in this case one has
\[ [\Lambda,\Lambda]_{A^*}=0. \]
\end{prop}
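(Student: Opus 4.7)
The plan is to handle the three assertions in sequence. Items (i)–(iii) are precisely what is needed to make $A$ and $A^*$ Lie algebroids individually, while the contractive conditions on $\gamma$ and $\Omega$ enter only in the third step to promote the pair $(A^*,A)$ to a triangular Lie bialgebroid.

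\emph{Step 1.} To see that $A=T^*M\oplus\RR$ with bracket \eqref{bracketA} is a Lie algebroid, observe that restricted to the first summand it is the cotangent algebroid of $(M,\pi_M)$, while the second summand carries the derivation action $\alpha\mapsto\pi_M^\sharp(\alpha)$. Leibniz is immediate, and Jacobi unwinds into Jacobi for $[\cdot,\cdot]_{\pi_M}$ together with the compatibility $[\pi_M^\sharp(\alpha),\pi_M^\sharp(\beta)]=\pi_M^\sharp([\alpha,\beta]_{\pi_M})$, both of which are consequences of $[\pi_M,\pi_M]=0$.

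\emph{Step 2.} For $A^*=TM\oplus\RR$ with bracket \eqref{bracketA*}, the remark following Lemma \ref{lem:gamma:omega} already records that Jacobi holds precisely when $\nabla$ is flat (equivalently $\d\gamma=0$) and $\d^\nabla\Omega=0$. Both are hypotheses (i) and (ii).

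\emph{Step 3.} For the triangular bialgebroid structure, a direct evaluation of $\Lambda$ gives $\Lambda^\sharp:A\to A^*$, $(\alpha,f)\mapsto(\pi_M^\sharp(\alpha),0)$, so that $\rho_{A^*}\circ\Lambda^\sharp$ recovers \eqref{anchorA} as demanded by \eqref{anchorL}. The crux is to verify
\[ [\Lambda,\Lambda]_{A^*}=0\in\Gamma(\wedge^3 A^*). \]
Expanding the Schouten bracket on $A^*$ via the explicit twisted formula \eqref{bracketA*}, the result splits into a pure $\wedge^3 TM$ component and a $\wedge^2 TM\otimes\RR$ component. The first coincides with the classical Schouten bracket $[\pi_M,\pi_M]$ on $M$, which vanishes by (iii). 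The second collects all the correction terms produced by the twist $\nabla$ and the 2-cocycle $\Omega$; every such term is a contraction of $\pi_M$ against either $\gamma$ or $\Omega$ along an argument of the form $\pi_M^\sharp(\alpha)$, and so is killed by the extra assumptions $i_{\pi_M^\sharp(\alpha)}\gamma=0$ and $i_{\pi_M^\sharp(\alpha)}\Omega=0$. Once $[\Lambda,\Lambda]_{A^*}=0$ is established, the Mackenzie–Xu framework \cite{MX94} automatically endows $A$ with the induced Lie algebroid structure \eqref{anchorL}–\eqref{bracketL}, and a direct comparison shows it coincides with the prescribed \eqref{anchorA}–\eqref{bracketA}, so $(A^*,A)$ is triangular.

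The main obstacle is the Schouten computation in Step 3: one must carefully separate the $TM$- and $\RR$-valued contributions in the twisted bracket of $A^*$ and confirm that the compatibility hypotheses are \emph{exactly} what is needed to annihilate every correction produced by $\gamma$ and $\Omega$, leaving behind only the classical bracket $[\pi_M,\pi_M]$. Everything else is bookkeeping.
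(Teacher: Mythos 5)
Your proposal is correct, but the division of labor differs from the paper's. Steps 1 and 2 match the paper exactly (the paper simply declares the Lie algebroid claims "standard"). For the triangularity, however, the paper proceeds in the opposite order: it first computes $\Lie_{\Lambda^\sharp(\alpha,f)}(\beta,g)$ explicitly — this is where the hypotheses $i_{\pi_M^\sharp(\alpha)}\gamma=0$ and $i_{\pi_M^\sharp(\alpha)}\Omega=0$ do their work — and concludes that the bracket $[\cdot,\cdot]_\Lambda$ induced by $\Lambda$ via \eqref{bracketL} coincides with the prescribed bracket \eqref{bracketA}; it then obtains $[\Lambda,\Lambda]_{A^*}=0$ essentially for free from the identity $[\Lambda,\Lambda]_{A^*}^\sharp(\xi_1,\xi_2)=\Lambda^\sharp([\xi_1,\xi_2]_\Lambda)-[\Lambda^\sharp\xi_1,\Lambda^\sharp\xi_2]$ (Lemma 2.2 of Liu--Xu), which reduces everything to $[\pi_M,\pi_M]^\sharp=0$. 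You instead make the direct Schouten expansion of $[\Lambda,\Lambda]_{A^*}$ the crux and defer the bracket comparison to "a direct comparison". Both routes close: in your Schouten computation the only corrections are of the form $\Omega(\pi_M^\sharp\alpha,\pi_M^\sharp\beta)$ (the $\gamma$-terms drop out because $\Lambda^\sharp$ lands in $TM\oplus 0$), and these vanish by hypothesis. The one caveat is that the "direct comparison" you dismiss as bookkeeping is exactly the computation where the paper invests its effort and where the contraction hypotheses are genuinely used a second time; it is routine but not free, and note that the definition of triangular bialgebroid requires it regardless of which order you choose. The paper's route buys you $[\Lambda,\Lambda]_{A^*}=0$ without ever expanding a Schouten bracket on the twisted algebroid $A^*$; yours is more self-contained but front-loads that expansion.
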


\begin{proof}[Proof of Proposition \ref{prop:triangular}]
The fact that both $A$ and $A^*$ are Lie algebroids is standard. To check that under the additional assumptions on $\gamma$ and $\Omega$ the pair $(A^*,A)$ is a triangular bialgebroid, notice that
\[
\rho _A (\alpha ,f )=\pi _M^\sharp (\alpha)=\Lambda ^\sharp (\alpha , f), \qquad ((\alpha , f)\in \Gamma (A)),
\]
so \eqref{anchorL} holds. On the other hand, we find
\begin{align*}
\langle \Lie _{\Lambda ^\sharp (\alpha , f)} (\beta ,g ), (Y,b) \rangle &= \langle \Lie _{( \pi _M^\sharp (\alpha ),0)} (\beta ,g ), (Y,b) \rangle \\
&= \pi_M^\sharp (\alpha ) (\langle (\beta ,g), (Y,b)\rangle  ) - \langle (\beta ,g), [\pi_M^\sharp (\alpha ,0), (Y,b)]_A\rangle \\
&= \pi_M^\sharp (\alpha ) (\langle \beta , Y\rangle )+\pi_M^\sharp (\alpha ) (gb) +\\
&\qquad \qquad-\langle \beta , [\pi_M^\sharp (\alpha ) , Y] \rangle
-g \nabla _{\sharp (\alpha)}b-g\Omega (\pi_M^\sharp (\alpha),Y) \\
&= \langle \Lie _{\pi_M^\sharp (\alpha )}\beta -g\, i_{\pi_M^\sharp (\alpha )}\Omega , Y\rangle +
b \pi_M^\sharp (\alpha ) (g) - gb ( i_{\pi_M^\sharp (\alpha )}\gamma )\\
&= \langle \Lie _{\pi_M^\sharp (\alpha )}\beta  , Y\rangle +
b \pi_M^\sharp (\alpha ) (g) \rangle,
\end{align*}
where in the last line we have used the extra assumptions on on $\gamma$ and $\Omega$. Using this we find that the Lie bracket on $A$ is indeed given by  \eqref{bracketL}, namely
\begin{align*}
[(\alpha , f),(\beta ,g)]_\Lambda&=
\Lie _{\Lambda ^\sharp (\alpha , f)} (\beta ,g )-
\Lie _{\Lambda ^\sharp (\beta ,g)} (\alpha, f )-
\d _A (\Lambda ((\alpha , f) (\beta ,g ))) \\
&= (\Lie _{\pi_M^\sharp (\alpha )}\beta  , \pi_M^\sharp (\alpha )(g) ) - (\Lie _{\pi_M^\sharp (\beta )}\alpha  , \pi_M^\sharp (\beta )(f) )-(\d (\pi_M (\alpha ,\beta )),0) \\
&= ([\alpha ,\beta ]_{\pi _M} ,  \pi_M^\sharp (\alpha )(g)-\pi_M^\sharp (\beta )(f))\\
&= [(\alpha ,f),(\beta ,g)]_A.
\end{align*}

To complete the proof we show that $[\Lambda,\Lambda]_{A^*}=0$. For this we observe that by the computation above we have
\begin{align*}
[\Lambda,\Lambda]_{A^*}^\sharp((\alpha , f),(\beta ,g))&=\Lambda ^\sharp \big([(\alpha , f),(\beta ,g)]_\Lambda\big)-[\Lambda ^\sharp(\alpha , f),\Lambda ^\sharp(\beta ,g)]\\
&=\Lambda ^\sharp \big([(\alpha , f),(\beta ,g)]_A\big)-[\Lambda ^\sharp(\alpha , f),\Lambda ^\sharp(\beta ,g)]\\
&=\pi^\sharp_M([\alpha ,\beta ]_{\pi _M})-[\pi^\sharp_M(\alpha),\pi^\sharp_M(\beta)]\\
&=[\pi _M,\pi _M]^\sharp(\al,\be)=0,
\end{align*}
where the first identity is Lemma 2.2 in \cite{LX96}. 
\end{proof}

We can now complete the proof of Theorem \ref{thm:proper:Poisson:grpd}. We perform two consecutive homotopies of Lie bialgebroids as follows:
\begin{enumerate}
\item Starting with the original Poisson groupoid, its Lie bialgebroid is a triangular Lie bialgebroid $(A^*,A)$ as in Proposition \ref{prop:triangular} with associated data $(\gamma,\Omega,\pi _M)$. We can rescale the 2-form $\Omega$, obtaining a family of triangular Lie bialgebroids $(A^*_t,A)$ with data $(\gamma,(1-t)\Omega,\pi _M)$, $t\in [0,1]$ (note that this triple still satisfies for each $t$ all the conditions in the proposition). 
\item The previous homotopy gives at $t=1$ a Lie bialgebroid $(A^*_1,A)$with corresponding triple $(\gamma,\Omega=0,\pi_M)$. Now we can rescale the connection 1-form $\gamma$, obtaining a family of triangular Lie bialgebroids $(A^*_t,A)$ with data $((2-t)\gamma,0,\pi_M)$, $t\in [1,2]$ (notice again that this triple still satisfies for each $t$ all the conditions in the proposition).
\end{enumerate}
The result of these two consecutive deformations is a Lie bialgebroid $(A_2^*,A)$ whose associated triple has both $\gamma$ and $\Omega$ equal to zero, i..e, it is of cosymplectic type (cf.~Proposition \ref{prop:Lie:bialgebroid:cosymp}).

Finally, we observe that in these deformations $(A^*_t,A)$ the Lie bracket of $A$ and the anchors of both Lie algebroids are fixed, and so is the underling Poisson structure. Using the Mackenzie-Xu correspondence between Lie bialgebroids and source 1-connected Lie groupoids \cite{MX00}, we conclude that the Lie groupoid $\widetilde{\G}$ is fixed (since $A$ is fixed) and we have a path of multiplicative Poisson structures $\widetilde{\pi}_\G^{\,t}\in\X^2(\widetilde{\G})$ as in the statement of Theorem \ref{thm:proper:Poisson:grpd}.
\qed

\begin{rem}
A geometric way of thinking about the two deformations in the proof is as follows. We start with a Poisson groupoid $(\G,\pi_\G)$ which can be described by a pair $(\omega,\al)$ consisting of a multiplicative 2-form and a multiplicative 1-form, which fail to be closed but, nonetheless, $\ker\al$ is an integrable distribuition and the restriction of $\omega$ to the leaves of $\ker\al$ is symplectic. After replacing $\G$ by $\widetilde{\G}$, we are able to construct homotopies as follows:
\begin{enumerate}
\item the first homotopy consists of a deformation $(\omega_t,\al)$ where the 1-form $\al$ is fixed, the 2-form $\omega_t$ is multiplicative, at $t=0$ equals $\omega$ and at $t=1$ is closed;
\item the second homotopy consists of a deformation $(\omega_1,\al_t)$ where the 2-form $\omega_1$ is fixed, the 1-form $\al_t$ is multiplicative, at $t=0$ equals $\al$ and at $t=1$ is closed;
 \end{enumerate}
Moreover, through out these deformations the 1-form always defines an integral distribution and the restriction of the 2-form to its leaves is symplectic, so they define a multiplicative Poisson structure $\pi_t$ on $\G$.
\end{rem}

}

\subsection{Proper over-symplectic groupoids of corank 1}               %
\label{sec:oversymplectic}

{ 
Consider an oversymplectic groupoid $(\G,\omega)$ of corank 1 for which $\ker\omega$ is a simple foliation. Then we obtain an extension 
\[ \xymatrix{1\ar[r]& \K \ar[r] & (\G,\omega)\ar[r]^q & (\Sigma,\Omega_\Sigma) \ar[r] &1} \]
where $(\Sigma,\Omega_\Sigma)$ is a symplectic groupoid and $\omega=q^*\Omega_\Sigma$. If $\G$ is proper and orientable, then $\K$ is the trivial $\Ss^1$-bundle of groups and we have a $\Ss^1$-central extension 
\[ \xymatrix{1\ar[r]& M\times\Ss^1\ar[r] & \G\ar[r]^q & \Sigma \ar[r] &1}. \]
Notice that this also makes $\G$ into a $\Ss^1$-principal bundle and we denote the generator of the $\Ss^1$-action by $\partial_\theta\in\X(\G)$. A \emph{multiplicative Ehresmann connection} for such an extension is given by a multiplicative 1-form $\al\in\Omega^1(\G)$ with the property that:
\begin{equation} 
\label{eq:basic:connection}
i_{\partial_\theta}\al=1.
\end{equation}
We refer the reader to \cite{FM22,LSX09} for the theory of such connections and its relation to ordinary principal bundle connections. 

It is proved in \cite{FM22} that a $\Ss^1$-central extension of a proper groupoid always admits a multiplicative  Ehresmann connection $\al$. Its curvature 2-form is the multiplicative 2-form
\[ \Omega:=\d\al\in\Omega^2(\G). \]
This form is closed and so by \eqref{eq:basic:connection} it is basic. Hence, we have a multiplicative, closed, 2-form
$\underline{\Omega}\in\Omega^2(\Sigma)$ such that:
\[ \Omega=q^*\underline{\Omega}. \] 
Denoting by $H^\bullet_M(\Sigma)$ the \emph{multiplicative de Rham cohomology} of $\Sigma\tto M$, we have:

\begin{prop}
Given a $\Ss^1$-central extension of a proper Lie groupoid $\Sigma$
\[ \xymatrix{1\ar[r]& M\times\Ss^1\ar[r] & \G\ar[r]^q & \Sigma \ar[r] &1}, \]
the class of the basic curvature of a multiplicative  Ehresmann connection
\[ [\underline{\Omega}]\in H^2_M(\Sigma) \]
is independent of the choice of connection.
\end{prop}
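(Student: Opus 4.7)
The plan is to show directly that any two choices of multiplicative Ehresmann connection yield basic curvatures differing by the exterior derivative of a multiplicative 1-form on $\Sigma$.

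Let $\al_0,\al_1\in\Omega^1(\G)$ be two multiplicative Ehresmann connections, so both are multiplicative and satisfy $i_{\partial_\theta}\al_i=1$. The first observation is that their difference $\beta:=\al_1-\al_0$ is a multiplicative 1-form on $\G$ satisfying $i_{\partial_\theta}\beta=0$. Since $\partial_\theta$ is the infinitesimal generator of the free $\Ss^1$-action whose orbits are the fibers of $q:\G\to\Sigma$, and since $\beta$ is invariant under this $\Ss^1$-action (an invariance that follows, for instance, from the fact that $\partial_\theta$ acts on $\G$ by groupoid automorphisms and $\beta$ is multiplicative, or more directly from $\Lie_{\partial_\theta}\beta=\d i_{\partial_\theta}\beta+i_{\partial_\theta}\d\beta$ together with the fact that $\d\beta=q^*(\underline{\Omega}_1-\underline{\Omega}_0)$ is basic), the form $\beta$ is basic: there exists $\underline{\beta}\in\Omega^1(\Sigma)$ with $\beta=q^*\underline{\beta}$.

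Next I would verify that $\underline{\beta}$ is itself multiplicative. Let $\m_\G,\m_\Sigma$ denote the two multiplications and $\pi_i^\G,\pi_i^\Sigma$ the projections off the spaces of composable arrows. Since $q$ is a groupoid morphism, the induced map $q^{(2)}:\G^{(2)}\to\Sigma^{(2)}$ is a surjective submersion intertwining the multiplications and projections. From multiplicativity of $\beta$ and naturality of pullback one computes
\begin{equation*}
q^{(2)*}\m_\Sigma^*\underline{\beta}=\m_\G^*\beta=(\pi_1^\G)^*\beta+(\pi_2^\G)^*\beta=q^{(2)*}\bigl((\pi_1^\Sigma)^*\underline{\beta}+(\pi_2^\Sigma)^*\underline{\beta}\bigr).
\end{equation*}
Because $q^{(2)}$ is a submersion its pullback is injective on forms, and hence $\m_\Sigma^*\underline{\beta}=(\pi_1^\Sigma)^*\underline{\beta}+(\pi_2^\Sigma)^*\underline{\beta}$, i.e., $\underline{\beta}$ is multiplicative.

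Finally, applying $\d$ to the identity $\al_1-\al_0=q^*\underline{\beta}$ gives
\begin{equation*}
q^*(\underline{\Omega}_1-\underline{\Omega}_0)=\d\al_1-\d\al_0=\d q^*\underline{\beta}=q^*\d\underline{\beta},
\end{equation*}
so, again by injectivity of $q^*$, we obtain $\underline{\Omega}_1-\underline{\Omega}_0=\d\underline{\beta}$ with $\underline{\beta}$ multiplicative. Therefore $[\underline{\Omega}_0]=[\underline{\Omega}_1]$ in $H^2_M(\Sigma)$, proving independence of the choice of connection. The main subtlety, as already singled out above, is the passage from multiplicativity of $\beta$ on $\G$ to multiplicativity of the descended form $\underline{\beta}$ on $\Sigma$; everything else is a direct manipulation once basicness is in hand.
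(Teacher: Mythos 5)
Your proposal is correct and follows essentially the same route as the paper: the paper's proof simply asserts that the difference of two multiplicative Ehresmann connections is a basic multiplicative 1-form $q^*\underline{\beta}$ and concludes that the basic curvatures differ by $\d\underline{\beta}$. You supply the justifications the paper leaves implicit (horizontality plus $\Ss^1$-invariance for basicness, and injectivity of $q^{(2)*}$ for descent of multiplicativity), but the argument is the same.
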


\begin{proof}
If $\al_1$ and $\al_2$ are two multiplicative Ehresmann connections then their difference $\al_1-\al_2$ is a basic multiplicative 1-form, i.e., we have
\[ \al_1-\al_2=q^*\beta,\quad\text{with $\beta\in\Omega^1(\Sigma)$ multiplicative}. \]
It follows that their basic curvature 2-forms differ by an exact multiplicative form:
\[ \underline{\Omega}_1-\underline{\Omega}_2=q^*\d\beta. \]
\end{proof}

We call the class  $ [\underline{\Omega}]\in H^2_M(\Sigma)$ the \emph{multiplicative Chern class} of the extension. This class vanishes if and only if the extension admits a flat multiplicative Ehresmann connection.

\begin{thm}
\label{thm:oversymplectic:grpd}
Let $(\G,\omega)$ be a corank 1, orientable, proper oversymplectic groupoid. If $\ker\omega$ is a simple foliation, then there exists $\al\in\Omega^1(\G)$ such that $(\G,\omega,\al)$ is a cosymplectic groupoid if and only if the corresponding $\Ss^1$-central extension has vanishing multiplicative Chern class.
\end{thm}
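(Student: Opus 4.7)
\medskip
\noindent\textit{Proof plan.}
For the $(\Leftarrow)$ direction, if the multiplicative Chern class vanishes then, by the preceding proposition, the $\Ss^1$-central extension admits a flat multiplicative Ehresmann connection, i.e., a multiplicative 1-form $\al\in\Omega^1(\G)$ with $i_{\partial_\theta}\al=1$ and $\d\al=0$. It remains to check that $(\G,\omega,\al)$ is a cosymplectic groupoid; both forms are closed and multiplicative by hypothesis, so one only has to verify that $\al\wedge\omega^m$ is nowhere zero. Since $\omega=q^*\Omega_\Sigma$ with $\Omega_\Sigma$ symplectic on the $2m$-dimensional $\Sigma$, the form $\omega^m$ has kernel exactly $\langle\partial_\theta\rangle$, and evaluating $\al\wedge\omega^m$ on $\partial_\theta$ together with a horizontal basis yields $\al(\partial_\theta)\cdot\omega^m(\text{horizontal basis})$, which is nowhere vanishing.

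For the $(\Rightarrow)$ direction, suppose $(\G,\omega,\al)$ is cosymplectic, with Reeb field $E$ and associated multiplicative Poisson structure $\pi_\G$. Since $\ker\omega$ is one-dimensional and contains both $E$ and $\partial_\theta$, one has $E=f\partial_\theta$ for a nowhere vanishing $f\in C^\infty(\G)$. Both $E$ (by Proposition \ref{prop:Reeb:right:left}) and $\partial_\theta$ (by centrality of $M\times\Ss^1$ inside $\G$) are bi-invariant, so $f$ is bi-invariant, i.e.\ $f=\s^*\underline{f}=\t^*\underline{f}$ for some $\underline{f}\colon M\to\RR^*$ constant on the orbits of $\G$. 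Because these orbits coincide with the symplectic leaves of $\pi_M$, the function $\underline{f}$ is a Casimir of $(M,\pi_M)$. Now $\s\colon(\G,\pi_\G)\to(M,-\pi_M)$ is a Poisson map, so every symplectic leaf of $\pi_\G$ is mapped by $\s$ into a symplectic leaf of $\pi_M$; pulling back, $f=\s^*\underline{f}$ is constant on each symplectic leaf of $\pi_\G$, hence a Casimir of $\pi_\G$.

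The strategy is now to show that $\al':=f\al$ is a flat multiplicative Ehresmann connection. Multiplicativity follows from the identities $f\circ\m=f\circ\pi_1=f\circ\pi_2$ on $\G^{(2)}$, which are direct consequences of the bi-invariance of $f$; one then has
\[
\m^*(f\al)=(f\circ\m)(\pi_1^*\al+\pi_2^*\al)=(f\circ\pi_1)\pi_1^*\al+(f\circ\pi_2)\pi_2^*\al=\pi_1^*(f\al)+\pi_2^*(f\al).
\]
Closedness reduces to $\d f\wedge\al=0$, which holds because the Casimir condition for a regular corank 1 Poisson structure forces $\d f$ to be proportional to $\al$ (whose kernel is the symplectic foliation). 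Finally, $\al(E)=1$ and $E=f\partial_\theta$ give $\al(\partial_\theta)=1/f$, so $i_{\partial_\theta}\al'=f\cdot(1/f)=1$. Thus $\al'$ is a flat multiplicative Ehresmann connection, its basic curvature vanishes, and by the connection-independence of the multiplicative Chern class established just above, $c(\G)=0$.

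The main obstacle I foresee is the Casimir property of $f$: while $\underline{f}$ is a Casimir of $\pi_M$ as a straightforward consequence of bi-invariance, promoting this to $f$ being a Casimir of $(\G,\pi_\G)$ relies on the anti-Poisson nature of $\s$ together with the leafwise compatibility between $\pi_\G$ and $\pi_M$. Once this is in hand, the verification that $f\al$ remains multiplicative despite $f$ itself not being a multiplicative function in the usual sense, as well as the closedness and the normalization $i_{\partial_\theta}\al'=1$, are direct, and both directions reduce to an unwinding of the definition of the multiplicative Chern class.
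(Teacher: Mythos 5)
Your $(\Leftarrow)$ direction is correct and is exactly the paper's argument. In the $(\Rightarrow)$ direction you rightly notice something the paper's one-line proof glosses over: the cosymplectic form $\al$ satisfies $\al(E)=1$ but not necessarily $i_{\partial_\theta}\al=1$, so some renormalization is needed before one can quote the definition of the multiplicative Chern class. Your rescaling $\al'=f\al$ is the natural move, and your verifications of multiplicativity and of $i_{\partial_\theta}\al'=1$ are fine. The gap is in the closedness of $\al'$. You need $\d f\wedge\al=0$, i.e.\ that $\d f$ annihilates $\ker\al=\im\pi_\G^\sharp$, i.e.\ that $f$ is a Casimir of $\pi_\G$; and you derive this from the assertion that $\s$ maps symplectic leaves of $\pi_\G$ into symplectic leaves of $\pi_M$. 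That assertion is false for Poisson maps in general (consider $(\RR^2,\partial_x\wedge\partial_y)\to(\RR,0)$), and it fails here in an essential way: since $\pi_\G^\sharp$ is a groupoid morphism covering $\rho_{A^*}$, and $\rho_{A^*}$ is \emph{surjective} for a groupoid of type (C1), one has $\d\s\bigl(\im\pi_\G^\sharp|_g\bigr)=T_{\s(g)}M$. So $\s$ restricted to a symplectic leaf of $\G$ is a submersion onto an open subset of $M$, and $\d(\s^*\underline{f})$ vanishes on $\ker\al$ if and only if $\d\underline{f}=0$; being a Casimir of $\pi_M$ is not enough. (Concretely, on the trivial extension $\Sigma\times\RR$ the Casimirs of $\pi_\G$ are the functions of $t$ alone, whereas $\s^*\underline{f}$ is constant on the slices $\Sigma\times\{t\}$ only when $\underline{f}$ is constant.)

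The conclusion can be rescued, because $\underline{f}$ is in fact constant --- but the reason is global and uses the closedness of $\al$ together with the $\Ss^1$-bundle structure, not the Casimir property. For instance: $\Lie_{\partial_\theta}\al=\d\,i_{\partial_\theta}\al+i_{\partial_\theta}\d\al=\d(1/f)$, and averaging over the compact group $\Ss^1$ (whose action preserves $1/f=\s^*(1/\underline{f})$, since $\s$ is constant on the orbits) gives
\[ \d(1/f)=\frac{1}{2\pi}\int_0^{2\pi}(\varphi^\theta_{\partial_\theta})^*\Lie_{\partial_\theta}\al\;\d\theta=\frac{1}{2\pi}\left((\varphi^{2\pi}_{\partial_\theta})^*\al-\al\right)=0. \]
Equivalently, $\int_{\K_x}\al$ is a locally constant function of $x$ because $\al$ is closed and the circles $\K_x$ are mutually homologous, and this integral equals the period of $\partial_\theta$ times $1/\underline{f}(x)$. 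Since $M$ is connected, $\underline{f}$ is a nonzero constant, and then $\al'=\underline{f}\,\al$ is trivially closed and multiplicative with $i_{\partial_\theta}\al'=1$, so the Chern class vanishes. With this substitution your proof is complete; as written, the Casimir step is the one place where the argument does not go through.
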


\begin{proof}
If we can complete $\omega$ to a multiplcative cosymplectic stucture $(\omega,\al)$ then obviously $\al$ is a flat multiplicative Ehresmann connection.

For the reverse direction, assume that the multiplicative Chern class vanishes so the extension admits a multiplicative Ehresmann connection $\al$. Let $\dim \G=2n+1$, where $2n=\dim\Sigma$. From \eqref{eq:basic:connection} and the fact that $\omega=q^*\Omega_\Sigma$, with $\Omega_\Sigma$ non-degenerate, it follows that $\al\wedge\omega^n$ is nowhere vanishing. Hence, $(\omega,\al)$ is a multiplicative cosymplectic structure.
\end{proof}
}



\end{document}